\newcommand{\real}{\mathbb{R}}
\newcommand{\nats}{\mathbb{N}}
\newcommand{\pars}[2]{\frac{\partial #1}{\partial #2}}
\newcommand{\lop}{\mathcal{L}_r}
\newcommand{\qop}{\mathcal{Q}_r}
\newcommand{\lops}{\mathcal{L}_\sigma}
\newcommand{\qops}{\mathcal{Q}_\sigma}
\newcommand{\lopp}{\hat{\mathcal{L}_\rho}}
\newcommand{\qopp}{\mathcal{Q}_\rho}
\newcommand{\fop}{\mathcal{F}_r}
\newcommand{\fops}{\mathcal{F}_\sigma}
\newcommand{\dopin}{\mathcal{D}_{\text{in}}}
\newcommand{\dfop}[2]{d\mathcal{F}_{#1}[#2]}
\newcommand{\fnrm}[1]{[#1]_r}
\newcommand{\fnrmrho}[1]{[#1]_\rho}
\newcommand{\Nb}{\mathcal{N}}
\newcommand{\Nbr}[1]{\mathcal{N}_{#1}}
\newcommand{\bry}{\mathfrak{B}}
\newcommand{\cry}{\mathfrak{C}}
\newcommand{\init}{\text{init}}
\newcommand{\formal}{\text{formal}}
\newcommand{\out}{\text{out}}
\newcommand{\para}{\text{para}}
\newcommand{\inner}{\text{in}}
\newcommand{\col}{\text{col}}
\newcommand{\defeq}{\mathrel{\mathop:}=}
\newcommand{\eqdef}{=\mathrel{\mathop:}}
\newtheorem{theorem}{Theorem}[section]
\newtheorem{lemma}[theorem]{Lemma}
\theoremstyle{remark}
\newtheorem*{remark}{Remark}
\DeclareMathOperator{\Rc}{Rc}
\DeclareMathOperator{\loc}{loc}
\DeclareMathOperator{\Rm}{Rm}
\DeclareMathOperator{\NP}{NP}
\DeclareMathOperator{\SP}{SP}
\title{Ricci Flow Emerging from Rotationally Symmetric Degenerate Neckpinches}
\author{Timothy Carson}
\begin{document}
\maketitle
\begin{abstract}
  In \cite{AIK} the authors created type-II Ricci flow neckpinch singularities.  In this paper we construct solutions to Ricci flow whose initial data is the singular metric resulting from these singularities.  We show in particular that the curvature decreases at the same rate at which it blew up.  This is the first example of Ricci flow starting from a type-II singularity.  
\end{abstract}
\tableofcontents
\section{Introduction}
The study of singularities is central to the study of Ricci flow.  In the absence of a weak formulation of Ricci flow, topological surgery is used to continue the flow through a singularity.  This was the technique proposed by Hamilton and completed by Perelman in dimension 3 to prove Thurston's geometrization conjecture. The flow through the singularity depends on the parameters of the surgery.  Perelman conjectured that, by sending the size of the surgery to zero, the surgically modified flows converge to a flow through the singularity, which we may call \emph{the} Ricci flow \cite{Perelman}.

In \cite{ACK}, Angenent, Caputo, and Knopf considered the singular metric on $S^{n+1}$ resulting from a rotationally symmetric regular neckpinch singularity.  They showed that there is a Ricci flow which emerges from that singular data, and it indeed can be constructed by the limit of small surgeries.  Their work furthermore tells us the precise asymptotics of the metric after the singularity.  In particular, the curvature $|\Rm|$ decreases at a rate slightly faster than the rate at which it blew up.  In a recent preprint \cite{Singular}, Kleiner and Lott have shown that in three dimensions Perelman's conjecture holds, although there is no proof that the resulting limit is independent of subsequence. 

This paper parallels \cite{ACK} for the case of degenerate neckpinches.  This is the first explicit example of Ricci flow through a type-II singularity.  We show that unlike the case of the regular neckpinch, the degenerate neckpinch heals at the same rate as it formed.  The asymptotics of $|\Rm|$ for both of these cases is shown in Table \ref{tab:curvatureasympts}.

\begin{table}[h]
\begin{tabular}{c | c | c}
  Asymptotics of $|\Rm|$ & Before singularity & After singularity \\
  \hline
  Type-I Neckpinch & $ \frac{1}{T-t} $ & $\frac{|\log(t-T)|}{t-T}$ \\
  Type-II Neckpinch & $ \frac{1}{(T-t)^{2-2/k}}  $ & $ \frac{1}{(t-T)^{2-2/k}}  $
\end{tabular}
\caption{The asymptotics of curvature before and after examples of neckpinches.  In the case of the degenerate neckpinch, $k \geq 3$ is an odd integer.}
\label{tab:curvatureasympts}
\end{table}

Recall that a finite time singularity of Ricci flow on $(0, T) \times M^{n}$ is called Type-I if $|\Rm| \sim (T-t)^{-1}$, and Type-II if the curvature blows up faster.  It appears to be the case that Type-II singularities only arise as degenerate cases.  In \cite{AIK}, Angenent, Isenberg, and Knopf constructed ``degenerate neckpinch'' metrics on $S^{n+1}$, $n \geq 2$, which develop a type-II singularity in finite time.  For any integer $k \geq 3$, there is a degenerate neckpinch singularity for which the curvature blows up as $|\Rm| \sim (T-t)^{2-2/k}$.  For $k$ even, the metric goes to $0$ on the entire manifold $S^{n+1}$.  On the other hand, for $k$ odd, only a set of partial $g(0)-$measure is destroyed.   We prove the existence of, and find asymptotics for, solutions to Ricci flow which emerge from the final metrics corresponding to odd $k$.  This paper fills in the lower-right hand corner of Table \ref{tab:curvatureasympts}.

Type-II singularities with other asymptotic blow-up rates have been observed.  In \cite{Needles}, Wu constructed non-compact examples which blow up at rate $(T-t)^{-(\lambda + 1)}$ for any $\lambda \geq 1$.

\subsection{Main Theorem}
\begin{figure}
    \centering
    \includegraphics[scale=1]{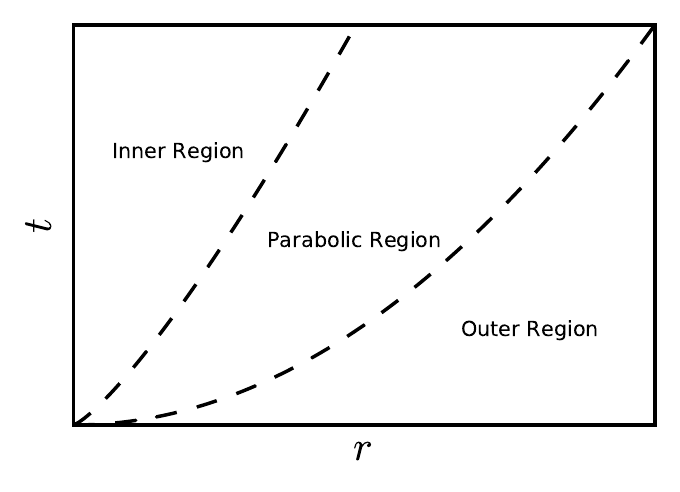}
    \caption{The three regions mentioned in the theorem.  The parabolic and inner regions shrink to nothing as $t \searrow 0$.}
\end{figure}
\begin{theorem}
  \label{maintheorem}
  Let $g_{\init}$ be a metric on $S^{n+1}-\{\NP\}$ satisfying the conditions in Section \ref{sec:initdata}.  Here $\NP$ is a point (the ``North Pole'') in $S^{n+1}$.  (In particular,  $g_{\init}$ could be the metric arising in the limit $t \nearrow T$ from the degenerate neckpinch for odd $k$, constructed in \cite{AIK}, with $b = 1-2/k$ below.)  There exists a smooth Ricci flow $g(t)$ on $S^{n+1} \times [0, t_*]$ emerging from $g_{\init}$.  Any such Ricci flow must be compact.  The flow may be constructed as the limit as $k \to \infty$ of $g_{\omega_k}(t)$, where each $g_{\omega_k}(t)$ is a solution to Ricci flow, and $g_{\omega_k}(0)$ is a modification of $g_{\init}$ in a small neighboorhood of $\NP$.

  In a neighboorhood of the North Pole the metric can be written as
  $$g(t) = \frac{(dr)^2}{v(r, t)} + r^2 g_{S^n}$$
  where $r$ is a coordinate in the neighboorhood, $v(r, t) \in [0,1]$, and $g_{S^n}$ is the round metric on $S^n$ with unit radius.

  The curvature tensor of the solution has norm $|\Rm| \sim  t^{-(1+b)} $ as $t \searrow 0$, and the solution $g(t)$ satisfies the following asymptotic profile.
  \begin{itemize}
  \item \textbf{Outer Region:} For $\rho_* \sqrt{t} < r < r_*$,
    $$v(r, t) = [1 + o(1)]\left(1 + 2(n-1)(1+b) \frac{t}{r^2} \right) r^{2b}.$$
  \item \textbf{Parabolic Region:} For $\sigma_* \sqrt{t^{1+b}} < r < \rho_* \sqrt{t}$,
    $$v(r, t) = [1 + o(1)]\left(1 + 2(n-1)\frac{t}{r^2}\right)^{1+b} r^{2b}.$$
  \item \textbf{Inner Region:} For $r < \sigma_* \sqrt{t^{1+b}}$,
    $$
    v(r, t) = [1 + o(1)]\bry
    \left(
      (2(n-1))^{-(b+1)/2}\frac{r}{\sqrt{t^{1+b}}}
    \right),
    $$
	where $\bry$ is such that the Bryant soliton is 
	$$g_{\text{Bryant}} = \frac{(dr)^2}{\bry (r)} + r^2 g_{S^n}.$$
  \end{itemize}
\end{theorem}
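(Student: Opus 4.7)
The plan is to construct the Ricci flow $g(t)$ as the limit of regularized flows $g_\omega(t)$, where $g_\omega(0)$ is obtained from $g_{\init}$ by cutting off near the North Pole at scale $\omega$ and capping smoothly with a small Bryant-soliton-like piece. Since $g_\omega(0)$ is a smooth metric on $S^{n+1}$, Hamilton's short-time existence yields a classical Ricci flow $g_\omega(t)$ on some interval $[0,\tau_\omega)$. The entire task then reduces to (i) proving uniform a priori estimates that force $\tau_\omega \geq t_*$ with bounds independent of $\omega$, and (ii) extracting a limit as $\omega \to 0$. In the rotationally symmetric gauge, the Ricci flow becomes a scalar PDE for $v(r,t)$ of the form
\[
v_t = v\, v_{rr} - \tfrac{1}{2} v_r^2 + \text{lower order},
\]
to which the parabolic maximum principle applies. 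This is the main workhorse.

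The heart of the argument is the construction of upper and lower barriers matching the claimed asymptotic profiles in each of the three regions. I would first derive formal profiles by a matched asymptotic expansion. In the outer region the natural ansatz $v \approx r^{2b} + 2(n-1)(1+b)\, t\, r^{2b-2}$ solves the PDE to leading order because $v_{rr}$ is negligible compared to the lower order terms. In the parabolic region, the rescaling $r = \sqrt{t}\, y$ reduces the equation to an autonomous parabolic equation whose relevant self-similar solution yields the $(1 + 2(n-1)t/r^2)^{1+b} r^{2b}$ profile. In the inner region the rescaling $r = \sqrt{t^{1+b}}\,\xi$ produces, in the limit, the steady Ricci soliton equation whose unique smooth rotationally symmetric solution is the Bryant soliton $\bry$. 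I would then perturb each formal profile by a multiplicative factor $1 \pm \epsilon$ and a small error term to produce strict sub- and super-solutions of the PDE on their respective regions, and verify that consecutive barriers overlap compatibly at $r \sim \rho_* \sqrt{t}$ and $r \sim \sigma_* \sqrt{t^{1+b}}$. Gluing them gives a global barrier $v_\pm$, and the parabolic maximum principle applied to $v_\omega - v_\pm$ (using the matched choice of initial data for $g_\omega$) yields $\omega$-independent pointwise control for all $t \in (0, t_*]$.

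With these bounds, Shi's local derivative estimates give $C^\infty_{\loc}$ bounds on $g_\omega(t)$ away from $\NP$, and an Arzel\`a-Ascoli diagonal argument extracts a smooth limit $g(t)$ on $(S^{n+1}\setminus\{\NP\}) \times (0, t_*]$. The barrier estimates pass to the limit and, crucially, the inner region analysis shows that $g(t)$ extends smoothly across $\NP$ for every $t > 0$: locally near $\NP$ the metric is asymptotic to a rescaled Bryant soliton, which is smooth at its tip, so $g(t)$ closes up into a smooth metric on the full $S^{n+1}$. Continuity at $t = 0$ in $C^0_{\loc}$ away from $\NP$ follows directly from the outer region profile. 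Compactness and uniqueness of the asymptotic profile for any emerging flow is then essentially automatic: any such flow admits the same barriers (by applying the maximum principle on small time intervals and passing to the limit $t \searrow 0$ using the boundary data $g_{\init}$), hence must close up at $\NP$ in the same way.

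The principal obstacle is the matched asymptotic analysis in the inner region. The Bryant profile $\bry$ is not available in closed form, so constructing sharp sub- and super-solutions requires a careful ODE analysis of both the tip behavior of $\bry(\xi)$ as $\xi \to 0$ and the tail behavior as $\xi \to \infty$, and gluing to the parabolic region's far-field expansion without losing enough precision for the maximum principle to close. A secondary difficulty is ensuring that the approximating flows $g_\omega$ genuinely live up to time $t_*$ uniformly in $\omega$: one must bootstrap the barriers against the short-time existence interval so that the curvature of $g_\omega$ stays bounded on any fixed parabolic neighborhood of the modified cap, rather than only on a shrinking time window.
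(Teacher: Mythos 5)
Your proposal follows essentially the same route as the paper: regularized initial metrics modified near the North Pole, matched asymptotic profiles in outer/parabolic/inner regions with the Bryant soliton at the tip, glued sub- and super-solution barriers controlled by the parabolic maximum principle, curvature bounds, and a diagonal compactness argument to extract the limit. The only steps the paper needs that you treat as ``essentially automatic'' are the compactness of an arbitrary emerging flow (proved via a separate supersolution lemma giving $v \geq r^{2b_*}$ and hence finite arclength to the tip, without imposing boundary data at $r=0$) and the control of the diffeomorphisms arising in Hamilton-type compactness so that the limit actually attains $g_{\init}$ at $t=0$.
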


\subsection{Outline of Techniques}
The techniques we use follow closely those in \cite{ACK}.  All of the metrics we consider are rotationally symmetric.  Our initial metric is smooth except at one point, which we call the North Pole,  $\NP$.  Using the rotational symmetry, solving Ricci flow in a neighboorhood of the North Pole is equivalent to solving the quasilinear parabolic PDE
$$v_t = vv_{rr}-\frac{1}{2}v_r^2 + \frac{n-1-v}{r}v_r + \frac{2(n-1)}{r^2}v(1-v)$$
with boundary data $v(0, t) = 1$.  The initial singular metric corresponds to the initial data $v_{\init}(r) = (1 + o(1))r^{2b}$, where $b = 1-2/k \in (0, 1)$.  The fact that the initial metric is not smooth at the North Pole corresponds to the fact that $v_{\init}(0) \neq 1$.

In Section \ref{sec:coords} we describe the coordinate system we use throughout the paper, and the initial conditions which come out of the neckpinch constructed in \cite{AIK}. In Section \ref{sec:formal} we find a formal solution, and in Section \ref{sec:barriers} we construct barriers based on the formal solution.

In Section \ref{sec:conv} we prove our main theorem.  Lemma \ref{cptnesssubsoln} shows that any solution must be compact.  Results in Section \ref{section:curvaturebnds} show any solution within the barriers constructed in Section \ref{sec:barriers} satisfies curvature bounds. In Section \ref{section:regularizations} we construct modified initial metrics $g_\omega$ for small $\omega > 0$, which lie between the barriers, evaluated at time $\omega$.  Using the curvature bounds, we show a subsequence of the $g_{\omega}$ converge.  We show any solution which lies within our barriers satisfies curvature bounds, which allows us to extract a convergent subsequence of the modified metrics in Section \ref{section:convergencefinal}.

\subsection{Acknowledgements} 

I thank my advisor Dan Knopf for the suggestion of and significant help with the problem.
\section{Coordinates, Equations, and Initial Data}
\label{sec:coords}
\subsection{Basic Coordinates}
\label{sec:basiccoordinates}
In this paper we consider Ricci flow of $SO(n+1)$ invariant metrics on $S^{n+1}$.  For such a metric there are two well-defined poles $\{\NP, \SP\}$; by removing the two poles, we can identify $S^{n+1}$ with $(0, 1)\times S^{n}$, with the metric invariant under the action of $SO(n+1)$ on the $S^n$ factor. The metric can be written as
$$g = \varphi(x)^2(dx)^2 + \psi^2(x)g_{S^n},$$
where $g_{S^n}$ is the standard, round metric on $S^n$.
A coordinate which is more geometrically natural than $x$ is the arclength coordinate
$$s = \int_0^x\varphi(x)dx$$
in which the metric is written as
$$g = (ds)^2 + \psi(x)g_{S^n}.$$
\begin{figure}
  \centering
  \includegraphics[scale=1]{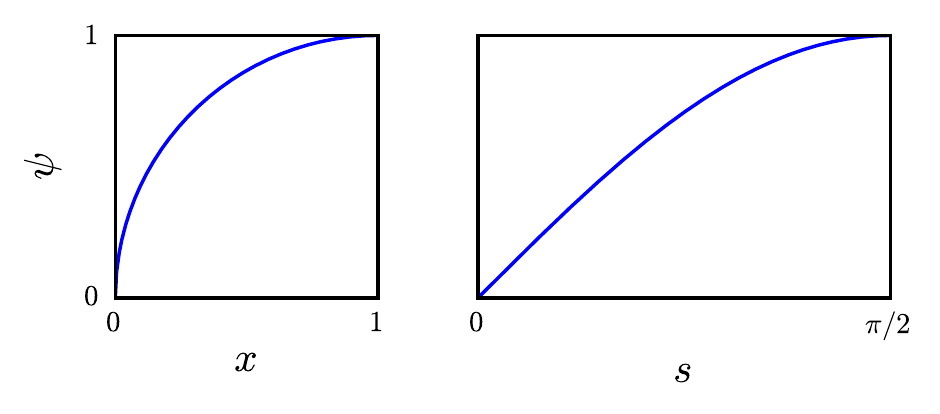}
  \caption{$\psi$ for half of the sphere $S^{n+1}$ with the standarded metric, in both the $\bar x$ and $s$ coordinates.  The point we call $\NP$ is the leftmost point of the sphere.}
\end{figure}

\begin{remark}
  If we want to see the manifold as the rotation of a graph over $\real$, it is necessary that $|\psi_s|\leq 1$.  (Otherwise, the height of the graph would have to increase faster than the arclength.)  In that case, we can define the coordinate
  $$\bar x = \int_0^s\sqrt{1-\psi_s^2}ds$$
  Then rotating the graph of $\psi$ in terms of $\bar x$ in $\real^{n+2}$ yields an embedding of the manifold.  
\end{remark}

There are two distinguished sectional curvatures:
$$K = \frac{-\psi_{ss}}{\psi}$$
which is the curvature of a plane containing $\frac{\partial}{\partial s}$, and
$$L = \frac{1-\psi_s^2}{\psi^2}$$
which is the curvature of a plane tangent to a spherical slice $\{x_0\}\times S^n$.   The Ricci curvature is given by
$$\Rc = nK(ds)^2 + (K + (n-1)L)\psi^2g_{S^n}.$$

In order for the addition of $\NP$ ($x = 0$) and $\SP$ ($x=1$) to yield a complete, compact, smooth manifold, it is necessary that $\psi$ is smooth in terms of $s$, and that 
\begin{equation}
  \psi_s\bigg|_{x=0} = 1, \qquad \psi_s\bigg|_{x=1}=-1
\end{equation}
to ensure that the sectional curvature $L$ does not blow up at the north or South Pole.

\begin{remark}
For smoothness it is also necessary that all even derivatives of $\psi$ (with respect to $s$) vanish at $x=0$ and $x=1$ \cite[Prop.~4.1]{AK}.  Instead of checking this directly we construct our solution as the limit of smooth flows, and we bound the derivatives of $\Rm$ for the smooth flows.
\end{remark}

Let $\Nb_{r_0}$ the connected neighboorhood of $\NP$ where $\psi(x)<r_0$.  As $\psi_s(0) = 1$, we may always choose $r_0$ small enough so that $\psi$ is increasing $\Nb_{r_0}$.  (This will also be true of our singular initial data, even though $\psi_{\init, s}(0) = 0$.)    Therefore we may use $\psi$ to replace $x$ as a coordinate in $\Nb_{r_0}$.  Set
$$r = \psi(x).$$
(So really $r$ and $\psi$ are the same thing, but we want to emphasize when we are using $r$ as a coordinate.)  The key function which describes the geometry in these coordinates is $\psi_s$; the metric can be written as
$$g = \frac{(dr)^2}{\psi_s^2} + r^2g_{S^n}.$$
We set
$$v(r) = \psi_s^2(r)$$
which has simpler evolution equations than $\psi_s$.  The sectional curvatures $K$ and $L$ become
$$K = \frac{-v_r}{2r} \qquad L = \frac{1-v}{r^2},$$
the Ricci curvature is
$$\Rc = \frac{nK}{v}(dr)^2 + (K + (n-1)L)r^2g_{S^n},$$
and the boundary condition necessary for smoothness at $\NP$ is $v(0) = 1$.

\begin{figure}
  \centering
  \includegraphics[scale=1]{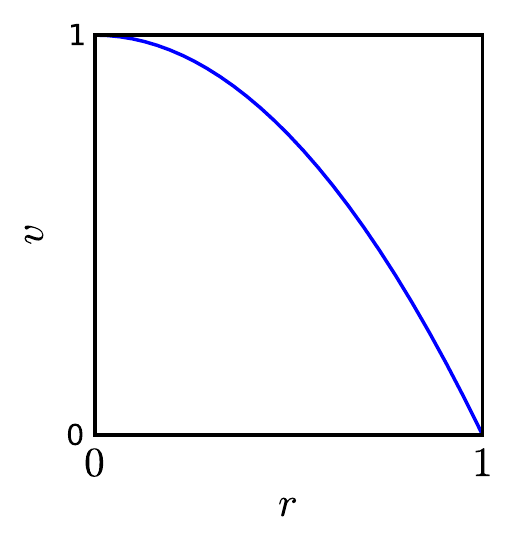}
  \caption{$v = \psi_s^2$ for half of the sphere $S^{n+1}$ with the standard metric.}
\end{figure}

The arclength between $r = r_0$ and $r=r_1$ is given by
$$\int_{r_0}^{r_1}\frac{1}{\sqrt{v}}dr$$
Thus a manifold with a complete, noncompact end, instead of the North Pole, would have $v$ going to zero at $r=0$ so fast that $v^{-1/2}$ is not integrable.

In the $s$ coordinate, Ricci flow reads
\begin{equation}\label{riccievopsi}
  \psi_t = \psi_{ss} - (n-1)\frac{1-\psi_s^2}{\psi}
\end{equation}
In the $r$ coordinate, Ricci flow reads
\begin{equation}\label{riccievo}
  v_t = \fop[v]
\end{equation}
where $\fop$ is the differential operator
$$\fop[v] = vv_{rr}-\frac{1}{2}v_r^2 + \frac{n-1-v}{r}v_r + \frac{2(n-1)}{r^2}v(1-v)$$
For convenience we write $a = 2(n-1)$\label{adef}. (One significance of $a$ is that the cylinder $\psi \equiv \sqrt{a(T-t)}$ is a solution of Ricci flow collapsing at time $T$.)
For the analysis, it's useful to see $\fop$ as
$$\fop[v] = \lop[v] + \qop[v]$$
where $\lop[v]$ and $\qop[v]$ are the linear and quadratic parts of $\fop[v]$, namely
\begin{equation}\label{opdef}
  \lop[v] = \frac{\frac{a}{2}rv_r + av}{r^2}
  \qquad
  \qop[v] =
  \frac{
    v(r^2v_{rr})
    - \frac{1}{2}(rv_r)^2
    - v(rv_r)
    - av^2
  }{r^2}
\end{equation}
We use $\qop[v_1, v_2]$ to mean the symmetric bilinear form such that $\qop[v] = \qop[v,v]$. We set $\fnrm{v}$ to be the pointwise norm
$$\fnrm{v} = |v| + r|v_r| + r^2|v_{rr}|.$$\label{fnrmdef}
The way we've written (\ref{opdef}) makes it clear that
\begin{equation}\label{opbnd}
  |\lop[v]| \leq C\frac{\fnrm{v}}{r^2}
  \qquad
  |\qop[v_1, v_2]| \leq C\frac{\fnrm{v_1}\fnrm{v_2}}{r^2}
\end{equation}
The bounds (\ref{opbnd}) will be the main way we control ``throwaway'' terms.

\subsection{Initial Data}\label{sec:initdata}
In this section we describe properties of the singular initial data $g_{\init}$.  In \cite{AIK}, a family of degenerate neckpinches was proven to exist, corresponding to integers $k \geq 3$.  The results we require are summarized in Theorem \ref{initdata} below.

\begin{theorem}\label{initdata}
  For any dimension $n \geq 2$ and odd integer $k \geq 3$, there exists an initial metric $g$ on $S^{n+1}$ such that
  \begin{itemize}
  \item The metric is rotationally invariant, as described above.  We will write $S^{n+1} = (-1, 1)\times S^n \cup \{\NP, \SP\}$.\footnote{whereas for the rest of this paper, $x$ lies in $(0, 1)$, although this is of small importance.}
  \item The metric $g$ satisfies the bound $|\psi \psi_{ss} - \psi^2_s + 1| < A < \infty$
  \item Under Ricci flow, the metric devolopes a singularity at time $T < \infty$.  On $(-1, 0) \times S^n$ the metric approaches $0$ as $t \nearrow T$.
  \item On $S^{n+1}-\{NP\}$ the metric stays smooth, and has a smooth limit as $t \nearrow T$.   For some $r_0$, the radius is increasing in the neighboorhood $\Nb_{r_0}$ of the north pole, so we can use $r = \psi$ as a coordinate.  Then $v(r, T) = \psi_s(x(r), T)^2$ satisfies
    $$v(r, T) = r^{2b} + O(r^{4b}), \qquad r \searrow 0$$
    where $b = 1-2/k$.
  \end{itemize}
\end{theorem}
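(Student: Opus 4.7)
The plan is to extract the first two bullets directly from the main existence result of \cite{AIK}, which constructs, for each odd integer $k \geq 3$, rotationally symmetric degenerate-neckpinch Ricci flows on $S^{n+1}$ that collapse the hemisphere $(-1,0) \times S^n$ at some finite time $T$. The rotational invariance and the finite-time collapse are immediate consequences of their construction, so the real work here is translating the asymptotic profile of $\psi$ in the arclength coordinate $s$ into statements about $v = \psi_s^2$ in the geometric coordinate $r = \psi$ at the singular time.

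For the smoothness on $S^{n+1} - \{\NP\}$ and the existence of a smooth limit as $t \nearrow T$, I would invoke the fact, proved in \cite{AIK}, that the curvature blow-up is localized at the degenerate pole. Shi derivative estimates then give uniform local regularity away from $\NP$ and yield a smooth limit on compact subsets. The usability of $r = \psi$ as a coordinate in some neighborhood $\Nb_{r_0}$ at time $T$ follows from monotonicity of $\psi$ in $s$ on the ``healthy'' hemisphere, which survives in the limit since $\psi_s > 0$ on the smooth region throughout the flow.

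For the expansion $v(r, T) = r^{2b} + O(r^{4b})$, I would start from the $s$-profile shown in \cite{AIK}, which describes $\psi(s, T)$ near the degenerate pole as a leading power of $s$ dictated by the formal self-similar model, with control on the next term of the expansion. Differentiating in $s$, squaring to get $v = \psi_s^2$, and inverting the change of variable $r = \psi(s)$ expresses $v$ as a function of $r$. Matching the normalizations so that the leading coefficient comes out as $1$ is a routine rescaling, and the sharp error exponent $4b$ (rather than a weaker rate) is forced by the order of the next term in the $s$-expansion from \cite{AIK}.

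Finally, the bound $|\psi\psi_{ss} - \psi_s^2 + 1| \leq A$ is equivalent, via the identities $K = -\psi_{ss}/\psi$ and $L = (1 - \psi_s^2)/\psi^2$, to a uniform bound on $\psi^2(L - K)$. Near the degenerate pole, both sectional curvatures individually blow up like $\psi^{-2}$, so this bound encodes a leading-order cancellation between $L$ and $K$ that must be read off from the asymptotic expansions of \cite{AIK}. The main obstacle in this theorem is purely bookkeeping: tracking normalizations between the $s$- and $r$-coordinate descriptions, and verifying that the cancellation $L - K = O(\psi^{-2})$ holds to the required order uniformly down to the pole. All of the underlying analytic work has already been carried out in \cite{AIK}, so no new estimates are needed.
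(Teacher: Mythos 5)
The paper offers no proof of this theorem: it is stated explicitly as a summary of the results of \cite{AIK} that the rest of the paper needs, so your plan of extracting everything from that reference and then translating the $s$-coordinate asymptotics of $\psi$ into statements about $v=\psi_s^2$ in the coordinate $r=\psi$ is exactly the (implicit) approach of the paper. One simplification worth noting: the pinching bound $|\psi\psi_{ss}-\psi_s^2+1| = |\psi^2(L-K)| < A$ is asserted for the smooth initial metric, where it holds automatically by compactness, and its persistence up to the singular time (which is what the paper actually uses later) follows from the monotonicity of $\sup|r^2(K-L)|$ under Ricci flow (Lemma 3.1 of \cite{AK}) rather than from tracking cancellations between $K$ and $L$ in the asymptotic expansions as you propose.
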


Below we describe the properties we require for our initial data, all of which are satisfied by the limits as $t \nearrow T$ of the Ricci flows described in Theorem \ref{initdata}.  The initial data is described in terms of the radius function  $\psi_{\init}(s)$, where $s$ is the arclength from the singular tip $\NP$, and is defined for $s$ in $[0, l]$. 
\begin{enumerate}
\item The metric is defined and smooth on $S^{n+1}- \{ NP \}$.  The metric space completion is $S^{n+1}$. The metric is rotationally symmetric, as described in Section \ref{sec:basiccoordinates}.

\item For some $r_{\#}$, the radius, $\psi(s)$ is increasing on $\Nb_{r_{\#}}$, and the slope function $v_{\init}$ satisfies $v_{\init} = (1 + o(1))r^{2b}$ for some $b \in (0, 1)$.

For the neckpinches constructed in \cite{AIK}, the solution approaches $v(r) = (1 + o(1))r^{2(1-2/k)}$, where $k \geq 3$ is an odd integer.  We never use that $k$ is an integer, and it's cleaner to forget it, setting $b = 1-2/k$.
\label{bdef}
  
\item \label{vbnd} $|\psi_s(s)| < 1$ for $0 < s < l$.
  
  This inequality is preserved under Ricci flow \cite[Prop.~5.1]{AK}.  To understand this, recall
  $$\psi_t = \psi_{ss} - (n-1) (1-\psi_s^2)/\psi.$$
  So if $|\psi_s(s)| < 1$, then wherever $\psi_{ss}\leq 0$ we have $\psi_t \leq 0$.
  
\item \label{abnd} There is an $A > 0$ such that $|(1-v + \frac{r}{2}v_r)| = |r^2 (K-L)| < A$.

  Lemma 3.1 of \cite{AK} shows that the supremum over the manifold of $|r^2(K-L)|$ is nonincreasing.  This bound will be helpful because our control of the evolution is based on sub- and super-solutions for the slope function $v = \psi_s^2$.  Bounds on $v$ let us control the sectional curvatures $L = \frac{1-v}{r^2}$ which are tangent to the spherical cross sections, but not $K = -\frac{v_r}{2r}$. The bound on $(L-K)$ gets us past that problem. (This occurs in Lemma \ref{curvaturebndfixedtime}.)

\end{enumerate}

\section{Formal Matched Asymptotics}
\label{sec:formal}
In this section we construct a formal solution to the PDE $v_t = \fop[v]$.  Our initial metric is
$$v_{\init} = (1 + o_{r \to 0}(1))r^{2b}.$$
So for our formal solution we will have
$$v_{\formal}(r, 0) = v_0(r),$$
where $v_0(r) = r^{2b}$.
\subsection{Outer Region}
The outer region will be a time-dependent region where $r \gg \sqrt{t}$.  The exact definition will be determined when we construct barriers.  As a first approximation to $v_t = \fop[v]$, we consider
\begin{equation} \label{outerapprox}
  v \approx v_0 + t \fop[v_0].
\end{equation}
Calculate
\begin{align*}
  \fop[v_0] &=
  \frac{1}{r^2}
  \left[
    \left(
      \frac{a}{2}2b + a
    \right)r^{2b}
      + \left(2b(2b-1) - \frac{(2b)^2}{2} - 2b - a\right)r^{4b}
  \right]
\\
  &= \frac{1}{r^2}(1 + o(1))(a(1+b)r^{2b}).
\end{align*}
So take $v_{\out}$ to be the approximation
\begin{equation}\label{outerdef}
  v_{\out} = r^{2b}\left(1 + a(1+b)\frac{t}{r^2}\right).
\end{equation}
The approximation (\ref{outerapprox}) only makes sense as long as $v$ does not change very much, but if we look at the definition of $v_{\out}$ in (\ref{outerdef}) this is only true if $t \ll r^2$.  Set $\rho = \frac{r}{\sqrt{t}}$.  We expect $v_{\out}$ to be a good approximation where $\rho \gg 1$, and the outer region will be defined as $\rho \geq \rho_*$ for some $\rho_*$ to be determined.  We will have to find another approximation where $\rho \leq \rho_*$.

\subsection{Parabolic Region}
\label{sec:paraformal}
In the parabolic region, where $\rho \sim 1$, we will use the rescaled coordinates
$$\tau = \log(t), \qquad \rho = \frac{r}{\sqrt{t}} = re^{-\tau/2}.$$
Write the approximation $v_{\out}$ in the parabolic coordinates:
$$v_{\out} = \rho^{2b}\left(1 + a(1+b)\rho^{-2}\right)e^{b\tau}.$$
For $\rho \sim 1$ and fixed $\tau$, $v_{\out}(\rho, \tau)$ is bounded.  We will try to find terms in the asymptotic expansion for $v$ in terms of $e^{b\tau}$:
\begin{equation}\label{paraexpansiontype}
  v \sim \sum_{j = 1}^\infty v^{(j)}(\rho)e^{jb\tau}
\end{equation}

If we write the Ricci evolution equations (\ref{riccievo}) in the parabolic coordinates we get
\begin{equation}\label{riccievopara}
  v_{\tau} = \lopp[v] + \qopp[v].
\end{equation}
The operator $\qopp[v]$ is the same as $\qop[v]$ defined in (\ref{opdef}) but with $r$'s and $r$ derivatives replaced with $\rho$'s and $\rho$ derivatives, and $\lopp[v]$ is
$$
\lopp[v] =
\frac{
  \frac{1}{2}a\rho v_{\rho} + av
}{
  \rho^2
}
+ \frac{1}{2}\rho v_\rho.
$$
By $v_{\tau}$ we will always mean the partial derivative of $v$ with respect to $\tau$, keeping $\rho$ fixed, whereas $v_t$ means the partial derivative with respect to $t$, keeping $r$ fixed.  Space derivatves are unambiguous, since fixing $\tau$ is the same as fixing $t$.  The extra term in $\lopp[v]$ compared to $\lop[v]$ comes from the time derivative
$$v_t
= v_\tau\tau_t   + v_\rho\rho_t
= v_\tau e^{-\tau} - \frac{1}{2}\rho e^{-\tau}$$

If we substitute the $e^{jb\tau}$ expansion (\ref{paraexpansiontype}) into the evolution equation (\ref{riccievopara}) and equate coefficients of $e^{b\tau}$ we get the ODE
$$bv^{(1)} - \lopp[v^{(1)}] = 0$$
for $v^{(1)}$.  We set $v_{\para}^{(1)}$ to be a solution to this ODE, which is given by
$$v_{\para}^{(1)}(\rho)
= K\rho^{2b}\left(1 + a\rho^{-2}\right)^{1+b}
= K\rho^{-2}\left(\rho^2 + a\right)^{1+b}$$
and set $v_{\para}$ to be the approximation
$$v_{\para}(\rho, \tau) = v_{\para}^{(1)}(\rho)e^{b\tau}.$$

We check that the parabolic approximation $v_{\para}$ can match the outer approximation $v_{\out}$ as $\rho \to \infty$ with fixed $\tau$:
$$
v_{\para}(\rho, \tau) = K\rho^{2b}(1 + (1+b)a\rho^{-2} + O(\rho^{-4}))e^{b\tau},
\qquad \rho \to \infty
$$
Thus we see if we choose $K = 1$ the approximations match to order $O(\rho^{2b-4})$.  Note that this error does go to zero as $\rho \to \infty$, because $b < 1$.

\begin{remark}
  In \cite{AIK}, which describes the shape of the neckpinch before the singularity, the corresponding region is called the ``intermediate'' region.  (The name ``parabolic'' is used for the region near the neck.)  The formal solution before the singularity is
  $$v_{\para}\big|_{t < 0} = \rho^2(a - \rho^{2})^{1+b}e^{-b\tau} = \rho^2(a - \rho^{2})^{1+b}e^{-b|\tau|}$$
  here $\tau$ is approaching $\infty$ as we approach the singularity (as $t \nearrow 0$).  This is similar to our formal solution
  $$v_{\para}\big|_{t > 0} = \rho^2(a + \rho^{2})^{1+b}e^{-b|\tau|}$$
  the difference -- the minus sign inside the parentheses -- causes the solution to become cylindrical ($v \to 0$) near $\rho = \sqrt{a}$. 
\end{remark}

The formal solution $v_{\para} = \rho^{-2}(\rho^2 + a )^{1+b}e^{b\tau}$ becomes large if $e^{b\tau} \gtrsim \rho^2$, i.e. if $t^{(b+1)/2} \gtrsim r$.  So we let
$$\sigma = rt^{-(b+1)/2}$$
be a rescaled space coordinate, and
$$\theta = t^{(b+1)/2}$$
be a rescaled time coordinate.  The left boundary of the parabolic region will be defined by
$$\sigma \gtrsim 1$$

\subsection{Inner Region}
\label{sec:innerformal}

We calculate the evolution of $v$ in the $(\sigma, \theta)$ (inner) coordinates. First calculate the partial derivatives
$$
v_t = \theta_t v_{\theta}  + \sigma_t v_{\sigma}
\qquad
v_r  = \sigma_r v_{\sigma} = \theta^{-1}v_\sigma
\qquad
v_{rr} =  \theta^{-2}v_{\sigma \sigma}.
$$
The partial derivative $v_{\theta}$ is always calculated keeping $\sigma$ fixed. Using $r \theta^{-1} = \sigma$,
$$
rv_r = \sigma v_\sigma, 
\qquad
r^2v_{rr} = \sigma^2 v_{\sigma \sigma}.
$$
Then by looking at the form of $\fop[v]$ in (\ref{opdef}) we find that $v_t = \fop[v]$ is equivalent to
\begin{equation}
  \theta_t v_\theta -\sigma \theta^{-1} \theta_t v_\sigma
  =
  \theta^{-2}\fops[W]
\end{equation}
(where $\fops$ is obtained from $\fop$ by replacing $r$ with $\sigma$) so
\begin{equation}\label{riccievotip}
  \theta\theta_t (\theta v_\theta - \sigma v_\sigma)
  =
  \fops[v].
\end{equation}

Notice that $\theta \theta_t = \frac{1}{2}(b+1)\theta^{2b/(b+1)}$.  If we put $v_{\para}$ in the $\sigma, \theta$ coordinates we see, for fixed $\sigma$, 
\begin{align}
  v_{\para}
  &= \sigma^{2b}(a\sigma^{-2} + t^b)^{1+b} \notag\\
  &= a^{1+b}\sigma^{-2} + (1+b)a^bt^b + O(t^{2b}) \notag\\
  &= a^{1+b}\sigma^{-2} + 2a^b\theta \theta_t + O((\theta \theta_t)^2) \label{paraInTip}
\end{align}
Inspired by this, and comforted by $\theta\theta_t \searrow 0$ as $t \searrow 0$, we look for the first couple of terms of a series expansion in powers of $\theta \theta_t$.  That is, we will make an approximation of the form
$$v_{\inner} = v_{\inner}^{(0)} + v_{\inner}^{(1)}\theta \theta_t$$
  
By now, the equations are exactly as in \cite{ACK}; the coordinates are different but the evolution equation (\ref{riccievotip}) and the boundary data (\ref{paraInTip}) are the same.  We summarize the reasoning here.

The constant-in-time term in the series expansion will solve
$$\fops[v_{\inner}^{(0)}] = 0$$
which has as its solutions
$$v_{\inner}^{(0)} = \bry(\kappa\sigma),$$
where $\bry$ is the function corresponding to the Bryant soliton, and $\kappa \in \real_{> 0}$ is arbitrary.   For further details of the Bryant solition in the current coordinate system, see appendix C of \cite{ACK}.  The important facts for us are
\begin{equation}\label{bryantinf}
  \bry(\sigma) = \frac{1}{\sigma^2} + o(\sigma^{-2}), \qquad \sigma \to \infty,
\end{equation}
\begin{equation}\label{bryantzero}
  \bry(\sigma) = 1 + b_2\sigma^2 + o(\sigma^{2}), \qquad \sigma \to 0,
\end{equation}
with $b_2 < 0$, and the smoothness condition $\bry(0) = 1$.
\begin{figure}
  \centering
  \includegraphics[scale=1]{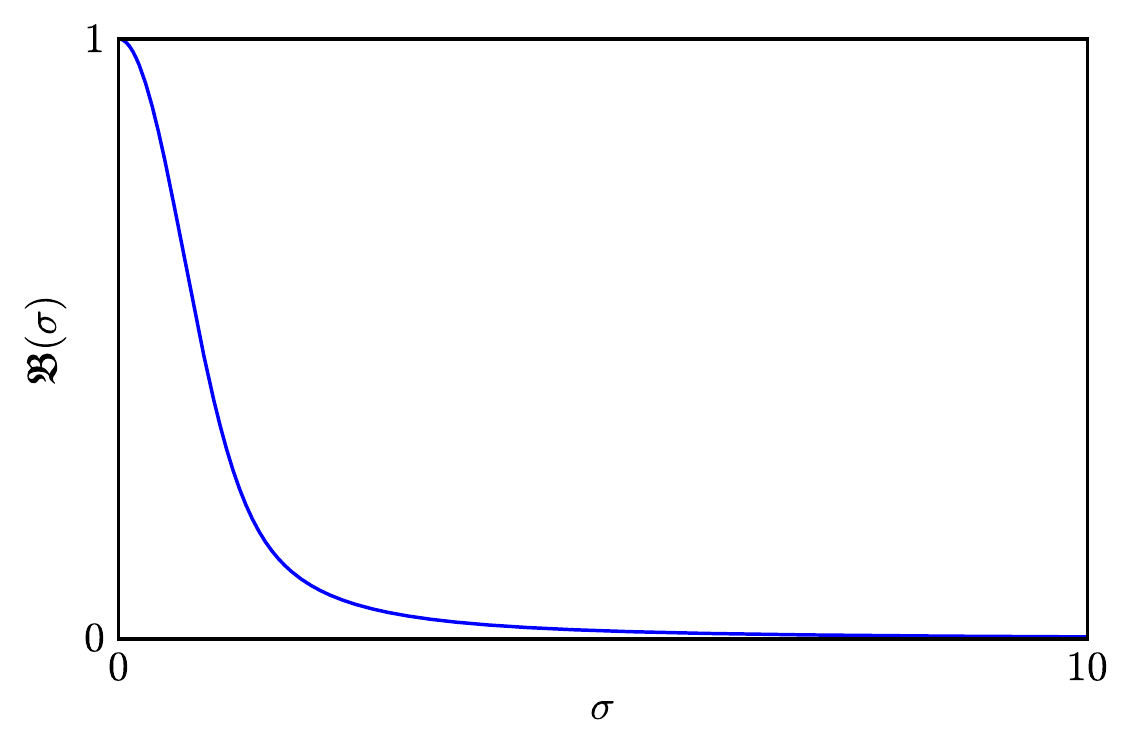}
  \caption{The Bryant soliton in the $r$ coordinate system.}
\end{figure}

Note that the first term in the series expansion, which we just computed, can only match the first term of (\ref{paraInTip}).  Now we search for the next term in the expansion.  Plugging $v = v_{\inner}^{(0)}(\sigma) + \theta \theta_t v_{\inner}^{(1)}(\sigma)$ into the left side of (\ref{riccievotip}) yields
\begin{align}
  \theta\theta_t(\theta v_\theta - \sigma v_\sigma)
  &= 
  \theta \theta_t (\theta (\theta\theta_t)_{\theta}v_{\inner}^{(1)} - \sigma (v^{(0)}_{\inner, \sigma} + \theta \theta_t v^{(1)}_{\inner, \sigma})) \notag\\
  & = -\theta\theta_t \sigma v^{(0)}_{\inner,\sigma} + o(\theta\theta_t),
  \qquad
  \theta \to 0
  \label{tipLHS}
\end{align}
and to the right side yields
\begin{align}
  \fops[v] &= \fops[v^{(0)}_{\inner} + \theta \theta_t v^{(1)}_{\inner}] \notag\\
  &= \fops[v^{(0)}_{\inner}] + \theta \theta_t \dfop{v^{(0)}_{\inner}}{v^{(1)}_{\inner}} + o(\theta\theta_t),
  \qquad \theta \to 0 \notag\\
  &= \theta \theta_t \dfop{v^{(0)}_{\inner}}{v^{(1)}_{\inner}} + o(\theta\theta_t),
  \qquad \theta \to 0.
  \label{tipRHS}
\end{align}
Here $\dfop{v^{(0)}_{\inner}}{v^{(1)}_{\inner}}$ is the differential of $\fops$ at $v^{(0)}_{\inner}$ in the direction $v^{(1)}_{\inner}$, that is
$$\frac{d}{d\epsilon}\fops[v^{(0)}_{\inner} + \epsilon v^{(1)}_{\inner}] \big|_{\epsilon = 0}$$
which is given by
$$\dfop{v^{(0)}_{\inner}}{v^{(1)}_{\inner}} = \lops[v^{(1)}_{\inner}] + 2 \qops[v^{(0)}_{\inner}, v^{(1)}_{\inner}].$$

So by equating (\ref{tipLHS}) and (\ref{tipRHS}) (the left- and right-hand sides of (\ref{riccievotip}))  we find that $v^{(1)}_{\inner}$ must satisfy
\begin{equation}
  \dfop{v^{(0)}_{\inner}}{v^{(1)}_{\inner}} = -\sigma v_{\inner 0,\sigma}
\end{equation}

The following lemma is \cite[Lemma~4]{ACK}\footnote{The statement and proof of the lemma in \cite{ACK} state $\cry(\sigma) = \frac{a}{2} + o(\sigma^{-2})$ as $\sigma \to \infty$, and later $\cry(\sigma) = \frac{2}{a} + o(\sigma^{-2})$ is used. Following the proof shows that the latter is correct, as we state below.}
\begin{lemma}\label{cryexistence}
  The equation
  \begin{equation}\label {cryexisteqn}
    \dfop{w_{0}}{w_{1}} = -\sigma w_{0,\sigma}, \qquad w_{0}(\sigma)= \bry(\sigma)
  \end{equation}
  has a strictly positive bounded solution $w_1 = \cry: [0, \infty) \to [0, \infty)$, with the asymptotics
  $$\cry(\sigma) = M\sigma^2  + o(\sigma^2), \qquad \sigma \to 0$$
  $$\cry(\sigma) = \frac{2}{a} + o(\sigma^{-2}), \qquad \sigma \to \infty$$
and any other solution which is bounded at 0 is given by $\cry(\sigma) + \lambda \phi(\sigma)$, where $\phi(\sigma) = -\sigma \bry'(\sigma)$ and $\lambda$ is arbitrary.
The solutions to (\ref{cryexisteqn}) with $w_{0} = \bry(\kappa\sigma)$ which are bounded at $0$  are given by
$$\frac{\cry(\kappa \sigma) + \lambda \phi(\kappa \sigma)}{\kappa^2}$$
\end{lemma}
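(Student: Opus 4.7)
The plan is to treat this as a linear second-order ODE problem for $w_1$ on $(0, \infty)$ with prescribed behavior at both endpoints, following \cite[Lemma~4]{ACK}. The operator $\dfop{\bry}{\cdot} = \lops[\cdot] + 2\qops[\bry, \cdot]$ is a linear second-order differential operator in $\sigma$ whose coefficients are smooth for $\sigma > 0$ (because $\bry$ is smooth and strictly positive) and whose origin is a regular singular point due to the $\sigma^{-2}$ weights in (\ref{opdef}).

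The first step is to identify the kernel. Directly from (\ref{opdef}) one checks the scaling identity $\fops[u(\kappa\cdot)](\sigma) = \kappa^2 \fops[u](\kappa\sigma)$ for every $\kappa > 0$; differentiating at $\kappa = 1$ with $u = \bry$ (using $\fops[\bry] = 0$) yields $\dfop{\bry}{\phi} = 0$ for $\phi(\sigma) = -\sigma\bry'(\sigma)$. The asymptotics (\ref{bryantinf})--(\ref{bryantzero}) give $\phi(\sigma) = -2b_2\sigma^2 + o(\sigma^2)$ as $\sigma \to 0$ and $\phi(\sigma) = 2\sigma^{-2} + o(\sigma^{-2})$ as $\sigma \to \infty$, with $\phi > 0$ on $(0, \infty)$ since $\bry$ is strictly decreasing. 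A second independent homogeneous solution $\psi$ is produced by reduction of order (Abel's Wronskian formula); the computation forces $\psi$ to be unbounded at the origin, so the only kernel elements bounded at $\sigma = 0$ are multiples $\lambda\phi$.

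For the particular solution I would use variation of parameters, writing $w_1 = \alpha(\sigma)\phi + \beta(\sigma)\psi$ against the forcing $-\sigma\bry'(\sigma) = \phi(\sigma)$ and integrating from a convenient reference point. Choosing the constants of integration so that the $\psi$-contribution cancels the singular part at the origin — which is possible precisely because $\phi$ exhausts the bounded homogeneous behavior there — yields a solution $\cry$ bounded at $0$. The ansatz $\cry = M\sigma^2 + o(\sigma^2)$ plugged into the equation at $\sigma \to 0$ determines $M$ from $b_2$ and $a$. For the behavior at infinity, I substitute $\cry(\sigma) = C + o(\sigma^{-2})$ into $\dfop{\bry}{\cry} = -\sigma\bry'$; using $\bry \sim \sigma^{-2}$ one sees $2\qops[\bry, \cry]$ is lower order while $\lops[\cry] \sim aC/\sigma^2$, and matching against the forcing $2/\sigma^2$ gives $C = 2/a$ (the correction to \cite{ACK} recorded in the footnote). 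Strict positivity of $\cry$ on $(0, \infty)$ I would establish by a maximum-principle comparison using $\phi$ as a barrier on compact subintervals, leveraging positivity of the forcing and of the boundary data at both endpoints.

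The rescaling statement is then a direct consequence of the covariance noted above. Differentiating $\fops[u(\kappa\cdot)](\sigma) = \kappa^2 \fops[u](\kappa\sigma)$ at $u = \bry$ in a direction $w$ gives $\dfop{\bry(\kappa\cdot)}{w(\kappa\cdot)}(\sigma) = \kappa^2 \dfop{\bry}{w}(\kappa\sigma)$, so the substitution $w_1(\sigma) = f(\kappa\sigma)/\kappa^2$ converts (\ref{cryexisteqn}) with $w_0(\sigma) = \bry(\kappa\sigma)$ into the original equation for $f$; the classification $f = \cry + \lambda\phi$ transfers to $(\cry(\kappa\sigma) + \lambda\phi(\kappa\sigma))/\kappa^2$. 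The main obstacle I expect is the third step: pinning down the leading constant $2/a$ at infinity when sub-leading terms nearly cancel, and verifying strict positivity without an explicit formula for $\cry$.
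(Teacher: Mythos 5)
The paper does not actually prove this lemma: it is quoted verbatim from \cite[Lemma~4]{ACK} (with the footnote correcting the constant at infinity from $a/2$ to $2/a$), so the only "proof" in this paper is the citation. Your reconstruction is essentially the argument one finds there, and it is sound. The scaling identity $\fops[u(\kappa\cdot)](\sigma)=\kappa^2\fops[u](\kappa\sigma)$ is correct, its derivative at $\kappa=1$ does give $\dfop{\bry}{\phi}=0$ with $\phi=-\sigma\bry'$, and the frozen-coefficient indicial equation at the origin has roots $2$ and $-a/2$, confirming that the second homogeneous solution is unbounded there and that $\lambda\phi$ exhausts the bounded kernel. Your computation at infinity is also the right one and reproduces the corrected constant $2/a$: with $\bry\sim\sigma^{-2}$ the term $2\qops[\bry,\cry]$ is $O(\sigma^{-4})$ while $\lops[\cry]\sim aC\sigma^{-2}$ must balance the forcing $\phi\sim 2\sigma^{-2}$. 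The covariance argument for the rescaled family is exactly right.

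Two points where your sketch glosses over something worth flagging. First, the forcing $-\sigma\bry'=\phi$ is itself (to leading order $\sigma^2$ at the origin) resonant with the bounded kernel element, so the naive ansatz $M\sigma^2$ is annihilated at leading order and $M$ is determined only at the next order of the expansion; this does not threaten the stated asymptotics $\cry=M\sigma^2+o(\sigma^2)$ (no value of $M$ is claimed), but "plugging in the ansatz determines $M$" is slightly too quick. Second, strict positivity: \cite{ACK} obtains it from an explicit integral representation with positive integrand rather than a comparison argument, and your maximum-principle route would need care because the zeroth-order coefficient of $\dfop{\bry}{\cdot}$ is not obviously of the sign required for a comparison principle; the variation-of-parameters formula you already set up gives positivity more directly once one checks the signs of $\phi$, $\psi$, and the Wronskian. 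Neither issue is a genuine gap in the approach.
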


In order to find the correct choice for $\lambda$ in an approximation, we would have to calculate more terms in the parabolic region, to match with the $\sigma^{-2}\theta \theta_t$ terms which are affected by $\lambda$.  We don't neeed such a good approximation for our purposes, so we just drop the $\lambda \phi (\kappa \sigma)$ term.

So, we take our formal solution to be
$$v_{\inner} = \bry(k\sigma) + \kappa^{-2}\cry(k\sigma)(\theta \theta_t).$$
We now find $k$ by matching with the parabolic solution.  Note
$$
v_{\inner} = \kappa^{-2}\sigma^{-2} + \kappa^{-2}\frac{2}{a}(\theta \theta_t) + o(\sigma^{-2}),
\qquad \sigma \to \infty.
$$
Comparing this with what we found in (\ref{paraInTip}), we find we must choose
$$\kappa = \kappa_0 \defeq a^{-(b+1)/2}$$
\label{k0def}

\section{Barriers}
\label{sec:barriers}
In this section we construct upper and lower barriers to the evolution equation (\ref{riccievo}).  By barriers, we mean properly ordered sub- and super-solutions to $(\pars{}{t} - \fop)$.  At time $0$, the barriers will surround the initial data $v_{\init}$.  In Section \ref{sec:conv} we will apply the maximum principle given by Lemma \ref{maxprinc} to the barriers we find in this section.

We first find barriers, based on our formal solutions, which are valid only in the outer, intermediate, and inner regions.  The barriers, and regions in which they are valid, are briefly summarized in Table \ref{tab:barriertable}.  Constants appear in the barriers, which must be chosen correctly.
\begin{table}[t]
  \begin{tabular}{l c  r}
    \toprule
    Outer
    &
    $(1 \pm \delta)v_0(r) + (1 \pm \epsilon) t \fop[(1 \pm \delta)v_0(r)]$
    &
    $\rho_* \sqrt{t} < r < r_*$
    \\
    Intermediate
    &
    $(1 \pm \gamma)v_{\para}^{(1)}e^{b\tau} \pm \frac{D}{\rho^4}e^{2b\tau}$
    &
    $\sigma_* e^{b\tau /2} < \rho < 3\rho_*$
    \\
    Inner
    &
    $\bry (\kappa_\pm \sigma) + (1 \mp \epsilon)\theta \theta_t \kappa_\pm^{-2} \cry(\kappa_\pm \sigma)$
    &
    $0 < \sigma < 3\sigma_*$
    \\
    \bottomrule
  \end{tabular}
  \caption{The barriers in each of the regions}
  \label{tab:barriertable}
\end{table}

The constants appearing in each of the barriers have restrictions on how they may be chosen, simply to ensure each barrier is a sub- or super-solution within its region of definition.  For example, in order to make $\epsilon$ smaller, we must increase $\rho_*$.

Furthermore, we will choose the constants to satisfy gluing conditions.  We explain these conditions for the case of the upper barrier; the lower barrier has similar conditions.  The upper barrier (across all regions) is defined as
$$
v^+=
\begin{cases}
  v_{\inner}^+ & \sigma \leq \sigma_*  \\
  \min (v_{\inner}^+, v_{\para}^+) & \sigma_* \leq \sigma \leq 3\sigma_* \\
  v_{\para}^+ & 3\sigma_*e^{b\tau/2} \leq \rho \leq \rho_* \\
  \min (v_{\para}^+, v_{\out}^+) & \rho_* \leq \rho \leq 3\rho_* \\
  v_{\out}^+ & 3\rho_*\sqrt{t} \leq r \leq r_*
\end{cases}
$$
For this to yield a piecewise smooth function we need
\begin{equation}\label{parainnnerglueconds}
v_{\inner}^+(\sigma_*) < v_{\para}^+(\sigma_*),
\qquad
v_{\para}^+(3\sigma_*) < v_{\inner}^+(3\sigma_*);
\end{equation}
\begin{equation}\label{paraoutglueconds}
v_{\para}^+(\rho_*) < v_{\out}^+(\rho_*),
\qquad
v_{\out}^+(3\rho_*) < v_{\para}^+(3\rho_*).
\end{equation}
These conditions are the gluing conditions for the supersolution. We will show in Sections  \ref{sec:outer_para_glue} and \ref{sec:para_inner_glue} that the constants may be chosen so that the barriers satisfy these conditions.

\begin{figure}
  \centering
  \scalebox{.7}{\begin{tikzpicture}
  [scale=.8,auto]
  \node (e) at (2,10)  {$\epsilon$};
  \node (d) at (10,10)  {$\delta$};
  \node (r) at (6,9) {$r_*$};
  \node (p) at (2,6) {$\rho_*$};
  \node (D) at (7,4)  {$D$};
  \node (g) at (13,4)  {$\gamma_\pm$};
  \node (s) at (2,2)  {$\sigma_*$};
  \node (k) at (13,0)  {$\kappa_\pm$};

  \draw [->] (r) to node {B1} (p);
  \draw [->] (e) to node {G1} (p);
  \draw [->] (e) to node {G1} (g);
  \draw [->] (d) to node {G1} (g);
  \draw [->] (d) to node {B0} (r);
  \draw [->] (p) to node {B2} (D);
  \draw [->] (p) to node {B2, G2} (s);
  \draw [->] (g) to node {G2} (k);
  \draw [->] (D) to node {G2} (k);
  \draw [->] (D) to node {B2, G2} (s);
  \draw [->] (s) to node {G2} (k);

  
\end{tikzpicture}}
  \caption{The directed acyclic graph showing the dependencies of the constants, e.g. $r_*$ depends on $\delta$. The edge labels reference how the dependencies come up: B0, B1, and B2 indicate the steps where we find barriers in the outer and parabolic regions, G1 and G2 indicate the two gluing steps.  All constants depend on $n$ and $b$, and the time at which the barriers are no longer valid ($t_*$) depends on everything and must be made smaller finitely many times.}
  \label{cnst_dag}
\end{figure}
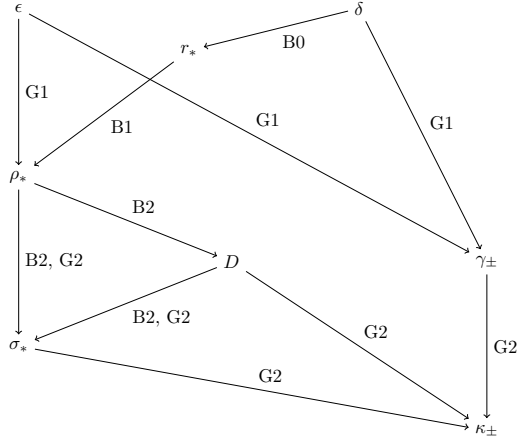
\subsection{Barriers in the Outer Region}

We state the outer barrier theorem here a little more generally, to highlight the inequalities which are vital to the argument.  Recall that $\fnrm{\cdot }$ was defined in (\ref{fnrmdef}) and gives a simple way to control $\lop$ and $\qop$:
\begin{equation}\label{opbnd2}
  r^2 \lop[v] \leq \fnrm{v}, \qquad r^2 \qop[v_1, v_2] \leq \fnrm{v_1}\fnrm{v_2}.
\end{equation}
\begin{theorem}\label{outerbarriertheorem}
  For all $\delta, \epsilon$ sufficiently small, the functions
  $$v_{\out}^\pm = (1 \pm \delta)v_0 + (1 \pm \epsilon)t\fop[(1\pm \delta)v_0]$$
  are properly ordered barriers in the region
  $$\{(r, t): \sqrt{t}\rho_* \leq r \leq r_*\}$$
  where $\rho_* = C/\sqrt{\epsilon}$, and $C$ depends on $v_0$ and $r_*$.
\end{theorem}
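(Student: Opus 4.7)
My plan is to treat this as a direct computation: substitute the candidate $v_{\out}^\pm$ into $(\partial_t - \fop)$, identify the leading ``good'' term with a definite sign, and control the remaining terms using the pointwise bounds (\ref{opbnd2}).

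Write $V_\pm = (1\pm\delta)v_0$ and $W_\pm = \fop[V_\pm]$, so that $v_{\out}^\pm = V_\pm + (1\pm\epsilon)t W_\pm$. Since $\lop$ is linear and $\qop$ is a symmetric bilinear form with $\qop[v]=\qop[v,v]$, expanding $\fop[v_{\out}^\pm]$ gives
$$\fop[v_{\out}^\pm] = W_\pm + (1\pm\epsilon)t\,\lop[W_\pm] + 2(1\pm\epsilon)t\,\qop[V_\pm,W_\pm] + (1\pm\epsilon)^2 t^2\,\qop[W_\pm].$$
Since $\partial_t v_{\out}^\pm = (1\pm\epsilon)W_\pm$, the main cancellation yields
$$(\partial_t-\fop)v_{\out}^\pm = \pm\epsilon\, W_\pm - (1\pm\epsilon)t\lop[W_\pm] - 2(1\pm\epsilon)t\,\qop[V_\pm,W_\pm] - (1\pm\epsilon)^2 t^2\,\qop[W_\pm].$$
So the sign of the leading $\pm\epsilon W_\pm$ term is exactly what we want for a super/sub-solution, provided $W_\pm > 0$ and the three error terms are dominated.

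The next step is to check that $W_\pm$ has a definite sign and good size. The calculation already performed in the outer formal section gives
$$\fop[v_0] = a(1+b)r^{2b-2} + O(r^{4b-2}), \qquad r \to 0,$$
and by linearity/bilinearity $W_\pm = (1\pm\delta)\lop[v_0] + (1\pm\delta)^2\qop[v_0]$, so for $\delta$ sufficiently small and $r\le r_*$ (with $r_* = r_*(\delta)$ chosen small enough) we get $W_\pm \ge c_1(\delta)\,r^{2b-2} > 0$. At the same time $\fnrm{V_\pm} \le C r^{2b}$ and $\fnrm{W_\pm} \le C r^{2b-2}$, which may be read off termwise.

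Now apply the bounds (\ref{opbnd2}) to each error term:
$$t|\lop[W_\pm]| \le C\,t\,r^{2b-4},\qquad t|\qop[V_\pm,W_\pm]|\le C\,t\,r^{4b-4},\qquad t^2|\qop[W_\pm]|\le C\,t^2\,r^{4b-6}.$$
Comparing with $\epsilon W_\pm \ge c_1\epsilon\,r^{2b-2}$, the three inequalities we need become, up to constants depending only on $n,b,\delta$:
$$\frac{t}{r^2}\le C\epsilon,\qquad t\,r^{2b-2}\le C\epsilon,\qquad t^2\,r^{2b-4}\le C\epsilon.$$
The first is precisely $\rho^2 \ge C/\epsilon$, i.e.\ $r \ge \rho_*\sqrt{t}$ with $\rho_* = C/\sqrt{\epsilon}$; the other two are automatically satisfied once we shrink $t_*$ (since $r\le r_*$ is bounded and $2b-2<0$, $t r^{2b-2} \le t_*^{b}\rho_*^{2b-2}$ and similarly for the last by a power of $t_*$). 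This is exactly the edge pattern shown in Figure \ref{cnst_dag}: $\rho_*$ is chosen in terms of $\epsilon$, $r_*$ in terms of $\delta$, and $t_*$ is shrunk at the very end.

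It only remains to verify the ordering. At $t=0$ we have $v_{\out}^-(r,0) = (1-\delta)v_0 < (1+\delta)v_0 = v_{\out}^+(r,0)$, and for $t>0$ the difference $v_{\out}^+ - v_{\out}^- = 2\delta v_0 + O(t\, r^{2b-2})$ is positive throughout $\rho \ge \rho_*$, $r\le r_*$ after possibly shrinking $t_*$, using the same pointwise bounds. The only real obstacle is the bookkeeping: the choice of small parameters must respect the DAG (we pick $\delta,\epsilon$ first and freely, then $r_*(\delta)$, then $\rho_* \sim 1/\sqrt\epsilon$, and finally $t_*$), but once this is in place every estimate is a direct consequence of (\ref{opbnd2}) and the explicit form of $\fop[v_0]$.
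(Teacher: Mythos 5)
Your proposal is correct and follows essentially the same route as the paper: expand $\fop[v_{\out}^\pm]$ via the linear/bilinear decomposition, keep $\pm\epsilon\,\fop[(1\pm\delta)v_0]$ as the good term, and absorb the three error terms using (\ref{opbnd2}) together with the fact that $\fop[v_0]$ is an explicit sum of powers of $r$ with positive leading coefficient $a(1+b)r^{2b-2}$ (the paper packages this as the inequality $\fnrm{\fop[w_0]}\leq C|\fop[w_0]|$ rather than your explicit power counting, but the content is identical). One small caution: for the proper ordering, shrinking $t_*$ does not help along the curve $r=\rho_*\sqrt{t}$ where $t/r^2\equiv\rho_*^{-2}$ is independent of $t$; the cleaner justification is that the $t$-coefficient $(1+\epsilon)\fop[(1+\delta)v_0]-(1-\epsilon)\fop[(1-\delta)v_0]=2(\epsilon+\delta)a(1+b)r^{2b-2}+O(r^{4b-2})$ is itself nonnegative for $r\leq r_*$, so no domination by $2\delta v_0$ is needed.
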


Recalling that our initial data $v_{\init}$ satisfies
$$v_{\init}(r) = (1 + o(1))v_0(r),$$
and taking a glance at the $(1 \pm \delta)$ factor in $v^{\pm}_{\out}$, we see that for any $\delta$ there is $r_*$ so that $v_{\out}^-(r,0) < v_{\init}(r) < v_{\out}^+(r,0)$ for $r \in [0, r_*]$, i.e. we have barriers for our initial data $v_{\init}$, not just our approximate initial data $v_0$.

\begin{proof}[Proof (of Theorem \ref{outerbarriertheorem}).]
  Set $w_0 = (1 \pm \delta)v_0$ and $\bar \epsilon = (1 \pm \epsilon)$.  The information we use about $v_0$ is the following. $v_0$ is continuous on $[0, r_*]$ and for $r \in [0, r_*]$ and $\delta$ sufficiently small,
  \begin{equation}\label{fopfnrmbnd}
    \fnrm{\fop[w_0]}
    \leq
    C|\fop[w_0]|
  \end{equation}
  and
  \begin{equation}\label{foppositive}
    \fop[w_0] > 0
  \end{equation}
To see these bounds, note that
$
r^2
\fnrm{
    \fop[u]
}
$
consists of terms of the form $r^j\frac{\partial ^ju}{\partial r^j}$, ($0 \leq j \leq 3$) and of the form $r^{j+l}\frac{\partial ^ju}{\partial r^j}\frac{\partial ^lu}{\partial r^l}$, ($0 \leq j+l \leq 4$).  Similarly, $r^2 \fnrm{\fop[u]}$ consists of terms of the form $r^j \frac{\partial ^j u}{\partial r^j}$, ($0 \leq j \leq 1$) and of the form $r^{j+l} \frac{\partial ^ju}{\partial r^j}\frac{\partial ^lu}{\partial r^l}$, ($0 \leq j + l \leq 2$).  Since $v_0= r^{2b}$ is just a power of $r$, both sides of (\ref{fopfnrmbnd}) are sums of powers of $r$, with the same smallest power, namely $r^{2b-2}$.  Furthermore, the coefficient on $r^{2b}$ in $\fop[r^{2b}]$ is $a(b+1) > 0$.  Thus (\ref{fopfnrmbnd}) and (\ref{foppositive}) hold for $v_0$, for small enough $r_*$, and for $w_0$ for small enough $\delta$.
  
Inequality (\ref{foppositive}) immediately shows that the barriers are properly ordered.  We only need to show that they are sub- and super-solutions.  

\begin{align}
(\partial_t -\fop)v_{out}^{\pm}
&= (1 \pm \epsilon)\fop[w_0]-\fop[w_0]
&&{}- \{ \fop[w_0 + (1 \pm \epsilon) t \fop[w_0]] - \fop[w_0]\}
\notag\\
&= \pm \epsilon \fop[w_0]
&&{}- \{ \fop[w_0 + (1 \pm \epsilon) t \fop[w_0]] - \fop[w_0]\}
\notag\\
&\eqdef \pm \epsilon \fop[w_0]
&&{}- H (r, t) \label{opbnd_outerbarthm}
\end{align}
The first term has the correct sign for $\delta$ sufficiently small, by inequality (\ref{foppositive}). We will bound the norm of the second term, which we have named $H(r, t)$.  Note that the second term is 0 for $t=0$, so at least for each $r$, the two terms together have the correct sign for small enough $t$.

Split $\fop$ into its linear and quadratic parts to see
$$
H(r, t)
= (1 \pm \epsilon) t \lop[\fop[w_0]]
+ 2(1 \pm \epsilon)  t \qop [w_0, \fop [w_0]]
+ (1 \pm \epsilon)^2 t^2 \qop[ \fop[w_0] ].
$$
We wish to bound this by a product of $\fop[w_0]$ with something we can make smaller than $\epsilon$, for use in (\ref{opbnd_outerbarthm}).  Using the bounds on $\lop$ and $\qop$ in (\ref{opbnd2}), then the upper bound in inequality (\ref{fopfnrmbnd}):
\begin{align*}
  |H(r,t)|
  &\leq C
  \left(
    t\frac{\fnrm{\fop[w_0]}}{r^2}
    + t \frac{\fnrm{w_0}\fnrm{\fop[w_0]}}{r^2}
    + t^2 \frac{\fnrm{\fop[w_0]}^2}{r^2}
  \right)\\
  &= C
  \left(
    1
    + \fnrm{w_0}
    + t\fnrm{\fop[w_0]}
  \right)
  \frac{t}{r^2}
  \fnrm{\fop[w_0]}\\
  &\leq C 
  \left(
    1 
    + \fnrm{w_0}
    + t\fop[w_0]
  \right)
  \frac{t}{r^2}
  \fop[w_0]
\end{align*}
Thus for $r \geq \rho_* \sqrt{t}$ we have
\begin{align*}
  |H(r,t)|
  &\leq C 
  \left(
    1 
    + \fnrm{w_0}
    + \frac{r^2}{\rho_*^2}\fop[w_0]
  \right)
  \frac{1}{\rho_*^2}
  \fop[w_0]
  \\
  &\leq C
  \left(
    1
    + \fnrm{w_0}
    + \frac{\fnrm{w_0}}{\rho_*^2}
    + \frac{\fnrm{w_0}^2}{\rho_*^2}
  \right)
  \frac{  1}{\rho_*^2}
  \fop[w_0]
\end{align*}
In the last line we again used the bound (\ref{opbnd2}) on $\lop$ and $\qop$.  Then, demanding $\rho_*>1$ and using continuity of $\fnrm{w_0}$ within $[0,r_*]$, we have
$$
|H(r, t)|
\leq
C\frac{1}{\rho_*^2}(1+2\fnrm{w_0} + \fnrm{w_0}^2)
\leq
C_{r_*}\frac{1}{\rho_*^2}\fop[w_0]
$$

Using this inequality in (\ref{opbnd_outerbarthm}),
$$
\partial_t v_{out}^{+} - \fop[v_{out}^+]
\geq \left( \epsilon - \frac{C_{r_*}}{\rho_*^2}\right)\fop[w_0]
$$
$$
\partial_t v_{out}^{-} - \fop[v_{out}^-]
\leq -\left(\epsilon - \frac{C_{r_*}}{\rho_*^2}\right)\fop[w_0]
$$
So for $\rho_* > \sqrt{\frac{C_{r_*}}{\epsilon}}$ the claim holds.
\end{proof}

\subsection{Barriers in the Parabolic Region}
Here we construct barriers based on the approximation in the parabolic region:
$$v \approx v_{\para}(\rho, \tau) = v^{(1)}_{\para}(\rho)e^{b\tau} = \rho^{-2}(a + \rho^2)^{1+b}e^{b\tau}$$
\begin{theorem}
\label{parabarriertheorem}
  For all $\rho_*$, there are $\sigma_*, D, \tau_*$ such that for all $\gamma < 1$
  $$v_{\para}^\pm = (1 \pm \gamma)v^{(1)}_{\para}e^{bt} \pm \frac{D}{\rho^4}e^{2bt}$$
  are barriers in the region
  $$
  \{(\rho, \tau): \sigma(\rho)>\sigma_*, \rho < 3\rho_*, 0 \leq \tau < \tau_*\}
  = \{(\rho, \tau): \sigma_* e^{b\tau/2} < \rho < 3\rho_*, 0 \leq \tau < \tau_*\}$$
\end{theorem}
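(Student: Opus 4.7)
The plan is to verify directly that each $v_\para^\pm$ is a sub- or super-solution by computing $(\partial_\tau - \lopp - \qopp)v_\para^\pm$ and checking its sign on the region. I will work out the supersolution case $v_\para^+$; the subsolution case $v_\para^-$ is parallel, with the additional bookkeeping that $v_\para^-$ must remain positive on the region (which falls out of the same estimates used below, provided $\sigma_*$ is taken large enough).

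The first move is to exploit the structure of the leading term. By construction in Section \ref{sec:paraformal}, $v_\para^{(1)}$ solves $bv_\para^{(1)} = \lopp[v_\para^{(1)}]$, so the linear operator annihilates $(1+\gamma)v_\para^{(1)}e^{b\tau}$ exactly. A short direct computation with $v=\rho^{-4}$ gives
\[
(\partial_\tau - \lopp)[\rho^{-4}e^{2b\tau}] = \bigl((2b+2)\rho^{-4} + a\rho^{-6}\bigr)e^{2b\tau},
\]
where \emph{both} terms are strictly positive. Consequently, after cancellation,
\[
(\partial_\tau - \lopp - \qopp)v_\para^+ = D\bigl((2b+2)\rho^{-4} + a\rho^{-6}\bigr)e^{2b\tau} - \qopp[v_\para^+],
\]
and the entire task reduces to dominating $\qopp[v_\para^+]$ by the $D$-correction.

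Next, I would expand $\qopp[v_\para^+]$ bilinearly as a sum at orders $e^{2b\tau}$, $e^{3b\tau}$, $e^{4b\tau}$:
\[
\qopp[v_\para^+] = (1+\gamma)^2\qopp[v_\para^{(1)}]e^{2b\tau} + 2(1+\gamma)D\,\qopp[v_\para^{(1)},\rho^{-4}]e^{3b\tau} + D^2\qopp[\rho^{-4}]e^{4b\tau}.
\]
Using the quadratic estimate $|\qopp[v_1,v_2]| \leq C\fnrmrho{v_1}\fnrmrho{v_2}/\rho^2$ together with the asymptotics $v_\para^{(1)}\sim a^{1+b}\rho^{-2}$ as $\rho \to 0$ and $v_\para^{(1)}\sim \rho^{2b}$ as $\rho\to\infty$, one obtains pointwise bounds of the form $|\qopp[v_\para^{(1)}]| \leq C(\rho^{-6}+1)$, $|\qopp[v_\para^{(1)},\rho^{-4}]| \lesssim \rho^{-8}+1$, and $|\qopp[\rho^{-4}]| \lesssim \rho^{-10}$, with constants depending only on $n$, $b$, and $\rho_*$.

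The main obstacle is that one must match behaviors at both ends of the $\rho$-interval $[\sigma_* e^{b\tau/2},3\rho_*]$: the correction is largest where the $\rho^{-6}$ error is largest (near $\rho=0$) but smallest where the error is merely $O(1)$ (near $\rho=3\rho_*$), and the higher-order-in-$e^{b\tau}$ pieces of $\qopp[v_\para^+]$ blow up near $\rho = 0$. The key trick is that the lower constraint $\rho \geq \sigma_* e^{b\tau/2}$ translates, for each $j$, into the uniform bound $\rho^{-2j}e^{jb\tau} \leq \sigma_*^{-2j}$. This lets me choose the constants in a definite order: first select $D$ large enough (depending on $\rho_*$, $n$, $b$) so that the $a\rho^{-6}$ piece of the correction beats $(1+\gamma)^2|\qopp[v_\para^{(1)}]|$ near $\rho = 0$ and the $\rho^{-4}$ piece beats it on $\rho \sim \rho_*$; then choose $\sigma_*$ large enough (depending on $D$ and the other constants) so that the $e^{3b\tau}$ and $e^{4b\tau}$ cross- and self-terms, which carry factors of $\sigma_*^{-2j}$, are absorbed into half of the correction (and simultaneously so that $v_\para^- > 0$). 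Finally, $\tau_*$ is fixed to close up any remaining smallness needed. The subsolution case is symmetric, since a sign flip in the correction is matched by a sign flip in the surplus produced by $(\partial_\tau - \lopp)[-D\rho^{-4}e^{2b\tau}]$.
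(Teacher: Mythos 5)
Your proposal is correct and follows essentially the same route as the paper: cancel the leading $e^{b\tau}$ term via the ODE $bv_{\para}^{(1)}=\lopp[v_{\para}^{(1)}]$, extract the positive surplus $D(a\rho^{-6}+(2b+2)\rho^{-4})e^{2b\tau}$ from the correction, bound the bilinear pieces of $\qopp$ with the $\fnrmrho{\cdot}$ estimate, and absorb the $e^{3b\tau}$, $e^{4b\tau}$ terms using $\rho^{-2}e^{b\tau}\leq\sigma_*^{-2}$, choosing $D$ before $\sigma_*$ before $\tau_*$. The only difference is cosmetic (your two-ended discussion of the error versus the paper's single bound $|\qopp[v_1]|\leq C\rho_*^{2+2b}\rho^{-6}$ valid on $\rho\leq 3\rho_*$), so no further comparison is needed.
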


\begin{proof}
  We are looking for sub and super solutions of the form
  $$v_{\para}^\pm = (1 \pm \gamma)v_1(\rho)e^{b \tau} \pm v_2(\rho)e^{2b\tau}$$
  where $v_1 = v^{(1)}_{\para} = \rho^{-2}(a + \rho^2)^{1+b}$.
    For this proof let $d = (1 \pm \gamma)$.  Calculate
  \begin{align}
    e^\tau(\partial_t v_{\para}^\pm - \fop[v_{\para}^\pm]) 
    &= \partial_\tau v_{\para}^\pm - \lopp[v_{\para}^\pm] - \qopp[v_{\para}^\pm]\nonumber\\
    &= d(bv_1 - \lopp[v_1])e^{b\tau}
     \pm (2bv_2-\lopp[v_2])e^{2b\tau}
    - d^2\qopp[v_1]e^{2b\tau}\nonumber\\
    &\phantom{{}=} \pm 2d\qopp[v_1, v_2]e^{3b\tau}
    - \qopp[v_2]e^{4b\tau} \nonumber\\
    &=\pm (2bv_2-\lopp[v_2])e^{2b\tau}
    - d^2\qopp[v_1]e^{2b\tau} \nonumber\\
    & \phantom{{}=} \pm 2d\qopp[v_1, v_2]e^{3b\tau}
    - \qopp[v_2]e^{4b\tau}\label{parabarrierexpr}
  \end{align}
  where for the last line we recall we defined $v_1$ as a solution to $bv_1-\lopp[v_1]=0$.  The plan is to make the first term have the correct sign, and bound the other terms.  

  \textbf{Bounding $\fnrmrho{v_1}:$}
  Calculate
  $$\frac{\partial_\rho v_1}{v_1} = \frac{1}{\rho}\frac{b\rho^2-a}{\rho^2+a}$$
  $$\frac{\partial_\rho^2v_1}{\partial_\rho v} =
  \frac{1}{\rho^2}
  \frac{
    (b^2-b)\rho^4 + (3-b)a\rho^2 + 2a^2
  }{
    (\rho^2 + a)(b\rho^2-a)
  }
  $$
  From which we see
  $$
  |\partial_\rho v_1| \leq \frac{C}{\rho}|v_1|
  \qquad
  |\partial_\rho^2 v_1| \leq \frac{C}{\rho^2}|v_1|
  $$
  so
  $$\fnrmrho{v_1} \leq C|v_1| = C\rho^{-2}(\rho^2+a)^{1+b}$$
  which gives
  $$\fnrmrho{v_1} \leq C\rho_*^{2+2b}\rho^{-2}$$
  provided we assume, say, that $\rho_*^2 \geq a$.

  \textbf{Causing the first $e^{2b\tau}$ term in (\ref{parabarrierexpr}) to swallow the second $e^{2b\tau}$ terms with a choice of $D$:}

  From the bound (\ref{opbnd}) on $\qopp$ with $\fnrmrho{\cdot}$:
  \begin{equation}\label{qoppbnd}
    |\qopp[v_1, v_1]| \leq \frac{C}{\rho^2}|v_1|^2 \leq  C\rho_*^{2+2b}\rho^{-6}
  \end{equation}

  The $\rho^{-6}$ inspired the choice $v_2 = D\rho^{-4}$.  Calculate
  \begin{equation}\label{otherstuffpos}
    (2bv_2-\lopp[v_2]) =
    D(a\rho^{-6} + 2(b+1)\rho^{-4} + a(2b+1)\rho^{4b-2}) \geq D\rho^{-6}
  \end{equation}
  Comparing (\ref{qoppbnd}) and (\ref{otherstuffpos}) we see we can choose $D = 4C\rho_*^{2+2b}$ to obtain
  \begin{equation}\label{maincourse}
    (2bv_2-\lopp[v_2]-d^2\qopp[v_1, v_1])e^{2b\tau} \geq C\rho^{-6}e^{2b\tau}
  \end{equation}
  (taking the case of the super solution, for example)

  \textbf{Causing the $e^{2b\tau}$ terms in (\ref{parabarrierexpr}) to swallow the higher order terms by choosing $\sigma_*$}

  We calculate some bounds on the higher order terms.  Calculate
  $$\fnrmrho{v_2} \leq C_{\rho_*, D}\rho^{-4},$$
  which implies by the bound (\ref{opbnd})
  $$|\qopp[v_1, v_2]| \leq C_{\rho_*, D}\rho^{-8},$$
  $$|\qopp[v_2, v_2]| \leq C_{\rho_*, D}\rho^{-10}.$$
  Use these bounds on the higher order terms:
  \begin{equation}\label{scrapsbnd1}
    \qopp[v_1, v_2]e^{3b\tau}
    \leq C_{\rho_*, D}\rho^{-8}e^{3b\tau}
    = C_{\rho_*, D}\rho^{-2}e^{b\tau}\rho^{-6}e^{2b\tau}
  \end{equation}
  \begin{equation}\label{scrapsbnd2}
    \qopp[v_2, v_2]e^{4b\tau}
    \leq C_{\rho_*, D}\rho^{-10}e^{4b\tau}
    = C_{\rho_*, D}\rho^{-4}e^{2b\tau}\rho^{-6}e^{2b\tau}
  \end{equation}  
  Where on the right side, for both terms, we have peeled off a factor of $\rho^{-6}e^{2b\tau}$ to make comparison with (\ref{maincourse}) possible. The remaining factor is bounded in the region mentioned in the statement of the theorem.  That is, in the region
  $$
  e^{\tau/2}\sigma_* \leq \rho \qquad
  \left(
    \text{equivalently } \rho^{-2}e^{\tau} \leq \frac{1}{\sigma_*^2}
  \right)
  $$
  we have
  \begin{equation}\label{scrapsbnd1reg}
    \qopp[v_1, v_2]e^{3b\tau} \leq \frac{C_{\rho_*, D}}{\sigma_*^2}\rho^{-6}e^{2b\tau}
  \end{equation} 
  \begin{equation}\label{scrapsbnd2reg}
    \qopp[v_2, v_2]e^{4b\tau} \leq \frac{C_{\rho_*, D}}{\sigma_*^4}\rho^{-6}e^{2b\tau}
  \end{equation}
  
  So comparing (\ref{maincourse}) with (\ref{scrapsbnd1reg}, \ref{scrapsbnd2reg}) we see we can choose $\sigma_* = \sigma_*(\rho_*, D, \gamma)$ so that
  \begin{align*}
    e^\tau(\partial_t v_{\para}^+ - \fop[v_{\para}^+]) 
    &=
    (2bv_2-\lopp[v_2]- d^2\qopp[v_1])e^{2b\tau} \\
    &\phantom{{}=}+ 2d\qopp[v_1, v_2]e^{3b\tau}
    - \qopp[v_2]e^{4b\tau} \\
    &\geq C_{\rho_*, D}\rho^{-6}e^{2b\tau}
  \end{align*}
  and the corresponding inequality also holds for the subsolution.
\end{proof}

\subsection{Barriers in the Inner Region}
The following theorem gives sub- and super-solutions to be used in the inner region.  These are identical to \cite[Theorem~6]{ACK}.  As mentioned in the formal derivation of the inner solution (Section \ref{sec:innerformal}), our equations here are the same as the equations in \cite{ACK}; only the coordinates are different.  Recall that in the inner region we have the approximation
$$
v
\approx v^{(0)}_{\inner}(\sigma) + v^{(1)}_{\inner}(\sigma)(\theta \theta_t)
= \bry (\kappa_0 \sigma) + \kappa^{-2}\cry (\kappa_0 \sigma)
$$

\begin{theorem}
  For any $\sigma_*$ and $\epsilon$ there exists $t_* = t_*(\sigma_*, \epsilon)$ such that for any $\kappa \in [\frac{\kappa_0}{2}, 2\kappa_0]$ the functions
  $$v_{\inner}^\pm(\sigma, t) = \bry(\kappa\sigma) + (1 \mp \epsilon)\kappa^{-2}\cry(\kappa\sigma)\theta \theta_t$$
  are sub- and super-solutions in the region
  $$
  \{(\sigma, \theta): 0 \leq \sigma \leq 3\sigma_*t^{(b+1)/2}, \theta \leq t_*\}
  $$
\end{theorem}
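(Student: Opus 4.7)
The strategy exploits the key fact that $\fops = \lops + \qops$ is exactly a sum of a linear and a quadratic operator, so expanding $\fops[v_{\inner}^\pm]$ around its Bryant leading term produces no higher-order errors. Write $w_0(\sigma) := \bry(\kappa\sigma)$, $w_1(\sigma) := \kappa^{-2}\cry(\kappa\sigma)$, and $d := 1 \mp \epsilon$, so that $v_{\inner}^\pm = w_0 + d\, w_1\, \theta\theta_t$. The plan is to evaluate both sides of the tip-coordinate evolution equation (\ref{riccievotip}) at $v_{\inner}^\pm$, identify the leading discrepancy as a single term of order $\theta\theta_t$, and choose $t_*$ small enough that the $(\theta\theta_t)^2$ remainder cannot destroy its sign.

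The exact quadratic expansion, combined with the Bryant identity $\fops[\bry(\kappa\sigma)] = 0$ and the defining property $\lops[w_1] + 2\qops[w_0, w_1] = -\sigma w_{0,\sigma}$ from Lemma \ref{cryexistence}, yields
$$\fops[v_{\inner}^\pm] = -d\,\theta\theta_t\,\sigma w_{0,\sigma} + d^2 (\theta\theta_t)^2\,\qops[w_1].$$
For the transport side, the relation $\theta\theta_t = \tfrac{b+1}{2}\theta^{2b/(b+1)}$ gives $\theta(\theta\theta_t)_\theta = \tfrac{2b}{b+1}\theta\theta_t$, so
$$\theta\theta_t\bigl(\theta v_\theta - \sigma v_\sigma\bigr) = -\theta\theta_t\,\sigma w_{0,\sigma} + d(\theta\theta_t)^2\bigl(\tfrac{2b}{b+1}w_1 - \sigma w_{1,\sigma}\bigr).$$
Subtracting produces the discrepancy
$$\Delta := \theta\theta_t(\theta v_\theta - \sigma v_\sigma) - \fops[v_{\inner}^\pm] = (d-1)\,\theta\theta_t\,\sigma w_{0,\sigma} + (\theta\theta_t)^2\, E(\sigma),$$
where $E$ is an explicit function built from $w_1$, $\sigma w_{1,\sigma}$, and $\qops[w_1]$, continuous and bounded on $[0, 3\sigma_*]$ uniformly in $\kappa \in [\kappa_0/2, 2\kappa_0]$.

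The leading coefficient has the correct sign: strict monotonicity of $\bry$ gives $\sigma w_{0,\sigma} < 0$ for $\sigma > 0$, and $d - 1 = \mp\epsilon$ turns $(d-1)\theta\theta_t\,\sigma w_{0,\sigma}$ positive for $v_{\inner}^+$ and negative for $v_{\inner}^-$, as required. The main obstacle is the degeneracy at $\sigma = 0$: by (\ref{bryantzero}), $-\sigma w_{0,\sigma} \sim -2b_2\kappa^2 \sigma^2$ as $\sigma \to 0$, so one must check $E$ vanishes at least as fast. This is where Lemma \ref{cryexistence} is used a second time: the asymptotic $\cry(\sigma) = M\sigma^2 + o(\sigma^2)$ forces each of $w_1$, $\sigma w_{1,\sigma}$, and $\qops[w_1]$ to be $O(\sigma^2)$ near $0$, so $E(\sigma) = O(\sigma^2)$. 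The ratio $|E(\sigma)|/|\sigma w_{0,\sigma}|$ therefore extends continuously to $[0,3\sigma_*]$ and is bounded by some constant $C = C(\sigma_*)$; choosing $t_* = t_*(\sigma_*, \epsilon)$ so that $C\,\theta\theta_t < \epsilon$ for $\theta \leq t_*$ forces $\Delta$ to carry the desired sign throughout the region. The boundary condition $v_{\inner}^\pm(0, t) = 1$ is automatic because $\bry(0) = 1$ and $\cry(0) = 0$, completing the plan.
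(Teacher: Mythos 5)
Your proposal is correct and follows essentially the same route as the paper: expand $\fops$ exactly into its linear and quadratic parts around the Bryant term, cancel the $O(1)$ and $O(\theta\theta_t)$ contributions using $\fops[\bry]=0$ and the defining equation for $\cry$ from Lemma \ref{cryexistence}, observe that the surviving $\mp\epsilon\,\theta\theta_t\,\sigma w_{0,\sigma}$ term has the right sign, and handle the degeneracy at $\sigma=0$ by matching the $O(\sigma^2)$ vanishing rates of the leading term and the $(\theta\theta_t)^2$ remainder before shrinking $t_*$. Your write-up is if anything slightly more explicit than the paper's (e.g.\ the factor $\theta(\theta\theta_t)_\theta = \tfrac{2b}{b+1}\theta\theta_t$ and the ratio formulation of the final comparison), but there is no substantive difference.
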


We include the proof here.  The upper and lower barriers will have to have different choices of $\kappa$, because the $(1 \mp \epsilon)$ factor in the second term causes $v^\pm$ to not be properly ordered for fixed $\kappa$.

\begin{proof}
  For this proof let
  $$v_0(\sigma) = \bry(\kappa\sigma), \qquad  v_1(\sigma) =\cry(\kappa\sigma)/\kappa^2.$$
  We deal with the case of the subsolution, given by
  $$
  v_{\inner}^- = v_0(\sigma) + (1+\epsilon)v_1(\sigma) \theta \theta_t.
  $$
  Copying equation (\ref{riccievotip}), Ricci flow is given by $\dopin[v] = 0$, where $\dopin[v]$ is
  $$\dopin[v] = \theta\theta_t (\theta v_\theta - \sigma v_\sigma) - \fops[v]$$
  
  We calculate
  \begin{align}
    \dopin[v_{\inner}^-]
    &= (1+\epsilon)(\theta \theta_t) \theta (\theta \theta_t)_t v_1
    - (\theta \theta_t) \sigma v_{0, \sigma}
    - (\theta \theta_t)^2 \sigma v_{1,\sigma} \notag\\
    &\phantom{{}=}-\fops[v_0 + (1+\epsilon)\theta \theta_t v_1] \notag\\
    &= (1+\epsilon)(\theta \theta_t) \theta (\theta \theta_t)_t v_1
    - (\theta \theta_t) \sigma v_{0, \sigma}
    - (\theta \theta_t)^2 \sigma v_{1,\sigma}\notag\\
    &\phantom{{}=}- \fops[v_0] - (1+\epsilon)(\theta \theta_t)\dfop{v_0}{v_1} - (1+\epsilon)^2(\theta \theta_t)^2 \qop[v_1] \label{dopincalc}
  \end{align}
  Recall that $v_0$ and $v_1$ were chosen in Section \ref{sec:innerformal} so that the constant-in-time term and the $(\theta \theta_t)$ term of
  $$\dopin[v_0 + v_1 (\theta \theta_t)]$$
  are zero, i.e. so that
  $$\fops[v_0] = 0, \qquad  \dfop{v_0}{v_1} = -\sigma v_{0, \sigma} .$$
  Thus we cancel these terms in (\ref{dopincalc}) and arrive at
  \begin{equation}\label{dopincalc2}
    \dopin[v_{\inner}^-] =
    \epsilon  \sigma v_{0, \sigma} (\theta \theta_t)
    +  ((1+\epsilon) \theta v_1 - \sigma v_{1, \sigma} - (1+\epsilon)^2 \qops[v_1])(\theta \theta_t)^2
  \end{equation}
  Since $v_0$ is strictly decreasing, the first term, $\epsilon \sigma v_{0, \sigma}$,  has the correct sign.  We want to show that the first term dominates the rest, for small enough $t$. Since we are dealing with bounded $\sigma$, and the bound on $t$ may depend on the bound of $\sigma$, the only difficulty is at $\sigma = 0$; there the first term goes to 0.
  
  From the power expansion of $v_0$ at 0:
  $$v_0(\sigma) = 1 + b_2 \kappa^2\sigma^2 + o(\sigma^2), \qquad \text{for some } b_2 < 0,$$
  and since $v_0(\sigma)$ is strictly decreasing, we see that for $\sigma$ in $[0, \sigma_*]$ there is $C_{\sigma_*}>0$ such that
  $$ \sigma v_{0, \sigma} \leq - C_{\sigma_*}\sigma^2,
  \qquad \sigma \in [0, \sigma_*].
  $$
  (Since $\kappa$ is universally restricted to be in $[\frac{\kappa_0}{2}, 2\kappa_0]$ the onstant does not depend on $\kappa$.)
  Additionally, we can bound the spatial parts of the second term from above, with a similar bound.  Looking at the expansion of $\cry (\sigma)$ in Lemma \ref{cryexistence} we see that there is $C_{\sigma_*}'>0$ such that
  $$
  |v_1| + |\sigma v_{1, \sigma}| + |\qops[v_1]| \leq C_{\sigma_*}'\sigma^2,
  \qquad \sigma \in [0, \sigma_*]
  $$

  So applying these bounds to (\ref{dopincalc2}) we have
  $$
  \dopin[v_{\inner}^-]
  \leq
  -\epsilon C_{\sigma_*} (\theta \theta_t)
  +C_{\sigma_*}' (\theta \theta_t)^2
  $$
  from which we see that $v_{\inner}^-$ is a subsolution in the region desired if we choose $t_* = t_*(\sigma_*, \epsilon)$ small enough.
\end{proof}
\subsection{Gluing the Outer and Parabolic Barriers}
\label{sec:outer_para_glue}
In this section, we show that the gluing conditions for the outer and parabolic barriers, i.e. (\ref{paraoutglueconds}) and the corresponding inequalities for the subsolution, hold for certain choices of the constants in the barriers.  We outline the argument, taking the case of the supersolution as an example.  We show that we can choose $\rho_*, \gamma$ depending on $\epsilon, \delta$ so that
\begin{itemize}
\item $\lim_{\tau \to -\infty}v_{\out}^+(2\rho_*, \tau)/v_{\para}^+(2\rho_*, \tau) = 1$
\item The limit $\lim_{\tau \to -\infty}v_{\out}^+(\rho, \tau)/v_{\para}^+(\rho, \tau)$ is decreasing in $\rho$, for $\rho \in [\rho_*, 3\rho_*]$.
\end{itemize}
These facts show (\ref{paraoutglueconds}) holds in the limit $\tau \to -\infty$, and we choose $\tau_*$ small enough so the inequalities still hold for $\tau < \tau_*$.

\begin{theorem}\label{paraoutglue}
  With $v_{\out}^\pm$ and $v_{\para}^\pm$ defined as in Theorems \ref{outerbarriertheorem} and \ref{parabarriertheorem}, for any $\epsilon, \delta$ we can choose $\rho_*, \tau_*, \gamma_{\pm}$ so that $v_{\out}^\pm$ and $v_{\para}^\pm$ satisfy the gluing conditions
  $$
  v_{\para}^-(\rho_*, \tau) > v_{\out}^-(\rho_*,\tau),
  \qquad v_{\out}^-(3\rho_*, \tau) > v_{\para}^-(3\rho_*, \tau);
  $$
  $$v_{\para}^+(\rho_*, \tau) < v_{\out}^+(\rho_*, \tau), \qquad v_{\out}^+(3\rho_*, \tau) < v_{\para}^+(3\rho_*, \tau),$$
  for all $\tau < \tau_*$.
\end{theorem}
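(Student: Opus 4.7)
The plan is to compute, for each sign, the pointwise limit $R^\pm(\rho) := \lim_{\tau\to -\infty} v_{\out}^\pm(\rho,\tau)/v_{\para}^\pm(\rho,\tau)$; to show that $R^\pm$ is strictly monotone on $[\rho_*, 3\rho_*]$; and then to choose $\gamma_\pm \in (0,1)$ so that $R^\pm$ crosses $1$ strictly inside this interval.  A continuity-in-$\tau$ argument will then propagate these strict inequalities at $\tau = -\infty$ to the required strict inequalities for all $\tau \leq \tau_*$, yielding the four gluing conditions.

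First I would rewrite both barriers in the parabolic coordinates $(\rho,\tau)$ using $r = \rho e^{\tau/2}$, $t = e^\tau$.  A direct computation of $\fop$ on $w_\pm := (1\pm\delta)r^{2b}$ gives
\[
\fop[w_\pm] \;=\; (1\pm\delta)\,a(1+b)\,r^{2b-2} \,+\, (1\pm\delta)^2(2b^2-4b-a)\,r^{4b-2},
\]
so that
\[
v_{\out}^\pm \;=\; (1\pm\delta)\rho^{2b}e^{b\tau}\bigl[1 + (1\pm\epsilon)a(1+b)\rho^{-2}\bigr] + (1\pm\epsilon)(1\pm\delta)^2(2b^2-4b-a)\rho^{4b-2}e^{2b\tau},
\]
\[
v_{\para}^\pm \;=\; (1\pm\gamma_\pm)\rho^{2b}(1+a\rho^{-2})^{1+b}e^{b\tau} \pm D\rho^{-4}e^{2b\tau}.
\]
On the compact range $[\rho_*, 3\rho_*]$ the $e^{2b\tau}$ pieces are uniformly subdominant, so
\[
R^\pm(\rho) \;=\; \frac{1\pm\delta}{1\pm\gamma_\pm}\,h_{\pm\epsilon}(\rho), \qquad h_c(\rho) := \frac{1 + ca(1+b)\rho^{-2}}{(1+a\rho^{-2})^{1+b}},
\]
evaluated at $c = 1\pm\epsilon$.

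Next I would analyze $h_c$.  Substituting $u = \rho^{-2}$, a short computation yields
\[
\frac{dh_c}{du} \;=\; \frac{(1+b)a\bigl[(c-1) - cab u\bigr]}{(1+au)^{2+b}}.
\]
For $c = 1-\epsilon$ this is negative for every $u > 0$, so $h_{1-\epsilon}$ is strictly increasing in $\rho$ on $(0,\infty)$; since $h_{1-\epsilon}(\infty) = 1$ we have $h_{1-\epsilon}(\rho) < 1$ for every finite $\rho$.  For $c = 1+\epsilon$ the derivative in $u$ is positive precisely for $u < \epsilon/[ab(1+\epsilon)]$, equivalently $\rho^2 > ab(1+\epsilon)/\epsilon$; on that range $h_{1+\epsilon}$ is strictly decreasing in $\rho$, and a Taylor expansion shows $h_{1+\epsilon}(\rho) \to 1$ from above as $\rho \to \infty$.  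Since Theorem \ref{outerbarriertheorem} only imposes $\rho_* \geq C_{r_*}/\sqrt{\epsilon}$, I enlarge $C_{r_*}$ if necessary so that also $\rho_*^2 > ab(1+\epsilon)/\epsilon$; then $h_{1+\epsilon}$ is strictly decreasing and strictly $> 1$ throughout $[\rho_*, 3\rho_*]$.

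With the monotonicity in hand, I set $\gamma_+ := (1+\delta)\,h_{1+\epsilon}(2\rho_*) - 1$ and $\gamma_- := 1 - (1-\delta)\,h_{1-\epsilon}(2\rho_*)$; both lie in $(0,1)$ for $\epsilon,\delta$ small.  These choices normalize $R^\pm(2\rho_*) = 1$, so strict monotonicity gives $R^+(\rho_*) > 1 > R^+(3\rho_*)$ and $R^-(\rho_*) < 1 < R^-(3\rho_*)$.  Because the $O(e^{2b\tau})$ remainders vanish uniformly on $[\rho_*, 3\rho_*]$ as $\tau \to -\infty$, a sufficiently negative $\tau_*$ preserves the four strict inequalities for all $\tau \leq \tau_*$, which is exactly the claim of the theorem.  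The main technical hurdle is the uniform sign of $h_{1+\epsilon}'$ on the entire range, which forces us to push $\rho_* = C/\sqrt{\epsilon}$ past the unique critical point of $h_{1+\epsilon}$; this imposes an additional lower bound on the constant $C$, compatible with the outer-barrier step and consistent with the dependency chain in Figure \ref{cnst_dag}.
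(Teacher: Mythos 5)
Your proposal is correct and follows essentially the same route as the paper: you form the $\tau\to-\infty$ limit of the quotient $v_{\out}^\pm/v_{\para}^\pm$, normalize it to equal $1$ at $2\rho_*$ by the choice of $\gamma_\pm$, and use the monotonicity of the ratio $H$ (your $h_c$), including the same enlargement of the constant in $\rho_* = C/\sqrt{\epsilon}$ to get past the critical point in the supersolution case. The explicit derivative formula in $u=\rho^{-2}$ and the sign analysis match the paper's computation, so no further comment is needed.
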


\begin{proof}
  Write $\bar \epsilon, \bar \delta, \bar \gamma$ for $(1 \pm \epsilon), (1 \pm \delta), (1 \pm \gamma_\pm)$.

  Write both $v_{\para}^\pm$ and $v_{\out}^\pm$ in the parabolic coordinates:
  $$
  v_{\para}^{\pm} =
  \bar \gamma \rho^{2b}(1 + \alpha \rho^{-2})^{1+b}e^{b\tau}
  \pm \frac{D}{\rho^4}e^{2b\tau}
  $$
  \begin{align*}
    v_{\out}^{\pm} &=
    \bar \delta r^{2b} (1 + t r^{-2} (\bar \epsilon (b+1) a + O(r^{2b}))),
    \qquad r \to 0 \\
    &= \bar \delta \rho^{2b}(1 + \bar \epsilon(b+1) a \rho^{-2})e^{b\tau} + O(e^{2b\tau})
    \qquad \tau \to -\infty
  \end{align*}
  This makes it clear that
  \begin{align*}
    \lim_{\tau \to -\infty}\frac{v_{\out}^\pm(\rho, \tau)}{v_{\para}^\pm(\rho, \tau)}
    &=
    \frac{\bar \delta}{\bar \gamma}
    \frac{1 + \bar \epsilon (1 + b) a \rho^{-2}}{(1 + a \rho^{-2})^{1+b}} \\
    &\eqdef
    \frac{\bar \delta}{\bar \gamma}
    H(\rho)
  \end{align*}
  We choose $\bar \gamma = \bar \gamma(\epsilon, \delta, \rho_*)$ to be
  $$\bar \gamma = \bar \delta H(2 \rho_*),$$
  which, recalling $\bar \gamma = (1 \pm \gamma)$ and $\bar \delta = (1 \pm \delta)$, means
  $$\gamma_\pm = \pm ((1 \pm \delta)H(2\rho_*) - 1)$$
  and therefore
  $$ \lim_{\tau \to -\infty}\frac{v_{\out}^\pm(2\rho_*, \tau)}{v^\pm_{\para}(2\rho_*, \tau)} = 1,$$
  as foretold in the outline before the proof.

  However, we must show that both $\gamma_+$ and $\gamma_-$ are positive.  For this, it suffices to show that $H(2\rho_*)<1$ in the case of a subsolution, and $H(2\rho_*)>1$ in the case of a supersolution.  Figure \ref{figure:Hrho} has the graph of $H(\rho)$.  Note that
  $$\lim_{\rho \to \infty}H(\rho) = 1.$$
  Computing the derivative of $H(\rho)$ shows that $H$ increasing is equivalent to
  $$a (1 \pm \epsilon) \rho^{-2} \geq \pm \epsilon$$
  In the case of the subsolution this shows that $H(\rho)$ is strictly increasing, so indeed $H(2\rho_*)<1$.  In the case of a supersolution, $H(\rho)$ is strictly decreasing as long as $\rho > \frac{\sqrt{ab(\epsilon + 1)}}{\sqrt{\epsilon}}$.  Thus by possibly increasing the constant $C$ in the definition of $\rho_*$ in Theorem \ref{outerbarriertheorem}, we can ensure that $H(\rho)$ is strictly decreasing on $[\rho_*, \infty)$.  Thus $H(2\rho_*)>1$ in the case of a supersolution.  These considerations show that both $\gamma_+$ and $\gamma_-$ are positive.

\begin{figure}
  \centering
  \includegraphics[scale=1]{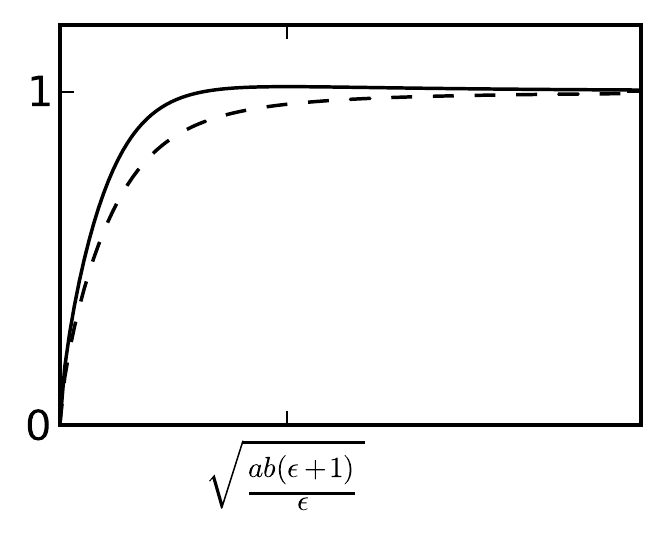}
  \caption{$H(\rho)$ for the case of a supersolution (solid) and subsolution (dashed).}
  \label{figure:Hrho}
\end{figure}
  
  Finally, because $H(2\rho_*)$ is increasing on $[\rho_*, 3\rho_*]$ in the case of a subsolution, and decreasing on $[\rho_*, 3\rho_*]$ in the case of a supersolution, the inequalities in the statement of the theorem hold at $\tau = -\infty$, and we can choose $\tau_*$ small enough so that they hold for $\tau < \tau_*$.
\end{proof}

\subsection{Gluing the Parabolic and Inner Barriers}
\label{sec:para_inner_glue}
In this section, we show the gluing conditions for the parabolic and inner barriers.  The proof is similar to the one for the gluing conditions for the outer and parabolic barriers.  Instead of considering the quotient of the barriers, we consider the difference.

\begin{theorem}\label{inoutglue}
  For any $\rho_*, \tau_*, \gamma_\pm$, we can choose $D, \sigma_* > 0$,  $\kappa_{\pm} \in [\kappa_0/2, 2\kappa_0]$, and find $t_*$ small so that
  $v_\para^\pm$ and $v_\inner^\pm$ satisfy the gluing conditions
   $$
  v_{\inner}^-(\sigma_*, t) > v_{\para}^-(\sigma_*,t),
  \qquad v_{\para}^-(3\sigma_*, t) > v_{\inner}^-(3\sigma_*, t);
  $$
  $$v_{\inner}^+(\sigma_*, t) < v_{\para}^+(\sigma_*, t), \qquad v_{\para}^+(3\sigma_*, t) < v_{\inner}^+(3\sigma_*, t),$$
  for $t < t_*$.
\end{theorem}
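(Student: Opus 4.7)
The strategy parallels the outer-parabolic gluing proof but uses differences $\Delta^\pm := v_{\para}^\pm - v_{\inner}^\pm$ in place of ratios, as announced in the section preamble. I sketch the supersolution case; the subsolution is analogous.

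First, I would rewrite $v_{\para}^+$ in the inner coordinates. Using $\rho^{-2}e^{b\tau} = \sigma^{-2}$, $\rho^{-4}e^{2b\tau} = \sigma^{-4}$, and $\rho^2 = \sigma^2 t^b$, one obtains
$$v_{\para}^+(\sigma, t) = (1+\gamma_+)\sigma^{-2}(a+\sigma^2 t^b)^{1+b} + D\sigma^{-4},$$
which at $t = 0$ reduces to $(1+\gamma_+)a^{1+b}\sigma^{-2} + D\sigma^{-4}$, while $v_{\inner}^+(\sigma, 0) = \bry(\kappa_+\sigma)$. I would then invoke a refinement of (\ref{bryantinf}), namely $\bry(\sigma) = \sigma^{-2} + O(\sigma^{-4})$ as $\sigma \to \infty$, which can be extracted from the Bryant ODE analyzed in Appendix C of \cite{ACK}. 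This gives $\bry(\kappa_+\sigma) = \kappa_+^{-2}\sigma^{-2} + O(\sigma^{-4})$.

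The key step is to tune $\kappa_+$ so that the inner barrier's leading $\sigma^{-2}$ coefficient exceeds the parabolic one by exactly $D/(4\sigma_*^2)$; that is,
$$\kappa_+^{-2} = (1+\gamma_+)a^{1+b} + \frac{D}{4\sigma_*^2}.$$
For $D$ and $\gamma_+$ fixed and $\sigma_*$ large, this keeps $\kappa_+ \in [\kappa_0/2, 2\kappa_0]$. With this choice,
$$\Delta^+(\sigma, 0) = \sigma^{-2}\left[\frac{D}{\sigma^2} - \frac{D}{4\sigma_*^2} + E(\sigma)\right],$$
where $|E(\sigma)| \leq K/\sigma^2$ for a constant $K$ coming from the Bryant remainder. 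At $\sigma = \sigma_*$ the bracket equals $3D/(4\sigma_*^2) + E(\sigma_*)$, strictly positive once $D > 4K/3$; at $\sigma = 3\sigma_*$ it equals $-5D/(36\sigma_*^2) + E(3\sigma_*)$, strictly negative under a similar lower bound on $D$. So I would fix $D$ large enough first, then $\sigma_*$ large. For the subsolution I instead take $\kappa_-^{-2} = (1-\gamma_-)a^{1+b} - D/(4\sigma_*^2)$: the leading mismatch flips sign, the $-D\sigma^{-4}$ term in $v_{\para}^-$ cooperates rather than opposes, and the same calculation yields $\Delta^-(\sigma_*, 0) < 0 < \Delta^-(3\sigma_*, 0)$.

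Finally, to extend from $t = 0$ to $0 < t < t_*$, observe that on $\sigma \in [\sigma_*, 3\sigma_*]$ both $v_{\para}^\pm(\sigma, t) - v_{\para}^\pm(\sigma, 0)$ and the inner time-correction $(1 \mp \epsilon)\kappa_\pm^{-2}\cry(\kappa_\pm\sigma)\theta\theta_t$ are $O(t^b)$, whereas the $t=0$ gap is bounded below by a positive constant of order $D/\sigma_*^4$; choosing $t_*$ small (depending on $D$ and $\sigma_*$) preserves the strict inequalities. The main obstacle I anticipate is the control of the Bryant remainder $E$: the argument genuinely needs $\bry(\sigma) - \sigma^{-2} = O(\sigma^{-4})$, which is sharper than (\ref{bryantinf}) and must be produced from the ODE for $\bry$ rather than quoted directly.
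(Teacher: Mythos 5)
Your proof is correct and follows essentially the same route as the paper's: pass to the $t\to 0$ limit in the $\sigma$ variable, tune $\kappa_\pm$ so that the leading $\sigma^{-2}$ coefficients of the two barriers differ by an amount of order $D\sigma_*^{-2}$ lying strictly between $D(3\sigma_*)^{-2}$ and $D\sigma_*^{-2}$, absorb the $O(\sigma^{-4})$ Bryant remainder by taking $D$ large, and finally shrink $t_*$. The only notable differences are cosmetic improvements: you retain the $(1\pm\gamma_\pm)$ factor in the parabolic barrier (which the paper's computation of $\tilde H$ silently drops) and you explicitly flag the need for the sharper asymptotic $\bry(\sigma)=\sigma^{-2}+O(\sigma^{-4})$, which the paper also relies on implicitly through its bounded function $h$.
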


\begin{proof}
  The limit of $v_\inner^\pm$, as $t \to 0$ (for fixed $\sigma$) is
  $$\bry(\kappa_{\pm}\sigma) = (\kappa_{\pm}\sigma)^{-2} + O((\kappa_{\pm}\sigma)^{-4}).$$
  Since we're demanding that $\kappa_{\pm}$ lie in the closed interval $[\frac{\kappa_0}{2}, 2\kappa_0]$, this means that we can write
  $$\lim_{t \to 0}v_\inner^\pm = (\kappa_{\pm} \sigma)^{-2} + C h(\sigma)\sigma^{-4}$$
  where $|h(\sigma)| < 1$

  Written in terms of  $(\sigma, t)$, the barriers $v_{\para}^\pm$ are
  $$v_{\para}^\pm = \sigma^{-2}(\sigma^2t^{-b} + a)^{1+b} \pm D \sigma^{-4}$$
  which, in the limit $t \to 0$ are
  $$(\kappa_0\sigma)^{-2} \pm D \sigma^{-4}$$
  where we recall that $\kappa_0 = a^{-(b+1)/2}$.
  
  So we have
  \begin{equation}
    \label{glueInnerLimit}
    \tilde H(\sigma) \defeq \lim_{t \to 0}\sigma^2(v_\para^\pm - v_\inner^\pm) = \kappa_0^{-2} - \kappa_{\pm}^{-2} \pm (D\mp Ch(\sigma))\sigma^{-2}.
  \end{equation}
  
  Now, force $D > 2C$ so that $\frac{D}{2} < (D \mp Ch(\sigma)) \leq \frac{3D}{2}$.  For the rest of the proof we take the case of a supersolution.  Calculate
  $$
  \tilde H(\sigma_*) \geq \kappa_0^2 - \kappa_+^2 + \frac{D}{2}\sigma_*^{-2}
  $$
  $$
  \tilde H(3\sigma_*) \leq \kappa_0^2 - \kappa_+^2 + \left(\frac{3D}{2}\right)(3\sigma_*)^{-2}
  = \kappa_0^2 - \kappa_+^2 + \frac{D}{6}\sigma_*^{-2}
  $$
  So we see we can choose $\kappa_+ = \kappa_+(D, \sigma_*) > \kappa_0$ so that $H(3\sigma_*) < 0 < H(\sigma_*)$: choose it so that $\kappa_0^2-\kappa_+^2 = -\frac{D}{3}\sigma_*^{-2}$.  However, we must first choose $\sigma_*$ large enough (depending on $D$) so that the resulting $\kappa_+$ lands in the interval $[\kappa_0, 2\kappa_0]$.

  Finally choose $t_*$ small enough so that the inequalities hold for $t < t_*$.

\end{proof}
  
\section{Bounds Yielding Convergence of Regularized Solutions}
\label{sec:conv}

In this section we find regularizations of the initial metric and show that they converge to a solution to Ricci flow coming out of our singular initial data.  In section \ref{section:maxprinciples} we give the maximum principle we will use to show that the regularizations stay within our barriers.  We also construct a special barrier, which is used to prove the part of Theorem \ref{maintheorem} which claims that the solution must stay compact.  In section \ref{section:curvaturebnds} we find bounds which any solution within the barriers must satisfy. In section \ref{section:regularizations} we construct modifications of the initial metric, and in section \ref{section:convergencefinal} we explain why and how the regularizations converge.

\subsection{Maximum Principles}\label{section:maxprinciples}
In \cite{ACK}, the authors state and prove the following maximum principle.
\begin{lemma}\label{maxprinc}
  Let $v^-$ and $v^+$ be nonnegative, sub- and super-solutions of $v_t = \fop[v]$, that is:
  $$\left( \pars{}{t} - \fop \right) v^+ \geq 0 \qquad \left( \pars{}{t} - \fop \right) v^- \leq 0$$
  Here $v^-$ may be the maximum of smooth subsolutions, and $v^+$ may be the minimum of smooth supersolutions, which satisfy the gluing conditions.

  Assume either $v^-$ or $v^+$ satisfies
  $$v_{rr} \leq C, \qquad -\frac{v_r}{2r} \leq C, \qquad \frac{1-v}{r^2} \leq C$$
  for some $C<\infty$, on $\Xi = [0, \bar r] \times [0, \bar t]$.

  If $v^-(\bar r, t) \leq v^+(\bar r, t)$ and $v^-(0, t) \leq v^+(0, t)$ hold for $0 \leq t \leq \bar t$, and $v^-(r, 0) \leq v^+(r, 0)$ holds for $0 \leq r \leq \bar r$,  then $v^- \leq v^+$ holds throughout $\Xi$.
\end{lemma}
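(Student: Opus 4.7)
The plan is a weighted parabolic maximum principle applied to $w = v^- - v^+$. Subtracting the sub- and super-solution inequalities gives $w_t \leq \fop[v^-] - \fop[v^+]$, and I linearize the right-hand side (directly, or by integrating $d\fop_{v_\theta}[w]$ along $v_\theta = \theta v^- + (1-\theta)v^+$) to obtain $w_t - \bar L w \leq 0$ for a linear operator $\bar L$. In the expansion of $\fop[v^-]-\fop[v^+]$ there is a choice in how to split bilinear products like $v v_{rr}$; I take the split so that the coefficient of $w$ collects the second-derivative and curvature-type quantities of whichever of $v^\pm$ is assumed to satisfy the hypothesized bounds, while the coefficient of $w_{rr}$ becomes the \emph{other} of $v^\mp$, which is $\geq 0$. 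Then $\bar L$ is degenerate parabolic, and the bounds $v_{rr}\leq C$, $-v_r/(2r)\leq C$, $(1-v)/r^2\leq C$ translate directly into an upper bound $C_0 < \infty$ on the zeroth-order coefficient of $\bar L$ (the only potentially dangerous leftover term is $-2(n-1)v^{\mp}/r^2$, which is $\leq 0$ because $v^\mp \geq 0$).

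With $C_0$ in hand, set $\tilde w \defeq e^{-\Lambda t}w$ for $\Lambda > C_0$. The inequality becomes $\tilde w_t + \Lambda \tilde w - \bar L \tilde w \leq 0$. If $\tilde w$ attained a positive maximum at an interior point $(r_0,t_0)\in(0,\bar r)\times(0,\bar t]$, then $\tilde w_t \geq 0$, $\tilde w_r = 0$, $\tilde w_{rr}\leq 0$ there, and the inequality would force $(\Lambda - C_0)\tilde w(r_0,t_0) \leq 0$, contradicting $\tilde w(r_0,t_0)>0$. Hence $\tilde w$ attains its supremum on the parabolic boundary, where the hypotheses give $\tilde w \leq 0$; exponentiating, $w \leq 0$ throughout $\Xi$.

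Two points require extra care. First, the drift coefficient $(n-1-v^\mp)/r$ is singular at $r=0$, but $r=0$ is a Dirichlet boundary, so the interior-max step never evaluates $\bar L$ there; to remove any remaining subtlety I would subtract $\eta\phi$ for a smooth strict supersolution $\phi>0$ of $(\partial_t - \bar L)$ (for instance $\phi = e^{\mu t}(1+r^2)$ with $\mu$ large), rule out interior maxima of $\tilde w - \eta\phi$, and send $\eta\searrow 0$. Second, the case in which $v^-$ is a maximum of smooth subsolutions (and $v^+$ a minimum of smooth supersolutions) is handled by the standard observation that a max of subsolutions remains a subsolution in the viscosity sense, and at any interior maximum of $\tilde w$ one may pass to a smooth pair of sub/supersolutions and apply the argument above. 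The main obstacle is the bookkeeping in the linearization: one must verify that the only $1/r^2$ term that survives with a possibly bad sign is exactly the $2(n-1)(1-v)/r^2$ piece, which is precisely what the hypothesized bound on $L=(1-v)/r^2$ controls.
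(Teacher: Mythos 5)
Your argument is correct and is essentially the intended one: the paper does not prove this lemma itself but defers to \cite{ACK}, whose proof is exactly this linearization of $\fop$ along the pair $(v^-,v^+)$ followed by a weighted maximum principle, with the three hypothesized one-sided bounds controlling precisely the three potentially unbounded pieces ($v_{rr}$, $-v_r/r$, and $(1-v)/r^2$) of the zeroth-order coefficient of the linearized operator. Your choice of splitting --- putting the nonnegative factor $v^{\mp}$ on $w_{rr}$ and the quantities of the bounded solution $v^{\pm}$ on $w$ --- together with your treatment of the Dirichlet edge at $r=0$ and of the min/max of smooth pieces via the gluing conditions, matches the cited proof.
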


We need a way to verify the inequality on the right edge of the domain $r = \bar r$ in the hypothesis of the theorem above.  To do this, we will use the barriers in the following lemma from \cite{ACK}.  We call these barriers the collars, and they give us a way to connect what happens close to the North Pole with what happens in the rest of the manifold.

\begin{lemma}\label{collarlemma}
  For any $0 < m < 1$, $\bar r >0$, and small enough $\alpha > 0$, there exists $T = T(\alpha) >0$ such that for $0 < t < T(\alpha)$, the functions
  $$v_{\text{col}}^-(r, t) =
  \max
  \left\{
  0, m_-e^{-t/\alpha^2} - \left( \frac{r-\bar r}{\alpha} \right) ^2
  \right\}
  $$
  $$v_{\text{col}}^+(r, t) =
  \min
  \left\{
  1, m_+e^{3t/\alpha^2} + \left( \frac{r-\bar r}{\alpha} \right) ^2
  \right\}
  $$
  are sub- and super-solutions of $v_t = \fop[v]$, respectively.
\end{lemma}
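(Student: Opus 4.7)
The plan is a direct pointwise verification, reducing to the smooth parabolic pieces. First I would observe that both $v \equiv 0$ and $v \equiv 1$ are stationary solutions of $v_t = \fop[v]$, since every term of $\fop$ vanishes on these constants. In the weak sense admitted by Lemma \ref{maxprinc}, taking $\max$ with the subsolution $0$ and $\min$ with the supersolution $1$ preserves the sub-/super-solution property, so I only need to verify the differential inequality on the open set where $0 < v_{\text{col}}^\pm < 1$. On that set $v_{\text{col}}^\pm$ coincides with the smooth profile $\phi_\pm(r, t) = m_\pm e^{\pm \beta t/\alpha^2} \pm (r - \bar r)^2/\alpha^2$ (with $\beta = 3$ for $\phi_+$ and $\beta = 1$ for $\phi_-$), and the constraint $0 < \phi_\pm < 1$ confines us to $|r - \bar r| \leq \alpha$, which is the key quantitative input.

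\textbf{Main computation.} Next I would compute $\partial_r \phi_\pm = \pm 2(r - \bar r)/\alpha^2$ and $\partial_{rr} \phi_\pm = \pm 2/\alpha^2$, then substitute into $(\partial_t - \fop)\phi_\pm$ and reorganize into three groups: the $O(\alpha^{-2})$ contributions from $\partial_t$, from $\phi \phi_{rr}$, and from $-\tfrac{1}{2}\phi_r^2$, which combine into a single leading residue whose sign is fixed by the choice of the exponent $\beta$; the drift $\frac{n-1-\phi}{r}\phi_r$, of order $\alpha^{-1}$ because $|r - \bar r| \leq \alpha$; and the reaction $\frac{2(n-1)}{r^2}\phi(1-\phi)$, of order $1$ because $0 \leq \phi \leq 1$. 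Requiring $\alpha \leq \bar r/2$ makes $r \geq \bar r/2$ throughout, so the drift and reaction errors are bounded by constants depending only on $n$ and $\bar r$, times $\alpha^{-1}$ and $1$ respectively. Choosing $T(\alpha)$ small enough to keep $e^{\pm \beta t/\alpha^2}$ within a constant factor of $1$ on $[0, T(\alpha)]$ preserves the leading $m/\alpha^2$ dominance, which overwhelms the lower-order errors for $\alpha$ sufficiently small.

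\textbf{Main obstacle.} The hardest step is handling the drift term, whose sign is indefinite in $r - \bar r$: on one side of $\bar r$ it works against the leading residue rather than with it. Since it is of order $\alpha^{-1}$ rather than $\alpha^{-2}$, it is dominated uniformly across $|r - \bar r| \leq \alpha$ once $\alpha$ is small compared to $\bar r$, but managing the signs of the leading cancellations carefully is where the specific coefficients in the exponents become essential -- the asymmetry between the growth rate $3$ for the supersolution and the decay rate $1$ for the subsolution arises exactly from balancing the $\phi \phi_{rr}$ term, which contributes with opposite signs for the two profiles. With these estimates in hand, $(\partial_t - \fop)v_{\text{col}}^+ \geq 0$ and $(\partial_t - \fop)v_{\text{col}}^- \leq 0$ both follow in their smooth regions, and the cap handles the rest by the observations in the first paragraph.
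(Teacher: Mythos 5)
Your reduction is sound as far as it goes: $0$ and $1$ are static sub-/super-solutions, the max/min construction is admissible for Lemma \ref{maxprinc}, the constraint $0<\phi_\pm<1$ does confine you to $|r-\bar r|\leq\alpha$, and the drift and reaction terms are indeed $O(\alpha^{-1})$ and $O(1)$ once $\alpha\leq\bar r/2$. The supersolution half also closes: with $A=m_+e^{3t/\alpha^2}$ and $B=((r-\bar r)/\alpha)^2$ one has $\phi_+\phi_{+,rr}-\tfrac12\phi_{+,r}^2=\tfrac{2}{\alpha^2}(A+B)-\tfrac{2}{\alpha^2}B=\tfrac{2A}{\alpha^2}$, so $(\partial_t-\fop)\phi_+=\tfrac{(3-2)A}{\alpha^2}+O(\alpha^{-1})>0$. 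The gap is in the subsolution half, and your heuristic about the ``asymmetry between $3$ and $1$'' is exactly where the sign error hides. Running the same bookkeeping for $\phi_-=A-B$ with $A=m_-e^{-\beta t/\alpha^2}$: now $\phi_{-,rr}=-2/\alpha^2$, so $\phi_-\phi_{-,rr}-\tfrac12\phi_{-,r}^2=-\tfrac{2}{\alpha^2}(A-B)-\tfrac{2}{\alpha^2}B=-\tfrac{2A}{\alpha^2}$, while $\partial_t\phi_-=-\tfrac{\beta A}{\alpha^2}$. Hence
$$(\partial_t-\fop)\phi_-=\frac{(2-\beta)A}{\alpha^2}+O(\alpha^{-1}),$$
and \emph{three} signs flip relative to the supersolution case (the sign of $\phi_{rr}$, the sign of $\partial_t\phi$, and the required sign of the residue), so the constraint on the exponent is the same for both barriers, namely $|\beta|>2$ --- there is no asymmetry. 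With $\beta=1$ the leading residue is $+\tfrac{A}{\alpha^2}$, the wrong sign for a subsolution.

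Concretely, at $(r,t)=(\bar r,0)$ with $n=2$, $\bar r=1$, $m_-=\tfrac12$, $\alpha=\tfrac1{10}$: $\partial_t v^-_{\col}=-50$ while $\fop[v^-_{\col}]=\tfrac12\cdot(-200)+\tfrac{2}{1}\cdot\tfrac12\cdot\tfrac12=-99.5$, so $(\partial_t-\fop)v^-_{\col}=+49.5$. The downward parabola forces a genuine solution to drop at rate $\approx 2m_-/\alpha^2$, and $e^{-t/\alpha^2}$ only drops at rate $m_-/\alpha^2$; no choice of $T(\alpha)$ or smallness of $\alpha$ repairs this, since the bad term is $O(\alpha^{-2})$ and everything you could throw against it is $O(\alpha^{-1})$. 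The computation forces a decay exponent $\beta>2$ (e.g.\ $e^{-3t/\alpha^2}$, mirroring the supersolution), and it is only then that the restriction $t<T(\alpha)$ acquires its purpose: the good term $\tfrac{(\beta-2)m_-}{\alpha^2}e^{-\beta t/\alpha^2}$ decays and must still dominate the $O(\alpha^{-1})$ drift, which requires $e^{-\beta t/\alpha^2}\gtrsim\alpha$. (Your ``keep the exponential within a constant factor of $1$'' remark is vacuous for the growing supersolution and is really only doing work here.) So either you have mis-signed the leading residue, or the decay exponent in the displayed $v^-_{\col}$ must be increased; in either case the proof as written does not establish the subsolution half of the lemma, and you should redo the $O(\alpha^{-2})$ cancellation for $\phi_-$ explicitly rather than inferring its sign from the supersolution case.
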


The following lemma is the analogue to Lemma 2 of \cite{ACK}.  It allows us to conclude that there are no noncompact solutions coming out of the singularity, as in Theorem 2 of \cite{ACK}.  The main reason is that near the North Pole, the arclength between $r = r_1$ and $r = r_2$ is given by $\int_{r_1}^{r_2}v(r)^{-1/2}dr$, which is integrable at $r=0$ if we have the lower bound $v \geq r^{2b_*}$. Notice the hypotheses do not impose boundary conditions at $r=0$, only that the equation is satisfied in the interior, so we can apply it to a solution with $v(r, 0) = v_{\init}$.
\begin{lemma}\label{cptnesssubsoln}
  Let $b_* \in (1/2, 1)$.  Let $v$ be a nonnegative smooth function satisfying 
  $$\left(\pars{}{t} - \fop\right)v \geq 0$$
  on $(0, r_0] \times [0, t_0]$ and
  $$v(r, 0) > r^{2b_*}$$
  for $r \in (0, r_0]$.  Then $v \geq r^{2b_*}$ on $(0, r_1) \times [0, t_1]$ for some small $r_1, t_1$.
\end{lemma}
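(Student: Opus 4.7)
The strategy is to compare $v$ with the time-independent subsolution $w(r) = r^{2b_*}$ via a maximum principle. A direct computation (of the same type as in the outer-region analysis of Section~\ref{sec:formal}) gives
\[
\fop[r^{2b_*}] = a(1+b_*)r^{2b_*-2} + (2b_*^2 - 4b_* - a)r^{4b_*-2}.
\]
Since $b_* < 1$, the first term (positive) dominates as $r \searrow 0$, so for some $r_1 \in (0, r_0]$ we have $\fop[w] > 0$ on $(0, r_1]$, making $w$ a strict subsolution there. The strict initial inequality $v(r_1,0) > r_1^{2b_*}$ together with continuity of $v$ at $(r_1,0)$ allows us to pick $t_1 > 0$ so that $v(r_1, t) > w(r_1)$ for $t \in [0, t_1]$. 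At the opposite boundary $r = 0$, $v$ need not be defined, but we have the easy inequality $u \defeq v - w \geq -w = -r^{2b_*}$, coming from $v \geq 0$.

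The comparison on $(0, r_1] \times [0, t_1]$ proceeds by contradiction. If $\inf u = -m < 0$, then at any sequence approaching the infimum, the inequality $u \geq -r^{2b_*}$ forces $r_k^{2b_*} \geq m - o(1)$, hence $r_k \geq (m/2)^{1/(2b_*)}$ for $k$ large; by compactness the infimum is then attained at some $(r_*, t_*)$ in the interior-in-$r$ subdomain $[(m/2)^{1/(2b_*)}, r_1] \times [0, t_1]$. The boundary hypotheses rule out $t_* = 0$ and $r_* = r_1$, so at $(r_*, t_*)$ one has $u_r = 0$, $u_{rr} \geq 0$, $u_t \leq 0$, which combined with the supersolution hypothesis $v_t \geq \fop[v]$ yields $\fop[v](r_*, t_*) \leq u_t \leq 0$.

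To produce a contradiction I would evaluate $\fop[v]$ at $(r_*, t_*)$ using the translations $v = w - m \in [0, r_*^{2b_*}]$, $v_r = w_r = 2b_* r_*^{2b_*-1}$, and $v_{rr} \geq w_{rr} = 2b_*(2b_*-1)r_*^{2b_*-2} > 0$ (this positivity is where the hypothesis $b_* > 1/2$ is used). Two of the four terms in $\fop[v]$ are then manifestly nonnegative, namely $v v_{rr} \geq 0$ and $2(n-1)v(1-v)/r^2 \geq 0$ (since $0\leq v\leq r_1^{2b_*}<1$), while the remaining two combine to
\[
(n-1-v)\frac{v_r}{r_*} - \frac{v_r^2}{2} = \frac{v_r}{r_*}\Big((n-1-v) - \tfrac{1}{2}r_* v_r\Big) \geq b_*(n-1) r_*^{2b_*-2},
\]
provided $r_1$ is small enough that the corrections $v$ and $r_* v_r/2$ are negligible compared to $n-1$. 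This gives $\fop[v](r_*, t_*) > 0$, contradicting $\fop[v] \leq 0$.

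The main obstacle is setting up a comparison at all: the PDE is degenerate at $r=0$, and Lemma~\ref{maxprinc} is not directly applicable since neither $v$ nor $w$ satisfies the curvature bounds there (indeed $w_{rr}$ and $(1-w)/r^2$ both blow up as $r\searrow 0$). The workaround is the nonnegativity bound $u \geq -w$, which automatically separates any would-be contact point from the apex and thereby reduces the problem to a purely interior first-minimum argument.
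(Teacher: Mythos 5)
Your proof is correct and follows essentially the same route as the paper: comparison against the explicit strict subsolution $r^{2b_*}$, with the apex $r=0$ handled by $v\geq 0$ together with $\bar v(0)=0$, the right edge $r=r_1$ handled by continuity in time, convexity of $r^{2b_*}$ (i.e.\ $b_*>1/2$) used to discard the $vv_{rr}$ term, and the sign of $\fop$ at the contact point giving the contradiction. The only difference is bookkeeping: the paper perturbs by $\epsilon(1+t)$ and argues at the first zero time, whereas you locate an attained global minimum via the quantitative bound $v-\bar v\geq -r^{2b_*}$; both implementations work.
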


\begin{proof}
  Let $\bar v(r) = r^{2b_*}$.  Choose $r_1 = \left( \frac{ab_*}{2b_*^2+b_*+a} \right) ^{1/4b_*}$, or $r_1 = r_0$ if that is smaller.  Demand that $t_1$ is small enough so that $\bar v(r_1) < v(r_1, t)$ for $0 \leq t \leq t_1$.
  For arbitrary $\epsilon > 0$ let
  $$f_\epsilon(r, t) = v(r, t) - \bar v (r)  + \epsilon (1+t).$$
  We prove $f_\epsilon(r, t) > 0$ for $0 < r < r_1$ and $0 < t < t_1$.  The lemma follows by sending $\epsilon $ to 0.

  From our demand on $t_1$ and our hypotheses, we have
  $$f_\epsilon(r, 0) \geq \epsilon, \qquad \text{ for all } r \in [0, r_1]$$
  and 
  $$f_\epsilon(0, t) \geq \epsilon, \qquad f_\epsilon(r_1, t) \geq \epsilon,  \qquad \text{ for all } t \in [0, t_1].$$
  Suppose for contradiction we have a first time $\bar t \in (0, t_1]$ where $f_\epsilon$ has a zero, and take $\bar r$ minimal such that $f(\bar r, \bar t) = 0$.

  Then, on one hand, since we have $(\pars{}{t} - \fop)v\geq 0$,
  \begin{equation}\label{cptnessineq:fopneg}
    0 \geq \pars{f_{\epsilon}}{t}(\bar r, \bar t) \geq \fop[v(\bar r, \bar t)] + \epsilon
  \end{equation}

  On the other hand, we have
  $$0 = \pars{f_{\epsilon}}{r}(\bar r, \bar t), \qquad 0 \leq \pars{^2f_{\epsilon}}{r^2}(\bar r, \bar t)$$
  so at the point $(\bar r, \bar t)$
  $$v < \bar v , \qquad v_r = \bar v_r, \qquad v_{rr} \geq \bar v_{rr}$$
  so using the formula (\ref{riccievo}) for $\fop[v]$, 
  \begin{align*}
    \bar r^2 \fop[v(\bar r, \bar t) ]
    &=
    \frac{a}{2}\bar rv_r + av
    + \bar r^2 v_{rr}v - \frac{1}{2}(\bar r v_r)^2
    - \bar r v_r v - av^2 \\
    &\geq
    \frac{a}{2}\bar r \bar v_r 
    - \frac{1}{2}(\bar r \bar v_r)^2
    - \bar r \bar v_r \bar v - a\bar v^2 \\
    &=
    abr^{2b}
    - (2b^2 + b + a)r^{4b}
  \end{align*}
  In the first inequality we used that $v$ and $v_{rr}v \geq \bar v_{rr} v$ are positive.  From the last line, we see from our choice of $r_1$ above that $\bar r \fop[v(\bar r, \bar t)] > 0$, contradicting (\ref{cptnessineq:fopneg}).
\end{proof}

\subsection{Curvature Bounds for Solutions within the Barriers}\label{section:curvaturebnds}
Lemmas \ref{curvaturebndfixedtime} and \ref{curvaturebndfixedr} let us understand the curvature of anything between the barriers created in Section \ref{sec:barriers}.  Lemma \ref{curvaturebndfixedtime} gives, in particular, an upper bound on curvature strictly away from $t=0$, and Lemma \ref{curvaturebndfixedr} gives an upper bound on curvature strictly away from $r=0$.   Recall that $K = \frac{-v_r}{2r}$ and $L=\frac{1-v}{r^2}$ are the sectional curvatures of the manifold.  The results of this section are entirely on the level of the PDE $v_t = \fop[v]$ on $(0, r_*) \times (0, t_*)$.

\begin{lemma}\label{curvaturebndfixedtime}
  Let $v(r, t)$ be a solution to $v_t = \fop[v]$ on $(0, r_*) \times (0, t_*)$.  Let $v^\pm$ be the barriers created in Section \ref{sec:barriers}.  Suppose that $v$ satisfies $v^- < v < v^+$, as well as the assumptions \ref{vbnd} and \ref{abnd} on the initial data, i.e. that
  $$|v(r, t)| < 1, \qquad \text{for }r>0,$$
  $$|r^2 (L-K)| = |(1-v)+\frac{1}{2}rv_r| < A.$$
  There is $C = C(A, r_*, t_*, \delta, \epsilon) < \infty$ such that
  $$
  \frac{1}{C} \frac{1}{t^{1+b}}
  \leq 
  \sup_{r \in (0, r_*)}|K| + |L|
  \leq
  C\frac{1}{t^{1+b}}
  $$
\end{lemma}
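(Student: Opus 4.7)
The plan is to prove the two bounds separately. The lower bound comes from a single-point evaluation of the upper barrier; the upper bound splits into the three spatial regions, of which the inner region is the crucial one.

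\emph{Lower bound.} Evaluate $L$ at $r_0 = \sigma_* t^{(1+b)/2}$, on the inner--parabolic interface. The inner upper barrier gives $v(r_0, t) \leq v_\inner^+(\sigma_*, t) \to \bry(\kappa_+ \sigma_*) < 1$ as $t \searrow 0$, so for $t$ sufficiently small $1 - v(r_0, t) \geq c_0 > 0$, whence $\sup_r L(\cdot, t) \geq L(r_0, t) \geq c_0/(\sigma_*^2 t^{1+b})$.

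\emph{Upper bound on $L$.} For $r \geq \sigma_* t^{(1+b)/2}$, the assumption $|v|<1$ gives $L \leq 2/r^2 \leq C/t^{1+b}$ immediately. For $r \leq \sigma_* t^{(1+b)/2}$ the naive bound fails, so rescale to $\sigma = r t^{-(1+b)/2}$; the inner lower barrier gives $v \geq \bry(\kappa_-\sigma) + O(\theta\theta_t)$, and since $1-\bry(\kappa_-\sigma) = O(\sigma^2)$ by (\ref{bryantzero}) and $\cry(\kappa_-\sigma) = O(\sigma^2)$ by Lemma \ref{cryexistence}, we obtain $1-v \leq C\sigma^2$ throughout the inner region, hence $L = (1-v)/r^2 \leq C/t^{1+b}$.

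\emph{Upper bound on $K$.} Outside the inner region, the preserved inequality $|r^2(L-K)|<A$ gives $|K| \leq |L| + A/r^2 \leq C/t^{1+b}$. Inside the inner region, $A/r^2$ blows up faster than $1/t^{1+b}$, so this approach fails, and I would instead apply interior parabolic regularity to a parabolically rescaled flow. Concretely, at each $t_0>0$ set $\tilde v(\sigma, s) = v(\sigma t_0^{(1+b)/2}, t_0(1+s t_0^{b}))$ on $\sigma \in [0, 2\sigma_*]$, $s \in [-\tfrac{1}{2}, 0]$. The barriers sandwich $\tilde v$ between two Bryant-like profiles bounded away from $0$, so the rescaled equation for $\tilde v$ is uniformly parabolic on this neighborhood. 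Standard interior $C^{2,\alpha}$ estimates (uniform in $t_0$) yield $|\tilde v_\sigma|, |\tilde v_{\sigma\sigma}| \leq C$ at $s=0$; together with the smoothness condition $\tilde v_\sigma(0,s)=0$ coming from the regularity of the Ricci flow at the North Pole, this gives $|\tilde v_\sigma(\sigma, 0)| \leq C\sigma$, whence $|K| = |\tilde v_\sigma|/(2\sigma t_0^{1+b}) \leq C/t_0^{1+b}$.

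\emph{Main obstacle.} The hard step is the inner-region bound on $K$: the barriers only control $v$ in $L^\infty$, but we need derivative control, and the naive bound from $|r^2(L-K)|<A$ is too weak. Upgrading to derivative control requires interior parabolic regularity applied to a time-dependent rescaling, and care is needed to ensure the Schauder constants remain uniform as $t_0 \searrow 0$. This is essentially the place where one exploits that the sandwich between the barriers forces $v$ to be $C^{2,\alpha}$-close, on the rescaled scale, to the smooth Bryant profile.
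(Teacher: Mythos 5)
Your structure — lower bound by evaluating the upper barrier at a fixed $\sigma$, upper bound on $L$ by using $|v|<1$ away from the tip and the inner-barrier asymptotics near it, upper bound on $K$ by $|r^2(L-K)|<A$ away from the tip and a rescaled parabolic estimate near it — matches the paper's decomposition almost exactly, and the first three pieces are correct.

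The one place you diverge substantively from the paper is the inner-region bound on $K$, and there is a genuine gap in your version. You rescale around the tip $\sigma=0$: set $\tilde v(\sigma,s)=v(\sigma t_0^{(1+b)/2}, t_0(1+st_0^b))$ on $[0,2\sigma_*]\times[-\tfrac12,0]$. The rescaled equation is
$$\tilde v_s = v\,\tilde v_{\sigma\sigma} - \tfrac12\tilde v_\sigma^2 + \tfrac{n-1-v}{\sigma}\tilde v_\sigma + \tfrac{a}{\sigma^2}v(1-v),$$
which has unbounded lower-order coefficients at $\sigma=0$; it is not ``uniformly parabolic'' there in the sense needed for standard interior Schauder estimates on a box touching $\sigma=0$. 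You patch this by invoking $\tilde v_\sigma(0,s)=0$ ``from the regularity of the Ricci flow at the North Pole,'' but the lemma's hypotheses only say $v$ solves the PDE on the \emph{open} domain $(0,r_*)\times(0,t_*)$ and lies between the barriers; they do not assume $v$ extends smoothly to $r=0$, so $v_r(0,t)=0$ is not available. (It does hold for the regularized flows to which the lemma is applied, but proving the lemma under those extra hypotheses would weaken it.) The paper avoids both issues by centering the rescaling at an arbitrary interior point $(\bar r,\bar t)$ of the inner region: it sets $w(x,y)=\bar\sigma^{-2}\bigl(1-v(\bar r(1+x),\bar t+\bar r^2 y)\bigr)$ on $[-\tfrac12,\tfrac12]^2$, so that the relevant $r$-range $[\bar r/2,3\bar r/2]$ stays away from $r=0$; the coefficients of the resulting equation for $w$ are then uniformly bounded and the leading coefficient $v$ is bounded below, so interior estimates give $|w_x(0,0)|\le C$, which unwinds to $|v_r(\bar r,\bar t)|\le C\bar\sigma^2/\bar r$ and hence $|K|\le C/\bar t^{1+b}$. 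If you replace your tip-centered rescaling with this point-centered one, the gap closes and the rest of your argument goes through.
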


\begin{remark}
  The exact upper bound $C \frac{1}{t^{1+b}}$ is not important for the following sections; but it is important that the bound does not depend on $v$.  We will not use the lower bound.  The purpose of the more exact estimates is to give the conclusion $|\Rm| \sim \frac{1}{t^{1+b}}$ in Theorem \ref{maintheorem}.
\end{remark}

\begin{proof}
  The fact that $v$ lies between our barriers will give us an immediate bound on $|L|$.  As we mentioned when describing the initial data, we will have to use the bound on $|r^2 (L-K)|$ to bound $K$.

  As $v < 1$,
  $$\left| \frac{1-v}{r^2} \right| = \frac{1-v}{r^2} \leq \frac{1-v^-}{r^2}$$
  Outside of the inner region  $\sigma \geq \sigma_*$ by definition, which translates to
  $\frac{1}{r^2} \leq \frac{1}{\sigma_*^2 t^{1+b}}.$
  In particular $|L| \leq C\frac{1}{t^{1+b}}$.
  In the inner region, we have 
  $$\left| \frac{1-v}{r^2} \right| \leq \frac{1-v_{\inner}^-}{r^2} = \frac{1-\bry (k_-\sigma) + (1-\epsilon) k^{-2}_- \cry(k_-\sigma)\theta \theta_t}{r^2}.$$
  which, checking the asymptotics of $\bry$ and $\cry$, gives the bound
  $$|L| = \left| \frac{1-v}{r^2} \right| \leq C \frac{\sigma^2 }{r^2} = C \frac{1}{t^{1+b}}.$$
  Thus, the bound $|L| \leq C\frac{1}{t^{1+b}}$ holds throughout.  The same argument implies the lower bound in the statement of the theorem:
  $$|K|+|L| \geq |L| = \left| \frac{1-v}{r^2} \right| \geq \frac{1-v_{\inner}^+}{r^2} \geq \frac{1}{C}\frac{1}{t^{1+b}}$$

  All that is left is the upper bound on $|K|$.  We apply the bound on $|L-K|$ from the hypotheses:
  $$|K| \leq |L| + \frac{A}{r^2} \leq C \left( \frac{1}{t^{1+b}} + \frac{1}{r^2} \right)$$

  Outside of the inner region, $\frac{1}{r^2}$ is dominated by $\frac{1}{t^{1+b}}$.  Therefore the bound $|K| \leq C \frac{1}{t^{1+b}}$ follows for $\frac{r^2}{t^{1+b}}<\sigma_*^2$.
  
  In the inner region, the rescaling argument from \cite[Lemma~10]{ACK} carries through.  For arbitrary $(\bar r, \bar t)$ in the inner region, consider
  $$w(x, y)
  = \frac{\bar t^{1+b}}{\bar r^2}(1 - v(\bar r + \bar r x, \bar r + \bar r^2 y))
  = \bar \sigma^{-2} (1 - v(\bar r + \bar r x, \bar r + \bar r^2 y)).
  $$
  One checks that $w$ is bounded and solves a strictly parabolic equation on $[-\frac{1}{2}, \frac{1}{2}] \times [-\frac{1}{2}, \frac{1}{2}]$.  (The coefficient of $w_{xx}$ is $(1-\bar \sigma^2w) = v$, which is bounded from below in the inner region.)  Furthermore, $w$ and the ellipticity constant are bounded independently of $(\bar r, \bar t)$.  Applying interior estimates yields $w_x(0,0) < C$ for some constant independent of $(\bar r, \bar t)$, and scaling back we find $v_r(\bar r, \bar t) < C\frac{1}{\bar t^{1+b}}$.
\end{proof}

\begin{lemma}\label{curvaturebndfixedr}
  Let $v(r, t)$ be as in the hypotheses of Lemma \ref{curvaturebndfixedtime}.  Then for any $r_1$ there is $C = C(r_1, r_*, t_*, \delta, \epsilon)$ such that for $r \in (r_1, r_*)$
  $$\sup_{(r, t) \in (r_1, \frac{r_1+r_*}{2}) \times (0, t_*)}   |K| + |L| \leq C.$$
\end{lemma}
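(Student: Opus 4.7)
The plan is to exploit that $r$ is bounded below by the fixed constant $r_1 > 0$ on the domain of interest, which replaces the role played by $t^{-(1+b)/2}$ in Lemma \ref{curvaturebndfixedtime} and reduces the proof to straightforward algebra on the two pointwise hypotheses $|v|<1$ and $|r^2(L-K)| < A$. No rescaling, interior estimate, or decomposition into inner/parabolic/outer subregions is required.

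First I would bound $L$ directly from $|v|<1$. Since
\[
|L(r,t)| \;=\; \frac{|1-v(r,t)|}{r^2} \;\leq\; \frac{2}{r^2},
\]
and $r > r_1$ on the region in question, this immediately yields $|L| \leq 2/r_1^2$. Whereas in Lemma \ref{curvaturebndfixedtime} the $r^{-2}$ factor had to be absorbed against $t^{-(1+b)}$ by invoking the definition of the inner region, here it is already uniformly bounded.

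Second I would transfer this to a bound on $K$ by using the preserved quantity from hypothesis \ref{abnd}. Writing the identity
\[
r^2(L-K) \;=\; (1-v) + \tfrac{r}{2}v_r,
\]
the hypothesis $|r^2(L-K)| < A$ gives $|L - K| \leq A/r^2 \leq A/r_1^2$, so combining with the previous step,
\[
|K(r,t)| \;\leq\; |L(r,t)| + \frac{A}{r_1^2} \;\leq\; \frac{2+A}{r_1^2}.
\]
Adding the two estimates yields the desired bound with $C = (4+A)/r_1^2$, depending only on $r_1$ and $A$ (and in particular consistent with, but stronger than, the dependence claimed in the statement).

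I do not anticipate any obstacle here: the lemma is strictly easier than Lemma \ref{curvaturebndfixedtime} precisely because, away from the tip, the cruder bounds inherited from the initial-data assumptions already do all the work, and neither the barrier comparison $v^- < v < v^+$ nor the parabolic rescaling used to get $v_r$ in the inner region needs to be invoked. Those hypotheses appear in the statement only because the conclusion is meant to be applied to the same $v$ as in the previous lemma.
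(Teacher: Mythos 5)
Your argument is correct as a proof of the stated inequality, and it takes a genuinely different, more elementary route than the paper's. The paper does not use the preserved quantity $r^2(K-L)$ here at all: it observes that on $(r_1/2, r_*)\times(0,t_*)$ the lower barrier gives $v > v^- > 0$, so $v_t = \fop[v]$ is uniformly parabolic with ellipticity constants controlled by $r_1, r_*, t_*, \delta, \epsilon$, and then invokes interior parabolic estimates to bound $v_r$ on the smaller region $(r_1, \tfrac{r_1+r_*}{2})$, after which $K = -v_r/2r$ and $L = (1-v)/r^2$ give the claim. Your route instead reads $|L|\leq 2/r_1^2$ off $|v|<1$ and $|K|\leq|L|+A/r_1^2$ off the hypothesis $|r^2(L-K)|<A$ --- exactly the move the paper itself makes inside the proof of Lemma \ref{curvaturebndfixedtime} --- and this is legitimate because that hypothesis is assumed for the solution at all times, not just at $t=0$. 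Two caveats are worth recording. First, your constant depends on $A$, which is absent from the statement's list $C(r_1,r_*,t_*,\delta,\epsilon)$; since $A$ is fixed by the initial data and Lemma \ref{curvaturebndfixedtime} itself lists $A$ as a dependency, this is cosmetic, and in exchange your bound holds on all of $(r_1,r_*)$ with no need to shrink to the half-interval. Second, and more substantively for the paper's architecture: the parabolic-regularity proof is what Lemma \ref{curvaturebnds} later cites to obtain bounds on all derivatives $|\nabla^p\Rm|$ away from the North Pole, and those higher-order bounds come from the interior estimates, not from the pointwise algebra; so your proof establishes the lemma exactly as stated, but would not by itself replace the regularity input used downstream.
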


\begin{proof}
  On $(r_1/2, r_*) \times (0, t_*)$, $v$ satisfies $v_t = \fop[v]$.  As $v > v^- > 0$ this is a parabolic equation, with the ellipticity constant only dependent on $r_1$, $r_*$, $t_*$, $\delta$, and $\epsilon$.  The boundary data also satisfies bounds dependent only on the same numbers.  Therefore we have an upper bound on $v_r$ in the smaller region $(r_1, \frac{r_1+r_*}{2}) \times (0, t_*)$.  Looking at $K = \frac{- v_r}{2r}$ and $L = \frac{1-v}{r^2}$ we see we have proved the theorem.
\end{proof}

\begin{remark}
  In section \ref{section:regularizations} we want to apply Lemmas \ref{curvaturebndfixedtime} and \ref{curvaturebndfixedr}, but instead of the hypothesis $v^- < v < v^+$ we will have
  $$v^-(r, t + \omega) < v(r, t) < v^+(r, t+\omega).$$
  If we define
  $$ \tilde v^-(r, t) = \sup_{\omega \in [0, \omega_{\max}]}v^-(r, t+\omega)$$
  $$ \tilde v^+(r, t) = \inf_{\omega \in [0, \omega_{\max}]}v^+(r, t+\omega)$$
  then for small enough $\omega_{\max}$, and possibly decreasing $t_*$, the proofs carry through with $v^- < v < v^+$ replaced with $\tilde v^- < v < \tilde v^+$.

\end{remark}

\subsection{Regularizations}\label{section:regularizations}

We construct the smooth evolution from our singular initial data as the limit of a sequence of regularized metrics.  We will describe the construction of the regularized metrics here.

Let $\Nb_{r_0}$ be the connected neighboorhood of the North Pole where $\psi_{\init}(x)<r_0$.  Recall $r_\#$ is defined so that $\psi_s > 0$ if $0 < \psi < 2r_\#$.  For small $\omega \geq 0$, we will construct a smooth rotationally-invariant metric $g_\omega = (ds)^2 + \psi_\omega(s)^2 g_{S^n}$ on $S^{n+1}$.   On $S^{n+1}\setminus\Nb_{\rho_* \sqrt{\omega}}$ we let $g_\omega$ coincide with our initial metric $g_{\init}$.  On $\Nb_{\rho_* \sqrt{\omega}}$ we will still have $\psi_s>0$, so we may use $r$ as a coordinate.  There, $v_\omega = \psi_{\omega,s}^2$ satisfies
$$v^-(r, \omega) < v_\omega(r) < v^+(r, \omega)$$
as well as $|r^2(K-L)| < A$.  (Recall this last condition also holds for our initial data $v_{\init}$, and it is preserved under Ricci flow.)

Lemmas \ref{mod:shorttime} through \ref{curvaturebnds} give properties of $g_\omega$ which are independent of $\omega$.  

\begin{lemma}[Uniform short time existence]
  \label{mod:shorttime}
  There is $t_* > 0$ such that the modified initial metrics $g_\omega$ all have a Ricci flow on the time interval $[0, t_*]$.  The time derivatives $\pars{g_\omega}{t}$ are bounded independently of $\omega$.  On $\Nbr{r_\#}$, $\psi_\omega$ is increasing with respect arclength from the North Pole.
\end{lemma}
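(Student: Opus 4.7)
The plan is to combine classical short-time existence for each fixed positive $\omega$ with the barrier entrapment of Section \ref{sec:barriers} and the curvature bounds of Section \ref{section:curvaturebnds}, using the latter to extend the existence time uniformly. For each $\omega>0$, $g_\omega$ is smooth on the compact manifold $S^{n+1}$, so Hamilton's theorem gives a smooth Ricci flow $g_\omega(t)$ on some maximal interval $[0, T_\omega)$. Since $v_\omega(r,0)\in (v^-(r,\omega),v^+(r,\omega))$ in $\Nbr{\rho_*\sqrt{\omega}}$ and the barriers are bounded away from $0$ and $1$ in a neighborhood of this set, by continuity there is a subinterval on which $\psi_{\omega,s}>0$ throughout $\Nbr{r_\#}$, so we may use $r$ as a coordinate there and work with the PDE $v_t=\fop[v]$.

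On this subinterval I apply Lemma \ref{maxprinc} with the time-shifted barriers $\tilde v^\pm(r,t)=v^\pm(r,t+\omega)$. At $t=0$ the entrapment $\tilde v^-<v_\omega<\tilde v^+$ holds by the construction of $g_\omega$; at the outer edge $r=r_*$ one uses the collar barriers of Lemma \ref{collarlemma} to connect with the region $S^{n+1}\setminus\Nbr{r_*}$, on which $g_\omega\equiv g_{\init}$ is smooth with bounded curvature. The conclusion is that $v_\omega(r,t)$ remains in $(\tilde v^-(r,t),\tilde v^+(r,t))$ for all $t\in [0,T_\omega)\cap[0,t_*]$, where $t_*$ is the minimum of the times for which the barriers of Section \ref{sec:barriers} and the collar barriers are valid. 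In particular $v_\omega$ stays strictly positive, so $\psi_{\omega,s}=\sqrt{v_\omega}>0$ is preserved on $\Nbr{r_\#}$, justifying the use of $r$ as a coordinate throughout.

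Having the entrapment, the remark after Lemma \ref{curvaturebndfixedr} applies with $\omega_{\max}$ replaced by $\omega$, and Lemmas \ref{curvaturebndfixedtime} and \ref{curvaturebndfixedr} then give, for any $r_1>0$, a uniform bound $|K|+|L|\leq C(r_1)$ on $\{r\geq r_1\}\times([0,T_\omega)\cap[0,t_*])$, independent of $\omega$. Near the pole the bound $|K|+|L|\leq C(t+\omega)^{-(1+b)}$ is finite for each fixed $\omega>0$ on all of $[0,t_*]$. Since curvature does not blow up on any closed subinterval of $[0,T_\omega)\cap[0,t_*]$, Hamilton's continuation criterion forces $T_\omega\geq t_*$, giving uniform short-time existence. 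The formula $\partial_t g_\omega=-2\Rc_{g_\omega}$ combined with $\Rc=nK(ds)^2+(K+(n-1)L)\psi^2 g_{S^n}$ then yields the bound on $\partial_t g_\omega$ independent of $\omega$ on compact subsets of $S^{n+1}\setminus\{\NP\}$.

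The main obstacle is carrying the maximum principle argument cleanly across the transition from the barrier region near the pole to the smooth region, where $g_\omega=g_{\init}$; this is precisely what the collar lemma is built for, but one must verify compatibility of the collar with the time-shifted barriers $\tilde v^\pm$ at the matching radius, possibly shrinking $t_*$ a finite number of times. A secondary point is that the curvature estimate near the pole is not uniform in $\omega$ at $t=0$, so the statement ``bounded independently of $\omega$'' should be read as uniform boundedness on compact subsets of $S^{n+1}\setminus\{\NP\}$, which is what the convergence argument in Section \ref{section:convergencefinal} will require.
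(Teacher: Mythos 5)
The paper's own ``proof'' is a one-line citation to Section 2.1 of \cite{ACK}, and the argument there is more elementary than yours: it rests on the two pointwise inequalities that are preserved by the flow and assumed of the (regularized) initial data, namely $|\psi_s|\le 1$ and $|r^2(K-L)|\le A$. From $\Rc = nK(ds)^2 + (K+(n-1)L)\psi^2 g_{S^n}$ one has the identity $\partial_t(\psi^2) = -2r^2(K-L) - 2n(1-\psi_s^2)$, hence $|\partial_t(\psi^2)|\le 2A+2n$ uniformly in $\omega$ and all the way to the tip, with no barriers or curvature estimates involved. This is the uniform time-derivative bound the lemma is asserting; it gives $\psi_\omega^2(x,t)\ge \psi_\omega^2(x,0)-(2A+2n)t$, which is what controls the existence time away from the tip and keeps $\psi_\omega$ increasing on $\Nbr{r_\#}$ for a uniform $t_*$. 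Your route instead derives $\partial_t g=-2\Rc$ bounds from the curvature estimates of Section \ref{section:curvaturebnds}, which forces you to weaken the conclusion to compact subsets of $S^{n+1}\setminus\{\NP\}$; that weakening is unnecessary and discards exactly the uniformity near the pole that the regularization scheme needs. Your route also inverts the paper's logical order: Lemma \ref{mod:shorttime} is stated before, and is used inside, the proof of the barrier-trapping Lemma \ref{mod:barrier}, whereas you prove the two simultaneously by continuation. That is not automatically circular, but it means you are really re-proving Lemma \ref{mod:barrier} here and should say so.

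The concrete gap in your continuation step is the region $S^{n+1}\setminus\Nb_{r_*}$ for $t>0$. Hamilton's criterion requires $\sup_{S^{n+1}}|\Rm|$ to remain finite up to $T_\omega$, and your estimates only cover $\Nb_{r_*}$: Lemma \ref{curvaturebndfixedr} applies inside the barrier region, and the fact that $g_\omega(0)=g_{\init}$ outside $\Nb_{\rho_*\sqrt{\omega}}$ says nothing about $g_\omega(t)$ for $t>0$, in particular not near the South Pole. To close the argument you need the reaction--diffusion inequality $\partial_t|\Rm|^2\le\Delta|\Rm|^2+c_n|\Rm|^3$ on $S^{n+1}\setminus\Nb_{r_*}$ with boundary control supplied by the annulus estimate, exactly as in the proof sketch of Lemma \ref{curvaturebnds}. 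A smaller point: your justification that $\psi_{\omega,s}$ stays positive on $\Nbr{r_\#}$ is slightly circular as written, since $r=\psi$ is only a legitimate coordinate while $\psi_s>0$; the open--closed argument you gesture at does work (the lower barrier gives a quantitative positive lower bound for $v$ on $0<r\le r_\#$), but the cleaner and $\omega$-uniform statement again comes from the $\psi^2$ identity above. With the outer-region curvature bound supplied and the time-derivative claim restored to its uniform form, your argument is a valid, if heavier, alternative to the cited one.
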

Lemma \ref{mod:shorttime} is proven in Section 2.1 of \cite{ACK}.

\begin{lemma}[Barrier trapping]
  \label{mod:barrier}
  There is a $t_* > 0$ such that
  $$
  v_{\epsilon, \delta}^-(r, t+\omega)
  < v_\omega(r, t)
  < v_{\epsilon, \delta}^+(r, t+\omega)
  $$
  for all $(r,t) \in [0, r_*] \times [0, t_*]$.  
\end{lemma}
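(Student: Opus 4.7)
My plan is to apply Lemma~\ref{maxprinc} twice on the rectangle $[0, r_*] \times [0, t_*]$: once with the pair $(v^-(\cdot, \cdot + \omega), v_\omega)$ to get the lower inequality, and once with $(v_\omega, v^+(\cdot, \cdot + \omega))$ to get the upper. The PDE $v_t = \fop[v]$ is autonomous in time, so each time-shifted barrier remains a sub- or super-solution wherever the original was; the smoothness hypothesis of Lemma~\ref{maxprinc} is supplied by $v_\omega$ itself via Lemma~\ref{mod:shorttime}.

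Each application demands ordering on the parabolic boundary of the rectangle. At $r = 0$, smoothness of $g_\omega$ forces $v_\omega(0, t) = 1$, while the identities $\bry(0) = 1$ and $\cry(0) = 0$ (Lemma~\ref{cryexistence}) give $v_\inner^\pm(0, t + \omega) = 1$, so the required inequality holds with equality. At $t = 0$, the strict ordering $v^-(r, \omega) < v_\omega(r, 0) < v^+(r, \omega)$ on $\Nb_{\rho_* \sqrt{\omega}}$ is built into the construction of $g_\omega$. On the annulus $[\rho_* \sqrt{\omega}, r_*]$ one has $v_\omega(r, 0) = v_\init(r) = (1 + o(1)) r^{2b}$, which lies strictly between $v_\out^\pm(r, \omega) = (1 \pm \delta) r^{2b} + O(\omega)$ once $\omega$ is small, and the analogous check in the parabolic/inner sectors is automatic because those sectors lie inside $\Nb_{\rho_* \sqrt{\omega}}$ at time $\omega$.

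The substantive step is the right-edge ordering $v^-(r_*, t + \omega) < v_\omega(r_*, t) < v^+(r_*, t + \omega)$, since the barriers $v^\pm$ encode nothing about the global geometry of $g_\omega$ away from the North Pole. I would bridge this gap with the collar lemma (Lemma~\ref{collarlemma}), applied at some $\bar r \in (r_*, r_\#)$ with constants $m_\pm$ chosen so that
\[
(1 - \delta) r_*^{2b} < m_- < v_\init(r_*) < m_+ < (1 + \delta) r_*^{2b},
\]
which is possible because $v_\init(r_*) = (1 + o(1)) r_*^{2b}$ and $\delta > 0$. By continuity of $v_\init$ and the fact that $g_\omega$ coincides with $g_\init$ on $\Nb_{r_\#} \setminus \Nb_{\rho_* \sqrt{\omega}}$, one can pick $\alpha > 0$ small enough, uniformly for all small $\omega$, so that $v_\col^-(\cdot, 0) \leq v_\omega(\cdot, 0) \leq v_\col^+(\cdot, 0)$ pointwise; the collar barriers are trivially bounded by $0$ and $1$ outside a neighborhood of $\bar r$ and straddle $v_\init(\bar r)$ near $\bar r$. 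A preliminary application of Lemma~\ref{maxprinc} then gives $v_\col^- \leq v_\omega \leq v_\col^+$ for $0 \leq t \leq T(\alpha)$; evaluating at $r = r_*$ and shrinking $\omega_{\max}$ and $t_*$ places the resulting collar window strictly inside the barrier window $(v^-(r_*, t + \omega), v^+(r_*, t + \omega))$, which limits on $((1 - \delta) r_*^{2b}, (1 + \delta) r_*^{2b})$ as $t + \omega \to 0$.

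The hardest part is the parameter bookkeeping in this last step: the constants $m_\pm$, $\alpha$, $\omega_{\max}$, and $t_*$ must be sequenced so that the resulting estimates are not circular, and one must verify that the collar window at $r = r_*$ sits strictly inside the barrier window uniformly for $\omega \in [0, \omega_{\max}]$ and $t \in [0, t_*]$. The strict (rather than weak) inequalities in the conclusion follow from the strict initial ordering and a standard strong maximum principle, since $v_\omega$ is a classical smooth solution not coinciding with either barrier at $t = 0$ on the interior.
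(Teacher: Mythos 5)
Your proposal is correct and takes essentially the same route as the paper: a preliminary application of the maximum principle with the collar barriers of Lemma~\ref{collarlemma} (whose right-edge hypothesis is free because $0 < v_\omega < 1$ is preserved) supplies the ordering at $r = r_*$, after which Lemma~\ref{maxprinc} is applied with the time-shifted barriers $v^\pm(\cdot,\cdot+\omega)$ on $[0,r_*]\times[0,t_*]$. One detail to fix: the collar should be centered at $\bar r = r_*$ itself (as the paper implicitly does), since for $\bar r > r_*$ the quadratic penalty $\left(\frac{r_*-\bar r}{\alpha}\right)^2$ kills the lower collar at $r=r_*$ as $\alpha \to 0$ unless $\bar r - r_*$ is taken small compared to $\alpha$; with $\bar r = r_*$ your choice of $m_\pm$ and the subsequent shrinking of $t_*$ and $\omega$ go through exactly as in the paper.
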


\begin{proof}
Figure \ref{figure:collar} illustrates this argument, which follows Lemma 8 of \cite{ACK}.  Choose the constants $m_{\pm}$ and $\alpha$ in the definitions of $v_{\col}$ from Lemma \ref{collarlemma} so that
  \begin{equation}\label{initcollartrap}
    v^-_{\col}(r, 0) < v_{\init}(r, 0) < v^+_{\col}(r, 0)
  \end{equation}
  for all $r \in (0, r_\#]$, and 
  \begin{equation}\label{rightedgepinch}
  v^-_{\epsilon, \delta} (r_*, 0) < v^-_{\col}(r_*, 0),
  \qquad
  v^+_{\col}(r_*, 0) < v^+_{\epsilon, \delta}(r_*, 0).
  \end{equation}
  We want these inequalities to hold for the $v_\omega$ and the time-shifted barriers.  The first inequality (\ref{initcollartrap}) actually holds for $v_{\init}$ replaced with $v_{\omega}$ (for $\omega$ small enough) since we do not modify $v_{\init}$ near $r_*$, and near 0
  $$v^-_{\col}(r, 0) = 0 < v_{\omega}(r, 0) < 1 = v^+_{\col}(r,0).$$
  Shrink $t_*$ and demand $\omega$ is small enough to turn the second inequality (\ref{rightedgepinch}) into
  $$
  v^-_{\epsilon, \delta}(r_*, t + \omega) < v^-_{\col}(r_*, t),
  \qquad
  v^+_{\col}(r_*, t) < v^+_{\epsilon, \delta}(r_*, t + \omega).
  $$
  for $0 < t < t_0$.
  
  Now we apply the maximum principle (Lemma \ref{maxprinc}) twice.  First, we apply it with the collar sub- and super-solutions. For all $\omega$ we have $|\psi_s| < 1$, and this is preserved under Ricci flow.  Since $0 < v(r, t) < 1$ the condition on the right edge of the domain is satisfied.  Therefore $v_\omega$ stays between the collars, in particular
  $$
  v^-_{\epsilon, \delta} (r_*, t + \omega) < v^-_{\col}(r_*, t)
  < v_{\omega}(r_*, t)
  < v^+_{\col}(r_*, t) < v^+_{\epsilon, \delta}(r_*, t + \omega)
  $$
  which allows us to apply the maximum principle to $v^\pm_{\epsilon, \delta}$, on $[0, r_*] \times [0, t_0]$.
  \begin{figure}[t]
    \includegraphics[scale=.9]{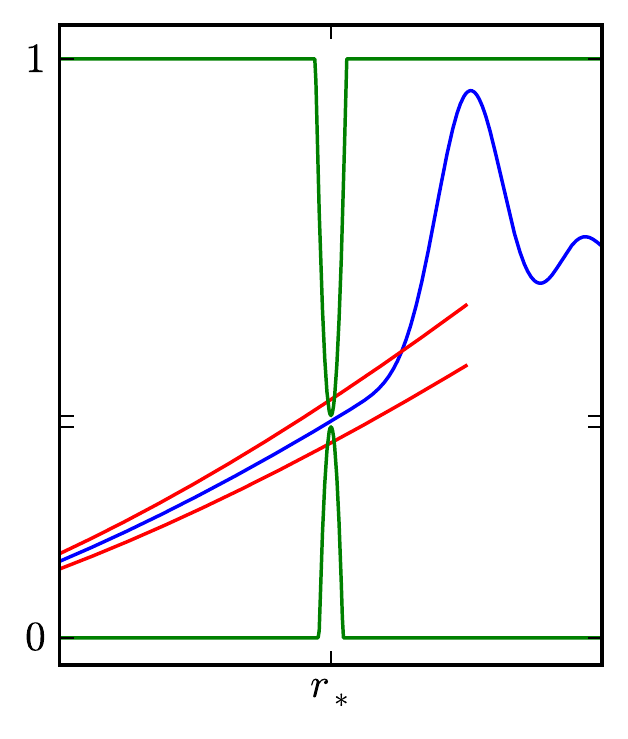}
    \includegraphics[scale=.9]{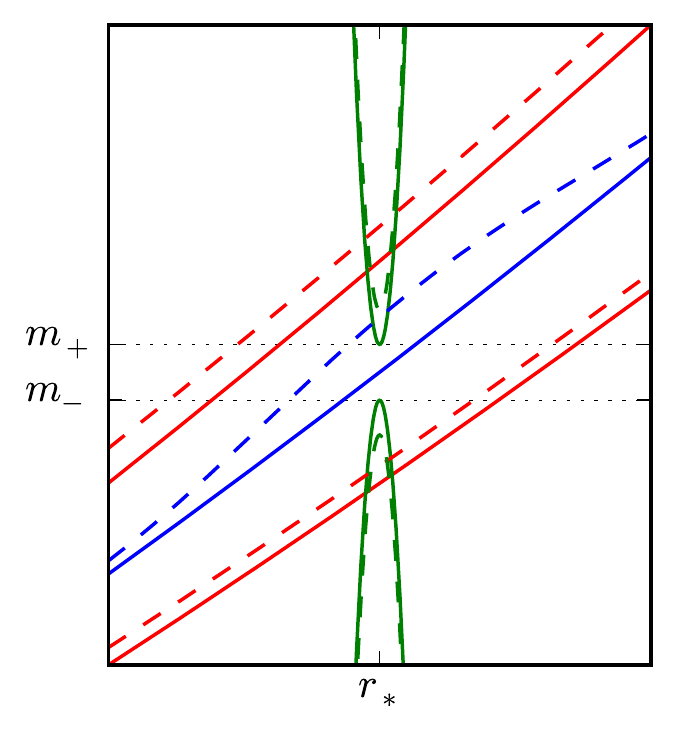}
    \caption{Left: The collars (green), separating $v^\pm(r_*, 0)$ (red) from $v_{\init}(r_*)$ (blue).  Right: A closer look at the initial situation (solid) and the situation after a small time (dashed). The collars are still keeping the solution away from $v^\pm(r_*, t)$.}
    \label{figure:collar}
  \end{figure}
\end{proof}

\begin{lemma}[Bounds away from the singularity]\label{curvaturebnds}
  For fixed $t_1 > 0$ and any $p \in \nats$ (possibly 0) there is $C_p = C_p(p,  \epsilon, \delta)$ such that the regularized solutions satisfy
  $$|\nabla^p \Rm| < \frac{C_p}{t_1^{1+b}}$$
  on $S^{n+1} \times [t_1, t_*]$.

  For fixed $r_1 > 0$ and any $p \in \nats$ (possibly 0) there is $C_p' = C_p'(r_1, p, \epsilon, \delta)$ such that the regularized solutions satisfy
  $$|\nabla^p \Rm| < C_p'$$
  on $(S^{n+1}\setminus\Nb_{r_1}) \times [0, t_*]$.
\end{lemma}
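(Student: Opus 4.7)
The argument proceeds in two stages. First I would establish the $p=0$ bound on $|\Rm|$ itself, by combining the PDE-level estimates of Lemmas~\ref{curvaturebndfixedtime} and~\ref{curvaturebndfixedr} near the North Pole with an interior parabolic regularity argument on the side away from the NP. Once $|\Rm|$ is controlled, Shi's derivative estimates provide the bounds on $|\nabla^p\Rm|$ for all $p \geq 1$ by a mechanical application.

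For the $p=0$ case, $S^{n+1}$ splits naturally into two spatial regions. On the coordinate neighborhood $\Nbr{r_*}$ of the NP, Lemma~\ref{curvaturebndfixedtime}---applied via its concluding remark with the time-shifted barriers $\tilde v^{\pm}$ in place of $v^{\pm}$---gives a uniform-in-$\omega$ bound $|K| + |L| \leq C\, t^{-(1+b)}$ for $t\in (0,t_*]$, and the same remark makes Lemma~\ref{curvaturebndfixedr} yield a uniform constant bound on each annulus $\Nbr{r_*}\setminus \Nbr{r_1}$. For the complementary region $S^{n+1}\setminus \Nbr{r_*}$, which is bounded away from the NP, the initial metric $g_\omega$ coincides with the fixed smooth metric $g_{\init}$ as soon as $\rho_*\sqrt\omega < r_*$. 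Rewriting Ricci flow in arclength as in (\ref{riccievopsi}), the equation for $\psi$ is quasilinear and uniformly parabolic on any region where $\psi$ is bounded away from $0$ (which holds away from NP and SP). The bound $|K|+|L|\leq C(r_1)$ from Lemma~\ref{curvaturebndfixedr}, evaluated on the hypersurface $\{\psi = r_0\}$ for some $0 < r_0 < r_*$, supplies $\omega$-uniform Dirichlet-type boundary control at the interface with the tip region; combined with the fixed smooth initial data outside $\Nbr{r_*}$, classical interior Schauder estimates propagate this into $\omega$-uniform $C^\infty$ bounds on $\psi_\omega$ on compact subsets of the SP side (shrinking $t_*$ if necessary). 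Altogether, $|\Rm|$ is uniformly bounded on $(S^{n+1}\setminus \Nbr{r_1})\times [0, t_*]$ and satisfies $|\Rm|\leq C\, t^{-(1+b)}$ on all of $S^{n+1}\times[t_1, t_*]$.

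For $p \geq 1$, I invoke Shi's derivative estimates. For the first claim, the global version applied on $S^{n+1}\times[t_1/2, t_*]$ with $|\Rm|\leq C\, t_1^{-(1+b)}$ there gives $|\nabla^p \Rm|(x,t) \leq C_p\, t_1^{-(1+b)} (t - t_1/2)^{-p/2}$ for $t > t_1/2$; restricting to $t\in [t_1, t_*]$ absorbs the $(t-t_1/2)^{-p/2}$ factor into a constant depending on $p$, $t_1$, $\epsilon$, $\delta$ (the asserted functional dependence of $C_p$ appearing to be a slightly loose abbreviation). For the second claim, the local Shi estimate on parabolic neighborhoods sitting inside $(S^{n+1}\setminus \Nbr{r_1/2})\times [0, t_*]$, where $|\Rm|$ is already uniformly bounded by the $p=0$ step, delivers the uniform bound $|\nabla^p \Rm| \leq C_p'$ on $(S^{n+1}\setminus \Nbr{r_1})\times [0, t_*]$.

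The main obstacle is the $\omega$-uniform curvature bound on the side away from the NP for the full time interval $[0, t_*]$: one cannot simply invoke global short-time Ricci flow estimates, since the $L^\infty$ curvature of $g_\omega(0)$ over all of $S^{n+1}$ blows up like $\omega^{-(1+b)}$. Rotational symmetry dissolves this difficulty by reducing the problem to the scalar equation (\ref{riccievopsi}), which is uniformly parabolic away from $\psi = 0$; the $\omega$-uniform boundary control provided by Lemma~\ref{curvaturebndfixedr} at an interface well inside the tip region is then exactly what is needed for classical interior parabolic regularity to propagate the fixed, smooth initial data on the SP side forward for a uniform time.
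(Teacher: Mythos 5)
Your overall decomposition (near the North Pole via Lemmas~\ref{curvaturebndfixedtime} and~\ref{curvaturebndfixedr} with the time-shifted barriers, away from the North Pole via the fact that $g_\omega(0)=g_{\init}$ there, then Shi for derivatives) matches the paper's, but two steps do not go through as you state them. The more serious one is the $p\geq 1$ bound on $(S^{n+1}\setminus\Nbr{r_*})\times[0,t_*]$: the \emph{standard} local Shi estimate only yields $|\nabla^p\Rm|\lesssim t^{-p/2}$ on a parabolic neighborhood where $|\Rm|$ is bounded, so a ``mechanical application'' produces bounds that blow up as $t\searrow 0$ rather than the uniform constant $C_p'$ you claim. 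To get bounds uniform down to $t=0$ one must use the modified local Shi estimates (\cite[Appendix]{ModifiedShi}, \cite[14.4.1]{bookanalytic}), which take as additional input the derivative bounds on $\Rm_{g_\omega(0)}=\Rm_{g_{\init}}$ --- available here precisely because the regularization does not alter the metric outside $\Nbr{\rho_*\sqrt{\omega}}$. This is not cosmetic: uniformity at $t=0$ is the entire content of the second claim of the lemma.

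The second issue is the South Pole. Your $p=0$ argument away from the North Pole runs through the scalar equation (\ref{riccievopsi}) for $\psi$, which is uniformly parabolic only where $\psi$ is bounded away from $0$; the region $S^{n+1}\setminus\Nbr{r_1}$ contains $\SP$, where $\psi\to 0$ and the term $(n-1)(1-\psi_s^2)/\psi$ degenerates, so ``interior Schauder on compact subsets of the SP side'' leaves a neighborhood of $\SP$ uncovered. The paper avoids this by arguing tensorially: on $S^{n+1}\setminus\Nbr{r_*}$ it applies the parabolic maximum principle to $\partial_t|\Rm|^2\leq\Delta|\Rm|^2+c_n|\Rm|^3$, with initial data controlled by $g_{\init}$ and boundary control at $\partial\Nbr{r_*}$ supplied by Lemma~\ref{curvaturebndfixedr}; this treats $\SP$ as an ordinary interior point. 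Your interface idea --- using the annulus estimates as boundary control for the region beyond --- is the right one; it just needs to feed a coordinate-free parabolic inequality rather than the $\psi$-equation.
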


\begin{remark}
  The $t^{-(1+b)}$ factor in the bound for positive time is not important, except for the claim in our main theorem that $|\Rm| \sim t^{-(1+b)}$.
\end{remark}

\begin{proof}
  The full proof is Lemma 11 of \cite{ACK}.  Here we only outline where the estimates come from.

  The estimates for $|\Rm|$ come in the following ways:
  \begin{itemize}
  \item The bound on $|\Rm|$ in $\Nb_{r_*} \times [t_1, t_*]$ comes directly from Lemma \ref{curvaturebndfixedtime}.
  \item The bound on $|\Rm|$ in $(\Nb_{r_*} \setminus \Nb_{r_1}) \times [0, t_*]$ comes directly from Lemma \ref{curvaturebndfixedr}.
  \item The bound on $|\Rm|$ in $(S^{n+1} \setminus \Nb_{r_*})$ comes from the parabolic maximum principle applied to
    $$\pars{|\Rm|^2}{t}  \leq \Delta |Rm|^2 + c_n |Rm|^3$$
  \end{itemize}
  Then the estimates for the derivatives of $\Rm$ come in the following ways:
  \begin{itemize}
  \item The bounds in $(\Nb_{r_*} \setminus \Nb_{r_1}) \times [0, t_*]$ come directly from Lemma \ref{curvaturebndfixedr}.
  \item The bounds in $(S^{n+1}\setminus\Nb_{r_*}) \times [0, t_*]$ come from modifications of Shi's local estimates (\cite[Appendix]{ModifiedShi} or \cite[14.4.1]{bookanalytic}).  The modifications allow us to use the fact that we know $g_{\omega}(0) = g_{\init}$ to get bounds on the derivatives of $|\Rm|$ which are uniform in time.
  \item The bounds in $S^{n+1} \times [t_1, t_*]$ come from Shi's global estimates (\cite[Theorem~1.1]{Shi} applied to the metric at $t_1/2$.
  \end{itemize}
\end{proof}

\subsection{Convergence to a Solution}\label{section:convergencefinal}

We expect to prove convergence of the solutions $g_\omega$, on any compact subset of $S^{n+1} \times [0,t_*]$ which does not contain $(\NP, 0)$.

At the present moment, however, showing convergence of the metrics $g_\omega$ near the  point $\NP$ is difficult because we have only defined them up to some diffeomorphisms.  The regularized metric $g_\omega(0)$ coincides with $g_{\init}$ on $S^{n+1} \setminus \Nb_{\rho_*\sqrt{\omega}}$ but in $\Nb_{\rho_*\sqrt{\omega}}$ they are only defined in terms of $v_\omega$.

A metric on $[0, x_{\#}] \times S^n$ of the form
$$g(x, p) = \varphi(x)^2dx^2 + \psi(x)^2g_{S^n}$$
with $\psi'(x)>0$ and $ \psi(0) = 0$ is determined by $v(r) = \frac{\psi'(\psi^{-1}(r))}{\varphi(\psi^{-1}(r))}$, but only up to pullbacks by diffeomorphisms which are invariant under the $SO(n+1)$ action.  Such a diffeomorphism given by
$$\Phi(x, p) = (f(x), p)$$
yields the pulled-back metric
$$
(\Phi^*g)(y, p) = \frac{\varphi(f^{-1}(y))^2}{f'(f^{-1}(y))^2} (dy)^2 + \psi(f^{-1}(y))g_{S^n}
$$

From this we see that by applying such a diffeomorphism we can destroy any bound of the form $c g \leq g_{\omega} \leq Cg$ or $|\nabla_g^pg_{\omega}| \leq C_p$ on $[t_1, t_*] \times S^{n+1}$.  We would like to deal with this problem by using Hamilton's compactness result \cite{H}.  We could apply this to $S^{n+1} \times [t_1, t_*]$ and use a diagonal argument to take $t_1$ to 0.  This would give us a sequence of diffeomorphisms $\Phi_k:S^{n+1} \to S^{n+1}$ so that the pullbacks $\Phi_k^*g_{\omega_k}$ converge in $C^\infty_{\text{loc}}(S^{n+1} \times (0, t_*])$.  Then we would want to show that away from the North Pole, $\Phi_k^*g_{\omega_k}$ converges as $t \searrow 0$ to some pullback of $g_{\init}$.  The issue is that we would have no control on $\Phi_k$.

Instead, we first show that there is a subsequence $g_{\omega_k}$ which converge to $g_{*}$ in $C^{\infty}_{\loc}((S^{n+1}\setminus\NP) \times [0, t_*])$.  This is easy because for any $t_1$ the metrics eventually agree with $g_{\init}$ on $[t_1, t_*]$.  Then we find a sequence of diffeomorphisms $\Phi_k$ so that $\Phi_k^*g_k$ converge in $C^{\infty}_{\loc}(S^{n+1} \times (0, t_*])$, and so that $\Phi_k$ converge to a diffeomorphism $\Phi$.  Then $\Phi_k^*g_k$ converge to $\Phi^*g_{*}$ in $C^{\infty}_{\loc}(S^{n+1} \times [0, t_*] \setminus (\NP, 0))$.  Notice that by then applying $(\Phi^{-1})^*$ we get that $g_k$ converge to $g_*$ in $C^{\infty}_{\loc}(S^{n+1} \times [0, t_*] \setminus (\NP, 0))$.

Section 6.4 of \cite{ACK} accomplishes the same thing as we do here, but instead finds the solution away from the North Pole, then shows that it is smooth by controling the evolution forwards and backwards in time from some $t_1$.

The compactness result we use is Theorem \ref{cptnessthm} below.
\begin{theorem}\label{cptnessthm}
  Let $K$ be a compact subset of a manifold $M$ with background metric $g$.  Let $g_k$ be a collection of solutions to Ricci flow on a neighboorhood of $K \subset M$, on the time interval $[0, t_*]$, and let $t_0 \in [0, t_*]$.  If on $K$
  \begin{enumerate}
  \item There are $0 < c < C < \infty$ such that for all $k$
    $$ cg  \leq g_k(t_0) \leq Cg$$
  \item There are constants $C_p$, $p \geq 1$, such that for all $k$
    $$| \nabla^p_g g_k(t_0) |_g \leq C_p$$
  \item There are constants $C_p'$, $p \geq 0$ such that for all $k$ and $t \in [0, t_*]$,
    $$| \nabla^p_{g_k(t)} \Rm_{g_k(t)} |_{g_k(t)} \leq C_p'$$
  \end{enumerate}
  Then the metrics $g_k$ converge to some metric $g_*$ in $C^\infty(K \times [0, t_*
])$.
\end{theorem}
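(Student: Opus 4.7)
The plan is to follow the framework of Hamilton's compactness theorem \cite{H}, but the setup here is simpler: hypothesis (1) already fixes a common reference metric $g$ at the time $t_0$, so no diffeomorphisms need to be chosen and no pointed convergence is required. The strategy has three steps: (i) use the Ricci flow equation $\partial_t g_k = -2 \Rc_{g_k}$ together with the curvature bound from hypothesis (3) to propagate the bi-Lipschitz control of (1) forward and backward in time from $t_0$; (ii) use the higher-derivative curvature bounds from (3) and the initial $C^p$-bounds from (2) to propagate uniform $C^p$-bounds on $g_k(t)$, with respect to the fixed background $g$, to all of $[0,t_*]$; (iii) pass to a $C^{\infty}$ subsequential limit via Arzel\`a--Ascoli and a diagonal argument.

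Step (i) is essentially Gr\"onwall. For a tangent vector $X$ at a point of $K$,
\begin{equation*}
\left| \frac{d}{dt} g_k(t)(X,X) \right| = 2|\Rc_{g_k(t)}(X,X)| \leq 2 C_0'\, g_k(t)(X,X),
\end{equation*}
so $e^{-2C_0' |t-t_0|} g_k(t_0)(X,X) \leq g_k(t)(X,X) \leq e^{2C_0' |t-t_0|} g_k(t_0)(X,X)$, which combined with (1) gives uniform bi-Lipschitz equivalence of $g_k(t)$ with $g$ throughout $K \times [0,t_*]$, independently of $k$.

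Step (ii) proceeds by induction on $p$. Write $\Gamma_k = \Gamma_{g_k} - \Gamma_g$; its components are polynomial in $g^{-1}$, $g_k^{-1}$ and $\nabla_g g_k$. By induction, bounds on $\nabla_g^j g_k$ for $j \leq p-1$ convert the hypothesis $|\nabla^{p-2}_{g_k(t)} \Rm_{g_k(t)}|_{g_k(t)} \leq C_{p-2}'$ into a bound on $|\nabla_g^{p-2} \Rc_{g_k(t)}|_g$; then the integrated Ricci flow equation
\begin{equation*}
\nabla_g^p g_k(t) = \nabla_g^p g_k(t_0) - 2\int_{t_0}^{t} \nabla_g^{p} \Rc_{g_k(s)}\, ds
\end{equation*}
delivers the desired bound on $|\nabla_g^p g_k(t)|_g$, uniform in $k$ and $t$. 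The main technical work lies in this step: one must carefully track the combinatorial identity that converts $\nabla_{g_k}$-derivatives of a tensor into $\nabla_g$-derivatives via polynomial expressions in $\Gamma_k$ and its $\nabla_g$-derivatives, and verify at each induction step that all quantities appearing in the integrand have already been controlled.

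With step (ii) complete, the family $\{g_k\}$ is uniformly bounded in $C^{p+1}$ in space for each $p$, and the Ricci flow equation together with the spatial curvature bounds gives matching bounds on time derivatives. Arzel\`a--Ascoli then yields, for each $p$, a subsequence converging in $C^p(K \times [0,t_*])$; a standard diagonal argument extracts a further subsequence converging in $C^{\infty}(K \times [0,t_*])$ to a limit metric $g_*$, which by passage to the limit in the Ricci flow equation is itself a smooth solution.
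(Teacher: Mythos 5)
Your proposal is correct in outline and follows essentially the argument the paper itself defers to: the paper's ``proof'' is only a citation to the metric-comparison lemmas at the end of Section 2 of \cite{H}, and your steps (i)--(iii) are precisely that portion of Hamilton's compactness proof (the part that applies after diffeomorphisms have been chosen, which here are unnecessary thanks to hypothesis (1)). One caveat in step (ii): the integrand $\nabla_g^{p}\Rc_{g_k(s)}$ is \emph{not} fully controlled by the induction hypothesis, because rewriting it in terms of $\nabla_{g_k}$-derivatives of curvature produces a term of the schematic form $\nabla_g^{p-1}(\Gamma_{g_k}-\Gamma_g) * \Rc_{g_k}$, which contains the top-order quantity $\nabla_g^{p}g_k$ itself; since that dependence is linear with coefficients already bounded by the induction hypothesis and hypothesis (3), the estimate closes by Gr\"onwall exactly as in step (i), rather than by the direct integration your write-up suggests.
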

The proof of this theorem is contained in the proof of Hamilton's full compactness theorem; it is applied once metrics on a limiting manifold are constructed.  See the end of Section 2 in \cite{H} or Section 2.2.2 of Chapter 1 of \cite{bookgeometric}

\begin{theorem}
A subsequence $g_{\omega_j}$ of the regularized solutions converge to a solution $g_{*}$ of Ricci flow on $C^\infty_{\loc}(S^{n+1} \times [0, t_*] \setminus (\NP, 0))$, with $g_*(0) = g_{\init}$.
\end{theorem}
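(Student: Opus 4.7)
The plan is to split the convergence proof into two parts and glue them: first establish subsequential convergence on $(S^{n+1}\setminus\{\NP\}) \times [0,t_*]$, then establish subsequential convergence on $S^{n+1}\times(0,t_*]$ after pulling back by diffeomorphisms, and finally combine the two.

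For the first part, fix a compact $K \subset S^{n+1}\setminus\{\NP\}$. Since each $g_{\omega_k}(0)$ coincides with $g_{\init}$ outside $\Nb_{\rho_*\sqrt{\omega_k}}$, and this neighborhood shrinks to $\{\NP\}$, we have $g_{\omega_k}(0) = g_{\init}$ on $K$ for $k$ large. The second part of Lemma \ref{curvaturebnds} then gives uniform bounds $|\nabla^p \Rm_{g_{\omega_k}}|\le C_p'$ on $K \times [0,t_*]$. Thus the three hypotheses of Theorem \ref{cptnessthm} are satisfied at $t_0=0$ with background $g_{\init}$, producing a subsequence converging in $C^\infty(K \times [0,t_*])$. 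Diagonalizing over an exhaustion of $S^{n+1}\setminus\{\NP\}$ by such compacta gives a subsequence converging in $C^\infty_\loc((S^{n+1}\setminus\{\NP\}) \times [0,t_*])$ to some Ricci flow $g_*$, with $g_*(0) = g_{\init}$ automatic since the $g_{\omega_k}(0)$ eventually equal $g_{\init}$ on every compact set away from $\NP$.

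For the second part, we use the first part of Lemma \ref{curvaturebnds}, which gives uniform bounds $|\nabla^p \Rm|\le C_p t_1^{-(1+b)}$ on $S^{n+1}\times[t_1,t_*]$ for any $t_1>0$. Choose a basepoint $p_0 \neq \NP$ and, using the already-established convergence at $p_0$, pick orthonormal frames $\xi_k$ for $g_{\omega_k}(t_*)$ at $p_0$ that converge to an orthonormal frame $\xi_\infty$ for $g_*(t_*)$. Hamilton's compactness theorem applied to the pointed framed sequence $(S^{n+1}, g_{\omega_k}, p_0, \xi_k)$ yields, after passing to a further subsequence, diffeomorphisms $\Phi_k: S^{n+1} \to S^{n+1}$ with $\Phi_k(p_0) = p_0$ and $d\Phi_k|_{p_0}$ carrying $\xi_\infty$ to $\xi_k$, such that $\Phi_k^* g_{\omega_k}$ converges in $C^\infty_\loc(S^{n+1}\times(0,t_*])$ to some $\tilde g$.

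Restricted to $(S^{n+1}\setminus\{\NP\}) \times (0,t_*]$ we then have both $g_{\omega_k}\to g_*$ and $\Phi_k^* g_{\omega_k} \to \tilde g$. Because $d\Phi_k|_{p_0}\to\operatorname{Id}$ and the pullbacks inherit uniform $C^k$ estimates, one can extract $\Phi_k \to \Phi$ in $C^\infty_\loc(S^{n+1}\setminus\{\NP\})$ with $\Phi^* g_* = \tilde g$; as $\tilde g$ is smooth across $\NP$, $\Phi$ extends to a diffeomorphism of $S^{n+1}$ fixing $\NP$, and $g_* = (\Phi^{-1})^* \tilde g$ extends smoothly to $\NP$ for each $t>0$. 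The desired convergence $g_{\omega_k} = (\Phi_k^{-1})^*(\Phi_k^* g_{\omega_k}) \to (\Phi^{-1})^* \tilde g = g_*$ then holds in $C^\infty_\loc(S^{n+1} \times [0,t_*] \setminus \{(\NP,0)\})$. The main obstacle is verifying that the $\Phi_k$ themselves converge, not merely their pullbacks; the basepoint-and-frame normalization at $p_0 \neq \NP$, performed at a point where convergence of the metrics is already known, is precisely what forces this.
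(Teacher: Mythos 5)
Your first stage --- subsequential convergence on $(S^{n+1}\setminus\{\NP\})\times[0,t_*]$ via Theorem \ref{cptnessthm} at $t_0=0$, the bounds of Lemma \ref{curvaturebnds}, and a diagonal argument over compacta --- coincides with the paper's argument. The divergence, and the gap, is in the second stage. The paper explicitly identifies the obstruction to a naive use of Hamilton's compactness as ``we would have no control on $\Phi_k$,'' and your proposed resolution --- normalize by a frame at $p_0\neq\NP$ and then ``extract $\Phi_k\to\Phi$'' --- asserts away exactly that obstruction. Each $\Phi_k$ is an isometry from $(S^{n+1},\Phi_k^*g_{\omega_k})$ to $(S^{n+1},g_{\omega_k})$, and the normalization of its $1$-jet at $p_0$ determines it via $\Phi_k=\exp^{g_{\omega_k}}_{p_0}\circ d\Phi_k|_{p_0}\circ(\exp^{\Phi_k^*g_{\omega_k}}_{p_0})^{-1}$; to conclude convergence of $\Phi_k$ on a compact set $K\not\ni\NP$ you need convergence of $\exp^{g_{\omega_k}}_{p_0}$ along geodesics that may pass arbitrarily close to $\NP$, where your first stage gives you nothing. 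Worse, near $\NP$ the metrics $g_\omega$ are only specified through $v_\omega(r)$, i.e.\ only up to radial reparametrization $x\mapsto f(x)$, so ``convergence of $g_{\omega_k}$ near $\NP$ in a fixed gauge'' and ``convergence of $\Phi_k$'' are entangled --- this is precisely the point of the paper's discussion of pullbacks by $\Phi(x,p)=(f(x),p)$. There is also a circularity at the end: you extend $\Phi$ to a diffeomorphism across $\NP$ in order to deduce that $g_*$ is smooth there, but extending $\Phi$ as a diffeomorphism presupposes regularity of the target metric at $\NP$. (This last point is repairable by extending $\Phi$ first as a metric-space isometry of completions and invoking a Myers--Steenrod argument, but it must be said.)

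The paper avoids all of this by never invoking abstract compactness to produce the diffeomorphisms: it builds them by hand from the rotational symmetry, taking $\Phi_k(x,p)=(f_k(x),p)$ with $f_k(x)=\psi_k(x,t_1)$ near the pole and extending smoothly using the stage-one convergence of $\psi_k$ away from $\NP$. In that gauge $\Phi_k^*g_{\omega_k}(t_1)$ has the explicit form $(dy)^2/v_k(y,t_1)+y^2g_{S^n}$, and the uniform lower bounds and derivative bounds on $v_k$ at the fixed positive time $t_1$ supply hypotheses 1 and 2 of Theorem \ref{cptnessthm} directly, while convergence of the $\Phi_k$ themselves is immediate from convergence of $\psi_k$. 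To complete your argument you would need to supply the missing compactness statement for the isometries $\Phi_k$ in a fixed gauge near $\NP$ --- which in practice amounts to reconstructing the paper's explicit choice --- so you should adopt that construction rather than route through pointed framed Hamilton compactness.
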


\begin{proof}
  For any $r_1$, for small enough $\omega$, the regularizations $g_\omega(0)$ coincide with $g_{\init}$ on $S^{n+1}\setminus\Nb_{r_1}$.  By Theorem \ref{cptnessthm} with the bounds from Lemma \ref{curvaturebnds}, a subsequence $\omega_k \searrow 0$ of the regularized flows converge to some metric $g_*$.  Let us call the subsequence $g_k \defeq g_{\omega_k}$, and label any geometric quantities related to $g_k$ with a $k$. By diagonalization we can extend the convergence to $C^{\infty}_{\loc}((S^{n+1}\setminus{\NP}) \times [0, t_*])$.    The limiting metric $g_*$ will be of the form
  $$g_*(x, p) = \varphi_*(x)^2(dx)^2 + \psi_*^2(x)g_{S^n}$$

  Now we describe the diffeomorphisms $\Phi_k:S^{n+1} \to S^{n+1}$ described leading up to the theorem.  We will have $\Phi_k(x, p) = (f_k(x), p)$.  Pick $t_1 \in (0, t_*)$.    Let $x_\# = \psi_{\init}^{-1}(r_{\#})$ so that all of the $\psi_k$ are increasing for $x<x_{\#}$. On $[0,x_\#/2] \times S^{n+1}$ we will have $f_k(x) = \psi_k(x, t_1)$.  We have found that $\psi_k$ converge to $\psi_*$ on $[x_\#/4, x_\#]$ (in particular), so we can extend the $f_k$ to be smooth diffeomorphisms which converge to a smooth limit $f$ on the entire interval $[0, 1]$.  Let $\Phi(x, p) = (f(x), p)$.

  The important thing is that $f_k(x) = \psi_k(x)$ near the north pole, so there
  $$(\Phi^*g_k(t_1))(y, p) = \frac{(dy)^2}{v_{k}(y, t_1)} + y^2g_{S^n}$$
  The functions $v_{\omega_k}$ are bounded from below and have uniformly bounded derivatives.  (In fact, even $K_k = -v_{k}(r,t_1)/2r$ have uniformly bounded derivatives, by Lemma \ref{curvaturebnds}.)  Thus the pullbacks $\Phi_{k}^*g_k(t_1)$ satisfy conditions 1 and 2 of Theorem \ref{cptnessthm}, near the north pole.  Away from the north pole, $\Phi_k^*g_{\omega_k}$ converges in $C^{\infty}$ to $\Phi^*g_*$, so there the conditions are satisfied as well.

  Thus $\Phi_k^*g_{\omega_k}$ converges to $\Phi^*g_*$ on $C^{\infty}_{\loc}(S^{n+1} \times [0, t_*] \setminus (\NP, 0))$.

\end{proof}
\appendix
\section{Notation}
\begin{tabular}{c c c}
  Symbol & Meaning & First use/explanation \\
  \\
  $a$ & $2(n-1)$ & Page \pageref{adef}\\
  $L, K$ & Sectional Curvatures & Section \ref{sec:basiccoordinates} \\
  $v$ & The slope function describing the metric. & Section \ref{sec:basiccoordinates} \\
  $\Nb_{r}$ & A neighborhood of the North Pole where $r < 0$. & Section \ref{sec:basiccoordinates} \\
  $b$ & Number in $(0,1)$ describing  initial data & Page \pageref{bdef} \\
  $v_{\init}$ & The slope function for the initial metric. & \\
  $v_{\out}, v_{\para}, v_{\inner}$ & Formal solutions & Section \ref{sec:formal}\\
  $\kappa_0$ & $a^{-(b+1)/2}$ & Page \pageref{k0def} \\
  $[v]_2$ & $|v| + r|v_r| + r^2|v_{r^2}|$ & Page \pageref{fnrmdef} \\
  $\epsilon, \delta, D, \gamma, \kappa_{\pm}$ & Constants appearing in the barriers & Section \ref{sec:barriers} \\
  $r_*, \rho_*, \sigma_*$ & Constants demarcating the regions & Section \ref{sec:barriers}
\end{tabular}

\bibliographystyle{halpha}
\bibliography{bibliography}

\end{document}